\newcommand{\bexp}{q}
\newcommand{\sexp}{q}
\newcommand{\gexp}{q}
\newcommand{\aex}{n}
\newcommand{\hexp}{\nu}
\newcommand{\holex}{\alpha}
\newcommand{\const}{\mathscr{C}}
\newcommand{\consta}{\mathscr{K}}
\newcommand{\loss}{\SC{L}}
\newcommand{\lfun}{\beta}
\newcommand{\gfun}{\Phi}
\newcommand{\occ}{\Gamma}
\newcommand{\inv}{\pi^{st}}
\newcommand{\mart}{\mathscr{M}}
\newcommand{\aml}{\rm{AMLE}}
\newcommand{\parsp}{\Theta}
\newcommand{\dfpsp}{\mathbb{S}}
\newcommand{\drpsp}{\mathbb{M}}
\newcommand{\mdrft}{A}
\newcommand{\drft}{\mu}
\newcommand{\dffun}{\s}
\newcommand{\diff}{\vas}
\newcommand{\drbd}{J}
\newcommand{\dibd}{J}
\newcommand{\pest}{\hat\drft^\vep_{\aml}}
\newcommand{\para}{\theta}
\newcommand{\apar}{\eta}
\newcommand{\auxp}{\alpha}
\newcommand{\pen}{\g}
\newcommand{\acov}{S}
\newcommand{\eqd}{\stackrel{dist}=}
\newcommand{\ert}{\stackrel{\vep \rt 0}\Rt}
\newcommand{\ewrt}{\stackrel{\vep \rt 0}\RT}
\newcommand{\prt}{\stackrel{\PP_{\para_0}}\Rt}
\numberwithin{equation}{section} 
\title[Asymptotic analysis of estimators of SDEs]{Asymptotic analysis of estimators of ergodic stochastic differential equations}
\author{Arnab Ganguly}
\address{Department of Mathematics, Louisiana State University, USA.}
\email{aganguly@lsu.edu}
\thanks{Research of A. Ganguly is supported in part by NSF DMS - 1855788, NSF DMS - 2246815 and Simons Foundation (via Travel Support for Mathematicians)}
\subjclass[2020]{60F05, 62M05, 60H10, 62F10, 60H35}
\keywords{Maximum likelihood estimation,  parameter inference, stochastic differential equations, diffusion processes, consistency, central limit theorem.}
\begin{document}

\begin{abstract}
The paper studies asymptotic properties of estimators of multidimensional stochastic differential equations driven by Brownian motions from high-frequency discrete data. Consistency and central limit properties of a class of estimators of the diffusion parameter and an approximate maximum likelihood estimator of the drift parameter based on a discretized likelihood function have been established in a suitable scaling regime involving  the time-gap between the observations and the overall time span. Our framework is more general than that typically considered in the literature and, thus, has the potential to be applicable to a wider range of stochastic models.

\end{abstract}
\date{\today}

\maketitle

\setcounter{equation}{0}
\renewcommand {\theequation}{\arabic{section}.\arabic{equation}}
\section{Introduction.}\label{intro}
The temporal dynamics of a variety of systems arising from systems biology, environmental science, engineering, physics, medicine can be be effectively modeled using Stochastic Differential Equations (SDEs) \cite{VK92, Wilk06, CWG09, DoSa13, AnKu15, GaAl15}.  In modern financial mathematics, SDEs play a key role in modeling short-term interest rates, asset pricing, option pricing, and their associated volatilities \cite{Sun00} . Understanding behaviors of these systems requires not just building mathematical models but integrating it with available data. Developing such well calibrated models hinges on the precise estimation of the underlying parameters in SDEs. The primary goal of this paper is to develop limit theorems which can be used to assess the accuracy of a class of such estimators for these parameters in a general framework.

Consider a $d$-dimensional parametric SDE driven by  a $d$-dimensional Brownian motion $W$, 
\begin{align}\label{eq:SDE0}
	X(t) &= x_0 + \int_0^t b(\drft, X(s))ds+  \int_0^t \dffun(\diff,X(s)) dW(s), \qquad x_0 \in \R^d,
\end{align}	
where the parameter $\para \dfeq (\drft, \diff) \in \parsp \dfeq \drpsp\times\dfpsp \subset \R^{n_{0}}\times \R^{n_{1}\times n_{2}}$.   We assume that for each $\para = (\drft, \diff) \in \Theta$, $X$ is ergodic with unique stationary (invariant) distribution $\inv_\theta$. We are interested in the consistency and central limit properties of of an estimator $\hat \para = (\hat \drft, \hat \diff)$ of the parameter $\para$. 

Although computational methods for parametric estimation is well studied at least in the case of It\^o diffusions (see \cite{Chib01, Era01, RoSt01, DuGa02, GoWi05, Bis08, GoWi08, ArOp11, DoSa13, CsOp13, BlSo14, FPR08, SuGa16, WGBS17} for a limited list), the accompanying theoretical results on consistency and central limit properties of the estimators  are relatively less explored in the literature. The books \cite{Kut03, Bis08} are standard references on various limiting properties of estimator of the drift parameter based for one-dimensional It\^o diffusion based on continuous data. While \cite{Bis08} considers the case of one-dimensional drift parameter only for additive SDEs,  \cite{Kut03} considers the more general case of multidimensional parameter estimation for multiplicative one-dimensional SDEs with known diffusion coefficient. Proofs of consistency of maximum likelihood estimator (MLE) of the drift parameter for one-dimensional  ergodic diffusions with known diffusion coefficient can also be found in \cite{Van01, WeSh16}; the second paper also proved asymptotic normality of the MLE (also see \cite{Van05}). 

These works focus on estimators of drift parameter based on a continuous trajectory of $X$. Of course, in most real cases, a continuous realization of $X$ is not possible, and data is available in a discretized form $\BX_{\tilde t_0:\tilde t_m} \dfeq (X(\tilde t_0), X(\tilde t_1), X(\tilde t_2), \hdots, X(\tilde t_m))$ at observation time points $\{\tilde t_i:i=0,1,2,\hdots,m\}$ over an interval $[0,T]$. The conventional idea is that if the data is high frequency in nature, that is, when $\tilde \Delta = \tilde t_i-\tilde t_{i-1}$ is small, the error due to discretization of the of the continuous path-based estimator, which in some cases is of the form  $\hat \drft_T = F_T(T^{-1}\int_0^{T} f(X(s)) dX(s), T^{-1}\int_0^T g(X(s)) ds)$, should be small.  However the effect of the time gap $\tilde\Delta$ on the error is much more subtle, and the resulting error analysis has to be done carefully. To understand the issues associated with naive discretization, notice that  $\tilde\Delta$ enters  the expression of   $\hat\drft^{\text{Disc}}_T$, the discretized version of $\hat\drft_T$,  in a nonlinear way (for instance, $F_{T}$ in most cases is nonlinear). Thus even if the discretization errors for approximating the integrals like $T^{-1}\int_0^T f(X(s)) dX(s)$ over a fixed interval $[0,T]$  are $O(\tilde\Delta)$, the precise order of error in terms of  $\Delta$ is for $\hat\drft^{\text{Disc}}_T$ remains unclear. The second important point is that the constant in $O(\tilde\Delta)$ bound above grows (typically, exponentially) with $T$. Thus as $T\rt \infty$, the error bound for a {\em fixed time-gap} $\tilde\Delta$ in general goes to infinity! This holds even if $X$ is ergodic with stationary distribution $\pi$, and $\f{1}{T}\int_0^T g(X(s)) ds$ has a finite limit $\int g(x) \pi(dx)$. This issue has been previously highlighted by the author in \cite{GaSu21} in the context of approximation of stationary distributions of SDEs (also see \cite{Tal90, GrTa13}).  This means that even when the abstract estimator $\hat\drft_T$ is consistent and converges to to the true value $\drft_0$ as $T\rt \infty$, its computable version $\hat\drft^{\text{Disc}}_T$ can potentially  diverge, rendering such estimators practically useless!

It is also important to realize that continuous-path-based estimators are difficult to interpret from a computational perspective. To see this, consider  the MLE of the parameter $\drft$ obtained by maximizing the likelihood function, which when the diffusion function $\dffun$ is known (that is, the diffusion parameter $\diff$ in \eqref{eq:SDE0} is known),  is given by
\begin{align}\label{eq:like-0}
L(\drft) = \exp \lf\{\int_0^T  a^{-1}(\diff,X(t))b(\drft, X(t)) \cdot dX(t) - \f{1}{2} \int_0^T b^T(\drft, X(t))a^{-1}(\diff,X(t))b(\drft, X(t))dt\ri\},
\end{align}
where $a(\diff,x) \dfeq \dffun(\diff, x)\dffun^T(\diff, x)$. 
Note that \eqref{eq:like-0} is an abstract expression and cannot be used for computation purposes in most cases. Even if data is available as an entire continuous path / realization, $X(\om) = x \in C([0,T])$, (where $\om$ is explicitly shown to emphasize that the path is a particular realization of the process $X$), neither the estimator (in most scenarios) nor the likelihood function $L(\cdot)$ can be computed for any given value of $\drft$ using \eqref{eq:like-0}. This is due to the fact that the stochastic integral is not defined  ``$\omega$ by $\omega$'', that is,   $\lf(\int_{0}^T f(X(s)) dX(s)\ri) (\om) \neq \int_{0}^T f(X(\om)(s)) dX(\om)(s)$; the right side is not even defined! Furthermore, note that if $\hat \drft_T = \hat \drft (X_{[0,T]})$ is an estimator of $\drft$, plugging in $\hat \drft_T$ in $L(\cdot)$ is problematic as the stochastic integral $\int_0^T a(X_t)^{-1}b(\hat \drft_T, X(t)) \cdot dX(t)$ is not defined (in the usual It\^o sense). This is because $\hat \drft_T$ depends on the whole path $X_{[0,T]}$ making the integrand not adapted to the underlying filtration $\{\salg_t\}$. Unfortunately, these crucial facts are sometimes overlooked resulting in serious gaps in a few existing papers; for example,  \cite{WeSh16} erroneously claimed martingale property of integrals like $\int_0^tf(\hat\para_T, X(s))dW(s)$. In the case of one-dimensional diffusions, a workaround exists by re-expressing the likelihood without the stochastic integral (see Kutoyants \cite{Kut03}). But this technique does not work in general for multidimensional SDEs.

This highlights the necessity of using discretization to obtain a computable likelihood for inference of $\theta$ even when a continuous path from the SDE is available! But as mentioned naive discretization using a fixed $\tilde\Delta$ can lead to unreliable estimators. This suggests that the asymptotic analysis needs to be done in a proper scaling regime involving $T$ and $\tilde\Delta$.
Quite a few papers in the literature have considered asymptotics of MLE under Euler type discretization of the SDEs \cite{BLSR87, Flo89, Yos92, Kes97, BLSR99}. \cite{Flo89, Kes97}  consider the case of one-dimensional SDEs. \cite{BLSR87,  BLSR99}  considered multidimensional with constant diffusion coefficient, this was improved in \cite{Yos92} which considered diffusion coefficient of the form $\dffun(\diff, x) = \dffun_0(x)\diff$.  The assumptions in these studies are quite restrictive, including conditions such as compactness of the parameter space, linear growth and Lipschitz continuity of the drift coefficients (with growth and Lipschitz constants independent of the parameter), strong differentiability conditions of the coefficients with respect to the parameter, etc. 

The goal of this paper is to study the limiting properties of  estimators of $\para=(\drft,\diff)$ for high-frequency data in a much more general framework. We consider a multidimensional parametric It\^o SDE of the form \eqref{eq:SDE0} with general drift and diffusion coefficient.  Under the assumption that a consistent estimator $\hat \diff^\vep$ of $\diff$ is available, we proved consistency and asymptotic normality of an approximate MLE, $\hat \drft^\vep_{\aml}$, of the drift parameter $\drft$ in an appropriate scaling regime involving time-gap $\tilde \Delta$ and time span $T=\vep^{-1}$ (c.f. Theorem \ref{th:const-AMLE-0} and Theorem \ref{th:aml-clt-0}). Next, we considered two specific forms of the diffusion function $\dffun(\diff,\cdot)$, defined a suitable estimator $\hat \diff^\vep$ and established its consistency and central limit asymptotics (c.f. Theorem \ref{th:const-diff-0} and Theorem \ref{th:clt-diff-0}). The conditions assumed in the paper for these results are much more general than those typically found  in the literature. In particular, the coefficients are required to satisfy only H\"older type continuity conditions in $x$ and the parameter argument while also simultaneously accommodating polynomial-type growth with respect to a Lypaunov function $V$, that ensures ergodicity (see Condition \ref{cond:SDE} and Condition \ref{cond:SDE-coeff} for details). Consequently, our findings apply to a considerably wider class of SDEs than those addressed in existing works. Future endeavor will address the case of sparse and noisy datasets. 

Before we describe the structure of the paper, we note that there are several additional works on parameter estimation for It\^o diffusion processes from discrete data, aside from those already mentioned, that employ alternative methods and lie outside the primary focus of this paper. A partial list includes include score function and estimating function based approach of \cite{BiSo95, KeSo99, Sor00},  different approximations of transition density function \cite{Saha02, BaJu05, ChCh11}, likelihood based inference using exact simulation of diffusions \cite{BPRF06}. These papers primarily focus on one-dimensional SDEs, and the methods are not easily implementable for multidimensional SDEs. A\"it-Sahalia in \cite{Saha08} extended his prior work on Hermite expansion based density approximation method to a certain class of multidimensional class of SDEs which can be converted to SDEs with constant diffusion coefficient via  Lamperti transform. 

\np
{\em Outline:} The layout of the rest of the paper is as follows. The mathematical framework and the required conditions are described in Section \ref{sec:math-frame}. Section \ref{sec:main-results} presents the main results of the paper, while Section \ref{sec:aux-results} contains some auxiliary results needed for their proofs. The proofs of the main results are provided in Section \ref{sec:main-proofs}. Finally Appendix \ref{sec:appendix} collects some additional supplementary materials.

\np
{\it Notations:} \textbullet\ $\R^{m\times n}$ denotes the space of all $m\times n$ matrices; $\R^{n\times n}_{\geq 0}, $ and $\R^{n\times n}_{>0}$ respectively denote the space of all $n\times n$ positive semi-definite (p.s.d.) and postive definite (p.d.) matrices. All p.s.d. matrices are assumed to be symmetric. \textbullet\ For a  p.s.d. matrix $A$, the square root, $A^{1/2}$, denotes the unique p.s.d matrix satisfying $A^{1/2}A^{1/2} = A$. For a square matrix $A$, $A_{\mathrm{sym}} \dfeq (A+A^T)/2.$ \textbullet\ $\ve_{m\times n}(A) \in \R^{mn}$ will denote the vectorization of an $m\times n$ matrix $A$.  \textbullet\ $C(\mathbb A, \R^d)$ denotes the space of continuous functions from $\mathbb A \subset \R^n \rt \R^d$ topologized by uniform convergence over compact sets.
 \textbullet\ For a differentiable function $f:\R^d \rt \R^{d_1}$, the $d\times d_1$ matrix $Df$, defined by $(Df)_{i,j} = \partial_j f_i$, denotes the first-derivative or Jacobian matrix. \textbullet\ For a function $f:\mathbb \R^n \rt \R^d$, 
 $$\argmin\{f(u): u\in \mathbb A\} \dfeq \{u_*: f(u_*) \leq f(u), \ u \in \mathbb A\}.$$
\textbullet\ $\SC{M}^+(\mathbb E)$ and $\SC{M}^+_1(\mathbb E)$ respectively denote the space of all non-negative measures and probability measures on the measurable space $(\mathbb E, \SC{E})$ equipped with topology of weak convergence.   \textbullet\ Convergence in probability with respect to a probability measure $\PP$ is denoted by $\cdot \stackrel{\PP}\Rt \cdot$. \textbullet\ $X \eqd X'$ means that the two random variables $X$ and $X'$ have the same distribution. \textbullet\ $\No_d(\mfk{m},\Sigma)$ denotes the normal distribution on $\R^d$ with mean vector $\mfk{m}$ and $d\times d$ covariance matrix $\Sigma$. \textbullet\ $\leb$ denotes the Lebesgue measure.

\section{Mathematical Framework}\label{sec:math-frame}
The standard approach to estimate the parameter $\apar \in \mathbb{H}$ of a stochastic model  is by solving the minimization problem $\min_{\apar \in \mathbb{H}} \loss^\vep(\apar)$ for a suitable (random) loss function $\loss^\vep: \mathbb{H} \rt \R$ and setting $\hat \apar^\vep = \argmin_{\apar \in \mathbb{H}} \loss^\vep(\apar).$ This of course requires the inherent assumption that a global minimum exists. The loss function certainly depends on the data from the model, which in case of the model \eqref{eq:SDE0}, is the time series data, $\BX_{\tilde t_0:\tilde t_m}$ ($t_m=\vep^{-1}$) from  interval $[0,\vep^{-1}]$. In other words, $\loss^\vep(\apar) \equiv \loss^\vep(\apar |\BX_{\tilde t_0:\tilde t_m})$, but for convenience we will often suppress the data part from the notation and simply write $\loss^\vep(\apar)$. A common choice of the loss function is the negative log-likelihood function of the parameter (when available), which results in the maximum likelihood estimate (MLE) of $\apar$. 

$\hat \apar^\vep$ is a consistent estimator of the parameter $\apar$ if $\hat \apar^\vep \stackrel{\PP_{\apar_0}} \Rt \apar_0$ for every $\apar_0$ (that is, when the true value of the parameter is $\apar_0$).

\subsection{Estimation of the drift parameter through approximate likelihood function}\label{sec:drift-est}
We work on a probability space  $(\Omega, \SC{F}, \PP)$ with $\PP$ belonging to a parametric family of probability measures $\lf\{\PP_{\para}: \para =(\drft, \diff) \in \parsp = \drpsp\times\dfpsp \ri\}$, and under $\PP_{\para}$, $X$ satisfies the SDE  \eqref{eq:SDE0}. We now describe a natural approach to estimation of $\para = (\drft, \diff)$.

\np
{\em Case of known diffusion parameter:} When the diffusion parameter, $\diff$, is known, the unknown parameter $\para$ to be estimated is just the drift parameter $\drft$.

 Let $\BX_{\tilde t_0:\tilde t_m} \dfeq (X(\tilde t_0), X(\tilde t_1), \hdots, X(\tilde t_m))$ be the discrete data from the interval $[0,\vep^{-1}]$. We assume that $\tilde t_m=\vep^{-1}$. 
A Riemann sum-based discretization of \eqref{eq:like-0} gives the discretized log-likelihood function of $\drft$, as $\ell^{m,\tilde\Delta}(\drft|\diff, \BX_{\tilde t_0:\tilde t_m})$, where for  fixed $\bx_{0:m} =(x_0,x_1,x_2,\hdots,x_m) \in \R^{d\times {(m+1)}}$, $\diff \in \dfpsp$, $0\leq \delta \leq 1$ the function $\ell^{m,\delta}(\cdot|\diff, \bx_{0:m}):\R^{n_0} \rt \R$  is defined as 
\begin{align}\label{eq:disc-like-0}
\begin{aligned}
\ell^{m,\delta}_{D}(\drft |\diff, \bx_{0:m})=&\ \sum_{i=1}^m a^{-1}(\diff,x_{i-1})b(\drft, x_{i-1})\cdot \lf(x_i-x_{i-1})\ri) \\
& \hs{.2cm}- \f{\delta }{2} \sum_{i=1}^m  b^T(\drft, x_{i-1})a^{-1}(\diff,x_{i-1})b(\drft, x_{i-1}).\\
\end{aligned}
\end{align}
%
Since our asymptotics will feature $\vep \rt 0$ and the time-gap, $\tilde\Delta$, will be scaled with inverse of the time span, $\vep$, it is convenient to denote $\ell^{m, \tilde\Delta}_D(\drft|\diff, \BX_{\tilde t_0:\tilde t_m})$ by $\ell^\vep_D(\drft|\diff, \BX_{\tilde t_0:\tilde t_m})$ for $m=(\tilde\Delta\vep)^{-1}$. We are tacitly assuming here that $(\tilde\Delta\vep)^{-1}$ is an integer, which of course does not result in any loss of generality. 

Note that $\ell^\vep_D(\drft|\diff, \BX_{\tilde t_0:\tilde t_m})\equiv \ell^{m=(\tilde\Delta\vep)^{-1}, \tilde\Delta}_D(\drft|\diff, \BX_{\tilde t_0:\tilde t_m}) $ has the following  equivalent integral representation which is often more convenient to work with for the purpose of analysis.
\begin{align}\label{eq:disc-like-int-0}
\begin{aligned}
 \ell^\vep_{D}(\drft |\diff, \BX_{\tilde t_0:\tilde t_m}) =&\ \int_0^{\vep^{-1}}   a^{-1}(\diff,X(\tilde{\vr_\vep}(s)))b(\drft, X(\tilde{\vr_\vep}(s)))\cdot dX(s)\\
& \hs{.2cm}  - \f{1 }{2} \int_0^{\vep^{-1}} b^T(\drft, X(\tilde{\vr_\vep}(s)))a^{-1}(\diff,X(\tilde{\vr_\vep}(s)))b(\drft, X(\tilde{\vr_\vep}(s))) ds,
\end{aligned}
\end{align}
where the step function $\tilde\vr_\vep$ corresponding to the partition $\{\tilde t_i\}$ is defined as $\tilde\vr_\vep(s) = \tilde t_i$ if $\tilde t_i \leq s < \tilde t_{i+1}.$

 Taking  the (discretized) negative log-likelihood, $- \ell^\vep_{D}(\drft |\diff, \BX_{\tilde t_0:\tilde t_m})$ (c.f. \eqref{eq:disc-like-0}) or equivalently its scaled version, $- \vep \ell^\vep_{D}(\drft |\diff, \BX_{\tilde t_0:\tilde t_m})$ as the loss function gives an (approximate) MLE of the drift parameter, $\drft$.  If $\drft$ is in a high-dimensional space, it is more appropriate to consider a penalized or regularized version of this loss function with 
\begin{align}\label{eq:pen-like}
\loss^\vep_D(\drft) \dfeq -\vep \ell^\vep_{D}(\drft |\diff, \BX_{\tilde t_0:\tilde t_m}) + \pen^\vep(\drft)
\end{align}
and define $\hat\drft^\vep_{\aml}$ as $\dst\hat\drft^\vep_{\aml} \dfeq \argmin_{\drft \in \drpsp} \lf\{-\vep\ell^\vep_{D}(\drft)+ \pen^\vep(\drft)\ri\}.$
A common choice of the penalty function involving $L^p$-norm of $\drft$ is of the form, $\pen^\vep(\drft) = \vep^{\alpha}  \|\drft\|^{p'}_p, \ \alpha >0, \ p,p'\geq 0.$ 

\np
{\em Case of unknown diffusion parameter:} When the diffusion parameter, $\diff$, is unknown, two diffusions corresponding to two different values of $\diff$ are singular; consequently, Girsanov's theorem cannot be used to get a likelihood function for $\drft$. A natural approach for the estimation problem in this case involves an approximate maximum likelihood estimation of the drift parameter, $\drft$, by minimizing the (penalized) negative of an approximate likelihood function, $\ell^\vep_A$, obtained by replacing the $\diff$ parameter with a consistent estimator, $\hat \diff^\vep$, in $ \ell^\vep_{D}(\drft |\diff, \BX_{\tilde t_0:\tilde t_m})$ (see \eqref{eq:disc-like-0}, or equivalently, \eqref{eq:disc-like-int-0}).

In other words, let $\hat\diff^\vep$ be a consistent estimator of $\diff$, that is, $\hat\diff^\vep \prt \diff_0$ for any $\para_0=(\drft_0, \diff_0)$. We assume that there exists a (deterministic) measurable function $F^\vep:\R^{d\times (m+1)} \rt \R^{n_1}$ such that $\hat\diff^\vep = F^\vep(\BX_{\tilde t_0:\tilde t_m})$. 

Consider the estimator $\hat\drft^\vep_{\aml}$ defined by
\begin{align}\label{eq:est-pen}
\hat\drft^\vep_{\aml} \in &\ \argmin_{\drft \in \drpsp} \lf\{-\vep\ell^\vep_{A}(\drft)+ \pen^\vep(\drft)\ri\}
\end{align}
where $\ell^\vep_{A}(\drft) \equiv\ \ell^\vep_{A}(\drft |\BX_{\tilde t_0:\tilde t_m}) \dfeq \  \ell^\vep_{D}(\drft |\hat\diff^\vep,\BX_{\tilde t_0:\tilde t_m})$, that is, 
\begin{align}\label{eq:approx-like-int-0}
\begin{aligned}
\ell^\vep_{A}(\drft) \dfeq &\
 \int_0^{\vep^{-1}}  a^{-1}(\hat\diff^\vep,X(\tilde{\vr_\vep}(s)))b(\drft, X(\tilde{\vr_\vep}(s)))\cdot dX(s)\\
&\ \hs{.2cm}  - \f{1 }{2} \int_0^{\vep^{-1}} b^T(\drft, X(\tilde{\vr_\vep}(s)))a^{-1}(\hat\diff^\vep,X(\tilde{\vr_\vep}(s)))b(\drft, X(\tilde{\vr_\vep}(s))) ds.
\end{aligned}
\end{align}
Notice that there are no adaptability issues in plugging in $\hat\diff^\vep$ in place of $\diff$ in the `stochastic integral' term, as here it is just a finite sum.


Suppose the mappings $\drft \rt \pen^\vep(\drft)$ and $\drft \rt b(\drft,x)$  for each $x$ are differentiable, then  $\ell^\vep_{A}$ is differentiable in $\drft$.  Consequently, if $\drpsp$ is open and convex, $\hat\drft^\vep_{\aml}$ is a critical point of the loss function, $\loss^\vep_A(\cdot) \dfeq -\vep\ell^\vep(\cdot)+\pen^\vep(\cdot)$, that is, the solution of the {\em penalized likelihood equation}, 
\begin{align}\label{eq:lik-crit}
0=\nabla_{\drft}\loss^\vep_A(\drft) \equiv - \vep \nabla_{\drft}\ell^\vep_{A}(\drft)+\nabla_\drft \pen^\vep(\drft),
\end{align}
where 
\begin{align}\label{eq:deriv-like-0}
\begin{aligned}
\nabla_{\drft}\ell^{\vep}_A(\drft) =&\ \int_0^{\vep^{-1}}D^T_{\drft} b(\drft, X(\tilde\vr_\vep(s)))a^{-1}(\hat\diff^\vep,X(\tilde{\vr_\vep}(s))) dX(s)\\
	& \hs{.2cm} -\int_0^{\vep^{-1}} D^T_{\drft} b(\drft, X(\tilde\vr_\vep(s))) a^{-1}(\hat\diff^\vep,X(\tilde{\vr_\vep}(s))) b(\drft, X(\tilde\vr_\vep(s)))\ ds.
\end{aligned}
\end{align}
The above expression is obtained by simply
 interchanging differentiation and the integral in \eqref{eq:approx-like-int-0}. This is allowed without any extra condition as the integral in  \eqref{eq:approx-like-int-0} is simply a finite sum. 



\subsection{Estimation of the diffusion parameter}\label{sec:diff-est}
One approach to estimate the diffusion parameter $\diff$ is via the matrix-valued quadratic variation, which satisfies $\mqd{X}_t =  \int_0^t a(\diff,X(s))ds$, where recall that $a(\diff,x) = \dffun(\diff,x)\dffun^T(\diff,x)$. Thus going to their discretized versions we define the estimator $\hat\diff^\vep$ through the equation
\begin{align}\label{eq:diff-est-def} 
\mqd{X}^{D,\tilde\vr_\vep}_{\vep^{-1}} \stackrel{set}=  \int_0^{\vep^{-1}} a(\hat\diff^\vep,X(s))ds
 \end{align}
 where $\mqd{X}^{D,\tilde\vr_\vep}$ is the discretized quadratic variation process of $X$ according to the partition $\{\tilde t_i\}$ or equivalently its corresponding step function $\tilde\vr_\vep$, that is,
 \begin{align}\label{eq:quad-disc-X}
 \mqd{X}^{D,\tilde\vr_\vep}_t \dfeq \sum_{i=0}^{[t/\tilde\Delta]-1} (X(\tilde t_{i+1}) - X(\tilde t_i))(X(\tilde t_{i+1}) - X(\tilde t_i))^T.
 \end{align}
 
We consider two  particular forms of the diffusion parameter $\dffun$ in this paper with the parameter space $\dfpsp = \R^{d\times d}_{>0}$:

\np
\textbullet\ {\em Form 1}: $\dffun$ is of the form $\dffun(\diff,x) = \lf(a_0(x)\diff\ri)^{1/2}$ or $\lf(\diff a_0(x)\ri)^{1/2}$, where $\diff \in \R^{d\times d}_{>0}$ is the unknown parameter, $a_0: \R^d \rt \R^{d\times d}_{>0}$ is a  known function. Recall for a p.d. matrix $A$, $A^{1/2}$ denotes the unique p.d. square root of $A$. 

The treatments of both these types are essentially the same. For specificity, we focus on the case, $\dffun(\diff,x) = \lf(a_0(x)\diff\ri)^{1/2}$ for which $a(\diff,x) = a_0(x)\diff $  leading to the estimator $\hat\diff^\vep \equiv \hat\diff^\vep(\BX_{\tilde t_0:\tilde t_m}) $ defined by
\begin{align}\label{eq:diff-est-1}
\hat\diff^\vep \dfeq \lf(\int_0^{\vep^{-1}} a_0(X(\tilde\vr_\vep(s)))ds\ri)^{-1}\mqd{X}^{D,\tilde\vr_\vep}_{\vep^{-1}}.
\end{align}

\np
\textbullet\ {\em Form 2}: $\dffun(\diff,x) = \dffun_0(x)\kappa$, where the parameter $\kappa \in \R^{d\times n_1}$ is such that $\diff = \kappa\kappa^T \in \R^{d\times d}_{>0}$ and $\dffun_0: \R^d \rt \R^{d\times d}_{>0}$ is a  known function. Note for a fixed $\diff$, any $\kappa$ satisfying $\diff=\kappa\kappa^T$ lead to distributionally equal SDEs, and thus the unknown parameter to be estimated in this case is $\diff$. Here
$a(\diff,x) = \dffun_0(x)\diff \dffun^T_0(x)$, and \eqref{eq:diff-est-def} leads to the following estimator defining equation
\begin{align*}
\mqd{X}^{D,\tilde\vr_\vep}_{\vep^{-1}} =  \int_0^{\vep^{-1}} \dffun_0(X(\tilde\vr_\vep(s)))\hat\diff^\vep\dffun^T_0(X(\tilde\vr_\vep(s)))ds.
\end{align*}
Using the $\ve_{d\times d}$ operator we get the estimator $\hat\diff^\vep \equiv \hat\diff^\vep(\BX_{\tilde t_0:\tilde t_m})$ as
\begin{align}\label{eq:diff-est-2}
\ve_{d\times d}\lf(\hat \diff^\vep\ri) \dfeq \lf(\int_0^{\vep^{-1}} \dffun_0(X(\tilde\vr_\vep(s))) \ot \dffun_0(X(\tilde\vr_\vep(s)))ds\ri)^{-1} \ve_{d\times d} \lf(\mqd{X}^{D,\tilde\vr_\vep}_{\vep^{-1}}\ri).
\end{align}

\subsection{Assumptions}
We collect all the conditions required on the SDE \eqref{eq:SDE0}. Not every condition will be needed for each result presented in the paper. We will note the specific ones that are required for each individual result. We will assume that $b:\R^{n_{0}}\times \R^d \rt \R^d$ and $\dffun:\R^{n_1\times n_2}\times \R^d \rt \R^{d\times d}$. This allows us to plug in estimators $\hat\drft^\vep$ and $\hat\diff^\vep$ in $b(\cdot,\cdot)$ and $\dffun(\cdot,\cdot)$ even when $(\hat\drft^\vep, \hat\diff^\vep)$ lie outside $\parsp = \drpsp\times \dfpsp$. 

Now, the various assumptions we will impose only concern $b\Big|_{\drpsp\times \R^d}$ and $\dffun\Big|_{\dfpsp\times \R^d}$, the restrictions of $b$ and $\dffun$ to $\drpsp\times \R^d$ and $\dfpsp\times \R^d$, respectively. Throughout, we will assume that $a(\diff,x) = \dffun(\diff,x)\dffun(\diff,x)^T$ is p.d. for each $\para=(\drft,\diff) \in \parsp = \drpsp\times \dfpsp$ and $x$ in the state-space of the SDE $X$. 

The following form of Lyapunov condition is needed to establish ergodicty of the process $X^\vep$ and related integral moment like bounds.
\begin{condition} \label{cond:SDE}  There exists a Lyapunov function $V: \R^d \rt [1,\infty)$ such that for every $\para = (\drft,\diff) \in \drpsp\times\dfpsp$,
	\begin{enumerate}[label=(\roman*), ref=(\roman*)] 
           \item \label{cond:item:growth-rec}   $\dst\beta_*(\theta) \equiv -\limsup_{\|x\| \rt \infty}\SC{L}_\theta V(x)>0;$

	\item \label{cond:item:growth-lyap-a} $\nabla^TV(x)a(\diff, x)\nabla V(x) = o(V|\SC{L}_\theta V|)$ as $\|x\| \rt \infty$, that is, 
	$$ \f{\nabla^TV(x)a(\diff,x)\nabla V(x)}{V|\SC{L}_\theta V|} \ \stackrel{\|x\| \rt \infty}\Rt 0;$$
	
	\item \label{cond:item:growth-lyap-b}	for some exponent $\gexp_{V,0}$ and constant $J_{V,0}>0$ such that
	$$\max\lf\{\|\nabla V(x)\|, \|D^2 V(x)\|\ri\}  \leq J_{V,0}  V(x)^{\gexp_{V,0}};$$ 	

\item \label{cond:item:growth-gen-convex}	for some exponent $\gexp_{V,1}$ and constant $J_{V,1}>0$ such that for any $0\leq u\leq 1$
	$$  V((1-u)x+uy) \leq J_{V,1}  \lf(V(x)^{\gexp_{V,1}}+V(y)^{\gexp_{V,1}}\ri), \quad x,y \in \R^d.$$ 	

	\end{enumerate}
\end{condition}
Notice the requirement $V(x) \geq 1$ is without loss of generality in the sense that if $\bar V$ is a non-negative function satisfying the above condition, then so does $V \equiv 1+\bar V$.
	
\begin{remark}[Existence of stationary distribution]\label{rem:exst-stat-dist}
{\rm
Condition \ref{cond:SDE}:\ref{cond:item:growth-rec} \& \ref{cond:item:growth-lyap-a} and positive definiteness of the $a=\s\s^T$ matrix imply that for every $\para_0=(\drft_0,\diff_0) \in \parsp$, the process $X$ admits a unique stationary distribution, $\inv_{\para_0}$ (see Lemma \ref{lem:tight-occ} for details).
 }   
\end{remark}

Of course, Condition \ref{cond:SDE}:\ref{cond:item:growth-gen-convex} holds if $V$ is quasiconvex, but, in general, it holds for a much bigger class of functions.

\begin{remark}\label{rem:SDE-cond} {\rm
Condition \ref{cond:SDE} implies the following:
\begin{enumerate}[label=(\alph*), ref=(\alph*)]
\item \label{rem:SDE-gen} There exists a constant $R_0(\para)$ such that
\begin{align*}
\SC{L}_\para V(x) \leq -\beta_*(\para)/2, \quad \text{ for all } \|x\| \geq R_0(\para).
\end{align*}
In particular, $\SC{L}_\para V(x) <0$ for $\|x\| \geq R_0(\para)$.

\item \label{rem:SDE-diff} For every constant $K>0$, there exists a constant $R_K(\para) \geq R_0(\para)$ such that
\begin{align*}
\nabla^TV(x)a(\diff,x)\nabla V(x) \leq \f{1}{K} V|\SC{L}_\para V| =- \f{1}{K} V \SC{L}_\para V, \quad \text{ for all } \|x\| \geq R_K(\para).
\end{align*}
\end{enumerate}
}
\end{remark}

\begin{example}\label{exam:rec-cond-b}{\rm 
A large class of ergodic SDEs with drift and diffusion coefficients $b$ and $\dffun$  satisfy the following conditions:
\begin{enumerate}[label=(\roman*)]
\item for some nonnegative constants $\mfk{c}_0(\drft), \bexp_0$ and $B_0(\drft)$, $\<x, b(\drft,x)\> \leq - \mfk{c}_0(\drft)\|x\|^{\bexp_0}$ for all $\|x\| \geq B_0(\drft)$.
\item  for some constants $\mfk{c}_1(\diff) \geq 0$ and $\sexp_{\dffun,0}$, $\|\dffun(\diff,x)\| \leq \mfk{c}_1(\diff)(1+\|x\|)^{\sexp_{\dffun,0}}$  with the parameter $\para=(\drft, \diff)$ such that $\sexp_{\dffun,0} < \f{1}{2}\bexp_0$. 
\end{enumerate}

Here the function $V$, defined by $V(x) =1+\f{1}{2}\|x\|^2$, is a Lypaunov function satisfying Condition \ref{cond:SDE}. Now of course it is clear from the choice of $V$ that Condition \ref{cond:SDE}:\ref{cond:item:growth-lyap-b} \& \ref{cond:item:growth-gen-convex} hold. For the other two conditions, notice that since $\sexp_{\dffun,0}< \f{1}{2}\bexp_0$,   $\SC{L}_\para V(x) = \<x, b(\drft,x)\> + \f{1}{2} \|\dffun(\diff,x)\|^2$ satisfies
\begin{align*}
\SC{L}_\theta V(x)  \leq -\f{1}{2} \mfk{c}_{2}(\theta)\|x\|^{\bexp_0}, \quad \|x\| \geq B_1(\theta)
\end{align*}
for some constant $\mfk{c}_{2}(\theta)$ and sufficiently large $B_1(\para)$. In particular, $\SC{L}_\theta V(x) \rt -\infty$ as $\|x\|\rt \infty$ and thus Condition \ref{cond:SDE}:\ref{cond:item:growth-rec} holds. Furthermore, for $\|x\| \geq B_1(\para)$ and some constant $\mfk{c}_3(\para) \geq 0$,
\begin{align*}
\f{\nabla^TV(x)a(\diff,x)\nabla V(x)}{V|\SC{L}_\theta V|} = \f{\nabla^TV(x)a(\diff,x)\nabla V(x)}{-V \SC{L}_\theta V} \leq \mfk{c}_3(\para)\f{(1+\|x\|)^{\sexp_{\dffun,0}+2}}{\|x\|^{\bexp_0+2}} \stackrel{\|x\| \rt \infty}\Rt 0,
\end{align*}	
and thus Condition \ref{cond:SDE}:\ref{cond:item:growth-lyap-a} also holds.

}
\end{example}

Before we list the various growth and regularity conditions needed on the drift and diffusion functions, $b$ and $\dffun$, we first define a local H\"older type condition on parameters that we often need the functions to satisfy.

\begin{definition}\label{def:Holder}
Let $\B,\mathbb{U}$ be normed spaces,  $F:\B \times \R^d \rt \mathbb{U}$, $V: \R^d \rt [1,\infty)$ measurable functions. The mapping $(\apar,x) \in \B \times \R^d \Rt F(\apar,x) \in \mathbb{U}$ satisfies a {\bf local H\"older-type condition in $\apar$} with growth function $V$, if for every compact set $\cmpt \subset \B$, there exist constant $\consta_{\cmpt}$, H\"older exponent $0\leq \hexp_{F,0} \leq 1$ and growth exponent $\gexp_{F,0}\geq 0$ (each potentially depending on $\cmpt$) such that
\begin{align*}
\|F(\apar_1, x) - F(\apar_2,x)\|_{\mathbb{U}} \leq \consta_{\cmpt} V(x)^{\gexp_{F,0}}\|\apar_1-\apar_2\|^{\hexp_{F,0}}_{\B}, \quad \apar_1, \apar_2 \in \cmpt.
\end{align*}
\end{definition}

Recall that a function $f: \mathbb A \subset \R^{m} \rt \R^n$ is {\em locally bounded} if for any compact set $\mathbb A_0 \subset \mathbb A$, $ \sup\limits_{x \in \mathbb A_0}\|f(x)\| < \infty.$
\begin{condition} \label{cond:SDE-coeff}  The drift and the diffusion coefficients $b$ and $\dffun$ satisfy the following:
\begin{enumerate}[label=(\roman*), ref=(\roman*), leftmargin=*]
\item \label{cond:b-growth} for some locally bounded  functions $\drbd_{b,0}, \drbd_{b,1}, \drbd'_{b,0} : \drpsp \rt [0,\infty),$ and exponents (not depending on $\drft$), $\bexp_{b,0}, \bexp_{b,1}, \bexp'_{b} \geq 0$ and $0<\holex_b \leq 1$, 
\begin{align*}
&\ \hypertarget{subitem-a}{(a)}\ \|b(\drft, x)\| \leq \drbd_{b,0}(\drft) V(x)^{\bexp_{b,0}},\quad \hypertarget{subitem-b-lip}{(b)}\   \|b(\drft, x) - b(\drft, x') \| \leq \drbd_{b,1}(\drft) \lf(V(x)^{\bexp_{b,1}} + V(x')^{\bexp_{b,1}}\ri)\|x-x'\|^{\holex_b}\\
&\ \hypertarget{subitem-c}{(c)}\ \|D_{\drft}b(\drft, x)\| \leq \drbd'_{b,0}(\drft) V(x)^{\bexp'_{b,0}}
\end{align*}

\item \label{cond:b-lip-para} the mapping $(\mu, x) \in \drpsp \times \R^d \Rt b(\drft,x) \in \R^d$ satisfies a local H\"older-type condition in $\drft$ with growth function $V$ in the sense of Definition \ref{def:Holder}.


\item \label{cond:Db-lip-para} the mapping $(\mu, x) \in \drpsp \times \R^d \Rt D_{\drft}b(\drft,x) \in \R^{d\times n_0}$ satisfies a local H\"older-type condition in $\drft$ with growth function $V$ in the sense of Definition \ref{def:Holder}.


\item \label{cond:sig-growth} for some locally bounded  functions $\dibd_{\dffun,0}, \dibd_{\dffun,1}, \dibd_{a,0}, \dibd'_{a,0}, \dibd'_{a,1}: \dfpsp \rt [0,\infty)$ and exponents (not depending on $\diff$), $\sexp_{\dffun,0}, \sexp_{\dffun,1}, \sexp_{a,0}, \sexp'_{a,0}, \sexp'_{a,1} \geq 0$ and $0<\holex_{\dffun}, \holex'_{a} \leq 1$,
\begin{align*}
&\ \hypertarget{subitem-s-a}{(a)}\ \|\dffun(\diff,x)\| \leq \dibd_{\dffun,0}(\diff)V(x)^{\sexp_{\dffun,0}},\ \ 
 \hypertarget{subitem-s-b}{(b)}\  \|\dffun(\diff,x) - \dffun(\diff,x')\|  \leq \dibd_{\dffun,1}(\diff)\lf(V(x)^{\sexp_{\dffun,1}} + V(x)^{\sexp_{\dffun,1}}\ri) \|x-x'\|^{\holex_{\dffun}};\\
 &\ \hypertarget{subitem-s-c}{(c)}\   \|a^{-1}(\diff,x)\| \leq \dibd_{a,0}(\diff)V(x)^{\sexp_{a,0}}, \quad  \hypertarget{subitem-s-d}{(d)}\   \|D_x a(\diff,x)\| \leq \dibd'_{a,0}(\diff)V(x)^{\sexp'_{a,0}}, \\ 
 &\ \hypertarget{subitem-s-e}{(e)}\  \|D_x a(\diff,x) - D_x a(\diff,x')\|  \leq \dibd'_{a,1}(\diff)\lf(V(x)^{\sexp'_{a,1}} + V(x)^{\sexp'_{a,1}}\ri) \|x-x'\|^{\holex'_{a}};
 \end{align*}
\item \label{cond:diff-inv-lip-para} 
the mapping $(\diff, x) \in \dfpsp \times \R^d \Rt a^{-1}(\diff,x) \in \R^{d\times d}$ satisfies a local H\"older-type condition in $\diff$ with growth function $V$ in the sense of Definition \ref{def:Holder}.
 \end{enumerate}
\end{condition}

\begin{remark} {\rm
Note that in Condition \ref{cond:SDE-coeff} above, \ref{cond:b-growth}-\hyperlink{subitem-c}{(c)} implies local Lipschitz-type continuity of the mapping $(\drft,x) \rt b(\drft,x)$ in $\drft$ and thus implies \ref{cond:b-lip-para}. The reason we have both listed here is because they are used in two separate results --- the  weaker assumption of local H\"older continuity of $b(\cdot,x)$ in \ref{cond:b-lip-para} is enough for the consistency of the approximate MLE estimator,  $\hat\drft^\vep_{\aml}$, while the stronger  \ref{cond:b-growth}-\hyperlink{subitem-c}{(c)} is needed to prove the associated CLT in a more general setting. Similarly, \ref{cond:sig-growth}-\hyperlink{subitem-s-d}{(d)} implies Lipschitz-type continuity of the mapping $(\diff,x) \rt a(\drft,x)$ in $x$.
}
\end{remark}

\section{Limit theorems for estimators}\label{sec:main-results}

\subsection{General result on consistency}
We first establish the following result on convergence of minimas, which leads to a general result on consistency of estimators. 
\begin{theorem} \label{th:const-gen}
Let $(\Omega, \SC{F}, \PP)$ be a probability space, and $\{\loss^\vep, \vep>0\}$ a family of stochastic processes (random fields) with path space, $C(\mathbb A, \R)$, where $\mathbb A \subset \R^{d_0}$ is measurable. 
Assume the following conditions hold.
\begin{enumerate}[label=(\roman*)]
\item  for each $\vep>0$, $\loss^\vep(\cdot)$ admits a global minimum;
\item there exists a deterministic function $\loss^0: \mathbb A \rt [0,\infty)$ such that $\loss^\vep(\cdot) \stackrel{\PP}\Rt \loss^0(\cdot)$ in $C(\mathbb A, [0,\infty))$ as $\vep \rt 0$;
\item $\loss^0(\cdot)$ has a unique (global) minimum at $\auxp=\auxp_*$.
\end{enumerate}
Let $\{\hat{\auxp}^\vep\}$ be a family of random variables such that for each $\vep>0$, 
\begin{align}\label{eq:aux-par-est}
\hat \auxp^\vep \in \argmin\lf\{\loss^\vep(\alpha): \alpha \in \mathbb A\ri\}.
\end{align}
Assume that one of the following conditions hold.
\begin{enumerate}[label=(\roman*)]
\setcounter{enumi}{3}
\item Either
\begin{enumerate}[label=(\alph*), ref=\theenumi{}-(\alph*)]
\item \label{cond:est-tight}
 $\mathbb A$ is closed and $\{\hat{\auxp}^\vep\}$  is tight; or,
 \item \label{cond:parsp} $\mathbb A$ is open, convex, and for each $\vep$ and $\om$, the mapping $\auxp \rt \loss^\vep(\auxp, \om)$ is differentiable, and the equation $\nabla_\auxp \loss^\vep(\auxp, \om)=0$ has a unique solution. 
\end{enumerate}
\end{enumerate}
Then  $\hat{\auxp}^\vep \stackrel{\PP}\Rt \auxp_*$ as $\vep \rt 0$.
 
\end{theorem}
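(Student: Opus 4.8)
The plan is to establish the convergence $\hat\auxp^\vep \stackrel{\PP}\Rt \auxp_*$ by a standard argmax/argmin-continuity argument, splitting according to which of the two alternative hypotheses (iv) holds. The backbone in both cases is the uniform (locally uniform) convergence $\loss^\vep \stackrel{\PP}\Rt \loss^0$ in $C(\mathbb A, \R)$ together with the well-separation of the minimizer of the limit $\loss^0$, which follows from (iii) once we localize to a compact set. By the Skorokhod representation theorem (applicable since $C(\mathbb A,\R)$ with the compact-open topology is Polish when $\mathbb A$ is $\sigma$-compact), it suffices to prove the deterministic statement: if $\loss^\vep \to \loss^0$ uniformly on compacts and $\hat\auxp^\vep \in \argmin \loss^\vep$, then $\hat\auxp^\vep \to \auxp_*$, provided the relevant side condition holds. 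Alternatively one argues directly in probability; I will keep the Skorokhod route in mind to streamline the exposition.

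First I would handle case \ref{cond:est-tight}. Fix $\delta>0$ and let $B_\delta = \{\alpha \in \mathbb A: \|\alpha - \auxp_*\| \le \delta\}$. By tightness of $\{\hat\auxp^\vep\}$, choose a compact $\mathbb K \subset \mathbb A$ with $\PP(\hat\auxp^\vep \in \mathbb K) > 1-\eta$ uniformly in $\vep$; since $\mathbb A$ is closed, $\mathbb K \setminus B_\delta^\circ$ is compact, and by uniqueness of the minimizer (iii), $\kappa \dfeq \inf_{\alpha \in \mathbb K \setminus B_\delta^\circ} \loss^0(\alpha) - \loss^0(\auxp_*) > 0$ (using continuity of $\loss^0$ and compactness, with the convention $\inf \emptyset = +\infty$). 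On the event $\{\sup_{\alpha \in \mathbb K} |\loss^\vep(\alpha) - \loss^0(\alpha)| < \kappa/3\} \cap \{\hat\auxp^\vep \in \mathbb K\}$, if $\hat\auxp^\vep \notin B_\delta$ then $\loss^\vep(\hat\auxp^\vep) \ge \loss^0(\hat\auxp^\vep) - \kappa/3 \ge \loss^0(\auxp_*) + 2\kappa/3 > \loss^0(\auxp_*) + \kappa/3 \ge \loss^\vep(\auxp_*)$, contradicting $\hat\auxp^\vep \in \argmin \loss^\vep$ (note $\auxp_* \in \mathbb A$ so $\loss^\vep(\auxp_*)$ is a competitor). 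Hence on this event $\hat\auxp^\vep \in B_\delta$, and by the uniform convergence hypothesis together with the tightness bound, the probability of this event tends to $1$ as $\vep \to 0$ (after letting $\eta \to 0$). This gives $\PP(\|\hat\auxp^\vep - \auxp_*\| > \delta) \to 0$.

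Next I would handle case \ref{cond:parsp}, where tightness is not assumed but uniqueness of the critical point is. The key observation is that when $\mathbb A$ is open and convex and $\alpha \mapsto \loss^\vep(\alpha,\om)$ is differentiable with a global minimum, that minimum is an interior critical point, hence by uniqueness $\hat\auxp^\vep$ is the \emph{unique} zero of $\nabla_\alpha \loss^\vep$ and the unique global minimizer. The subtlety is the lack of a priori compactness, so I cannot immediately invoke a compact set capturing $\hat\auxp^\vep$. I would instead argue locally: fix a closed ball $\bar B \subset \mathbb A$ centered at $\auxp_*$ of radius $\delta$ (shrinking $\delta$ so $\bar B \subset \mathbb A$, possible since $\mathbb A$ is open), and show that with probability tending to $1$, $\loss^\vep$ restricted to $\bar B$ attains its minimum over $\bar B$ in the interior, at a point within $B_{\delta/2}$ of $\auxp_*$; this uses uniform convergence on the compact $\bar B$ and (iii) exactly as above (with $\kappa = \inf_{\|\alpha-\auxp_*\|=\delta}\loss^0(\alpha) - \loss^0(\auxp_*) > 0$ controlling the boundary sphere, so the min over $\bar B$ stays strictly inside). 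An interior minimizer of $\loss^\vep|_{\bar B}$ is a critical point of $\loss^\vep$ on all of $\mathbb A$; by the uniqueness-of-critical-point hypothesis it must coincide with the global minimizer $\hat\auxp^\vep$. Therefore $\hat\auxp^\vep \in B_{\delta/2} \subset B_\delta$ on an event of probability tending to $1$, which is the desired conclusion.

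The main obstacle is case \ref{cond:parsp}: without tightness one must leverage the openness/convexity and the uniqueness of the critical point to "promote" a local minimizer on a compact ball to \emph{the} global minimizer, and one has to be careful that the interior minimum over the ball is genuinely interior (not on the boundary sphere) — this is where well-separation of $\auxp_*$ enters. A secondary technical point is the measurability and applicability of the Skorokhod representation (or, if one avoids it, phrasing every estimate directly in terms of $\PP$-probabilities of the uniform-deviation events), and ensuring $\mathbb A$ is $\sigma$-compact so that the path space $C(\mathbb A,\R)$ is Polish; since $\mathbb A \subset \R^{d_0}$ this is automatic.
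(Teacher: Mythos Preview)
Your proposal is correct and follows essentially the same approach as the paper's proof. In both cases you use exactly the same mechanism: for \ref{cond:est-tight} a tightness-plus-well-separation argmin argument on a compact set, and for \ref{cond:parsp} the observation that an interior minimizer of $\loss^\vep$ over a small closed ball around $\auxp_*$ must be a critical point and hence, by uniqueness, coincides with the global minimizer $\hat\auxp^\vep$. The paper phrases this via the event $\SC{D}^{r,R}_\vep=\{\loss^\vep(\auxp)-\loss^\vep(\auxp_*)\ge \bar\loss^{\auxp_*,r,R}_*/2 \text{ on the annulus}\}$ and argues directly in probability (no Skorokhod), but the substance is identical.

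Two minor comments: your claim that the minimizer over $\bar B$ lies in $B_{\delta/2}$ is stronger than what you actually need (interior suffices, and that is all your boundary-sphere separation gives); and your side remark that $\mathbb A\subset\R^{d_0}$ measurable is automatically $\sigma$-compact is not quite right, though since you ultimately argue directly with $\PP$-events rather than via Skorokhod this is harmless.
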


\begin{remark}{\rm
Clearly, the  Conditions \ref{cond:est-tight} and \ref{cond:parsp} of Theorem \ref{th:const-gen} are not mutually exclusive, and tightness of $\{\hat{\auxp}^\vep\}$ also holds under Condition \ref{cond:parsp}. Now in some cases tightness of $\{\hat{\auxp}^\vep\}$ can be directly verified. The simplest instance of this is when the  space $\mathbb A$ is already compact. In other cases the form of the minimizer, $\hat{\auxp}^\vep$, might already be known from which tightness can be derived by standard moment-based estimates. Interestingly, we will show in the proof of Theorem \ref{th:const-gen}  that under condition \ref{cond:parsp}  the convergence of $\hat{\auxp}^\vep$ can be directly established without invoking tightness.}
\end{remark}

\np
{\em Connection to consistency:} For applications to consistency, consider a stochastic model defined on the probability space $(\Omega, \SC{F}, \PP)$ with $\PP$ belonging to a parametric family of probability measures $\{\PP_{\alpha}: \auxp \in \mathbb A \}$. In this context, for each $\vep>0$, $\loss^\vep:\mathbb A \subset \R^{d_0} \rt [0,\infty]$  denotes a suitable (random) loss function of the parameter $\auxp \in \mathbb A $ of the model depending on the data of ``size" measured in terms of $\vep^{-1}$. Thus $ \hat\auxp^\vep$, in this case, is an estimate of the parameter $\auxp$ of the model based on the  minimization of the loss function $\loss^\vep$.

Suppose that $\loss^\vep$ converges to a deterministic function, $\loss^0_{\auxp_0}: \mathbb A \rt \R$ when the true value of the parameter is $\auxp_0$, that is, for every $\auxp_0 \in \mathbb A$, $\loss^\vep(\cdot) \stackrel{\PP_{\auxp_0}}\Rt \loss^0_{\auxp_0}(\cdot)$ in $C(\mathbb A, \R)$. Assume that for each $\auxp_0$, $\loss^0_{\auxp_0}$ has a unique minimum at $\auxp_0$. Then Theorem \ref{th:const-gen} shows that under condition \ref{cond:est-tight} or \ref{cond:parsp}, $\hat\auxp^\vep$ is consistent, that is, it converges to the true parameter $\auxp_0$ (under $\PP_{\auxp_0}).$ The limiting function $\loss^0_{\auxp_0}$ can be thought of as an ideal loss function for the model corresponding to the true parameter $\auxp_0$ that is minimized only at $\auxp_0$. Obviously, $\loss^0_{\auxp_0}$ is not computable as it depends on the unknown parameter $\auxp_0$.

\subsection{Consistency and CLT of drift estimator} We will apply Theorem \ref{th:const-gen} to prove consistency of the estimator of the drift parameter, $\hat \drft^\vep_{\aml}$ (c.f \eqref{eq:est-pen}) of general SDE model \eqref{eq:SDE0}, under the assumption that a consistent estimator $\hat \diff^\vep$ of the parameter $\diff$ is available. We then discuss the asymptotics of $\hat \diff^\vep$ given by \eqref{eq:diff-est-1} and \eqref{eq:diff-est-1} for the two specific forms of diffusion function $\dffun$  discussed in Section \ref{sec:diff-est}.


 
 

\begin{theorem}[Consistency of the approximate maximum likelihood estimator] \label{th:const-AMLE-0}
Let $X$ be the solution to the SDE, \eqref{eq:SDE0}. For each $\vep>0$, let $\BX_{\tilde t_0:\tilde t_m} = (X(\tilde t_0), X(\tilde t_1), X(\tilde t_2), \hdots, X(\tilde t_m))$ be data from $X$ from the interval $[t_0=0, \tilde t_m \equiv \vep^{-1}]$ at the observation time points $\{\tilde t_i:i=0,1,2,\hdots,m\}$ with $\tilde \Delta \equiv \tilde\Delta(\vep) \dfeq \tilde t_i - \tilde t_{i-1}$ denoting the time gap between two successive observations. Assume that either the drift parameter space, $\dfpsp$, is open or  $\dfpsp$ is closed and each $\hat \diff^\vep \equiv \hat \diff^\vep(\BX_{\tilde t_0:\tilde t_m})$ takes values in $\dfpsp$.  Further assume that the following conditions hold.

\begin{enumerate}[label=(\Alph*), ref=(\Alph*)]
\item \label{cond:MLE-SDE} Condition \ref{cond:SDE}: \ref{cond:item:growth-rec} - \ref{cond:item:growth-lyap-b} and Condition \ref{cond:SDE-coeff}: \ref{cond:b-growth}-\hyperlink{subitem-a}{(a)},\ref{cond:b-growth}-\hyperlink{subitem-b}{(b)}, \ref{cond:b-lip-para}, \ref{cond:sig-growth}-\hyperlink{subitem-s-a}{(a)}, \ref{cond:sig-growth}-\hyperlink{subitem-s-b}{(b)}, \ref{cond:sig-growth}-\hyperlink{subitem-s-c}{(c)}  \& \ref{cond:diff-inv-lip-para};
\item \label{cond:MLE-diff-est} for each $\para_0 =(\drft_0,\diff_0)$, $\hat \diff^\vep \equiv \hat \diff^\vep(\BX_{\tilde t_0: \tilde t_m}) $ is consistent;
\item \label{cond:MLE-lik-unique} for each $\para_0$, $\bar \L_{\para_0}: \Theta \rt \R$ defined by \begin{align} \label{eq:cen-lik-lim}
\bar \L_{\para_0}(\drft) \dfeq\ \f{1}{2} \int_0^1(b(\drft,x) - b(\drft_0,x))^T a^{-1}(\diff_0,x)(b(\drft,x) - b(\drft_0,x)) \inv_{\para_0}(dx),
\end{align}
has a unique minimum at $\para=\para_0$;
\item \label{cond:MLE-penalty} $\pen^\vep \rt 0$ in $C(\drpsp, \R)$, as $\vep \rt 0$;
\item \label{cond:MLE-psp} either
\begin{enumerate}[label = (\alph*),ref=\theenumi{}-(\alph*)]
	\item \label{cond:est-tight-1}
 the $\drft$-parameter space $\drpsp$ is closed, and $\{\pest \equiv \pest(\BX_{\tilde t_0: \tilde t_m})\}$ is tight; or,
 
  \item \label{cond:parsp-1} $\drpsp$ is open, convex, and for every $\bx_{0:m} =(x_0,x_1,x_2,\hdots,x_m) \in \R^{d\times {(m+1)}}$, $\diff \in \dfpsp$, $0\leq \delta \leq 1$,  the likelihood equation 
  $$-m^{-1}\nabla_\drft \ell^{m,\delta}_{D}(\drft |\diff, \bx_{0:m})+\pen^{1/m}(\drft)=0$$
   has a unique solution.
  \end{enumerate}
\item  $\tilde \Delta(\vep) \ert 0$. 
\end{enumerate} 
Then for each $\para_0=(\drft_0,\diff_0)$, $\hat\drft^\vep \stackrel{\PP_{\theta_0}} \Rt \drft_0$ as $\vep \rt 0$.
 
\end{theorem}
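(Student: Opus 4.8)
The plan is to deduce the statement from the general consistency result Theorem~\ref{th:const-gen}, applied on $\mathbb A=\drpsp$ to the loss functions $\loss^\vep_A(\drft)\dfeq-\vep\ell^\vep_{A}(\drft)+\pen^\vep(\drft)$ (c.f.~\eqref{eq:est-pen}), whose minimiser is $\pest$. Fix the true value $\para_0=(\drft_0,\diff_0)$ and set
\[
G_{\para_0}(\drft,\diff)\dfeq\int b^T(\drft_0,x)\,a^{-1}(\diff,x)\,b(\drft,x)\,\inv_{\para_0}(dx)-\tfrac{1}{2}\int b^T(\drft,x)\,a^{-1}(\diff,x)\,b(\drft,x)\,\inv_{\para_0}(dx),
\]
and $C_{\para_0}\dfeq\tfrac{1}{2}\int b^T(\drft_0,x)\,a^{-1}(\diff_0,x)\,b(\drft_0,x)\,\inv_{\para_0}(dx)$; these integrals are finite because the integrands are $O(V(x)^{q})$ for some $q$ by the growth bounds in Condition~\ref{cond:SDE-coeff}\ref{cond:b-growth} and Condition~\ref{cond:SDE-coeff}\ref{cond:sig-growth}, while $\int V^{q}\,d\inv_{\para_0}<\infty$ for every $q\ge1$ by the Lyapunov Condition~\ref{cond:SDE} (c.f.~Remark~\ref{rem:exst-stat-dist} and Lemma~\ref{lem:tight-occ}). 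A completion of the square gives the identity $-G_{\para_0}(\drft,\diff_0)=\bar\L_{\para_0}(\drft)-C_{\para_0}$ with $\bar\L_{\para_0}$ as in~\eqref{eq:cen-lik-lim}; in particular $\bar\L_{\para_0}\ge0$ and $\bar\L_{\para_0}(\drft_0)=0$, so by Hypothesis~\ref{cond:MLE-lik-unique} the function $\bar\L_{\para_0}$ attains its minimum uniquely at $\drft_0$. Hence it suffices to prove $\loss^\vep_A(\cdot)+C_{\para_0}\prt\bar\L_{\para_0}(\cdot)$ in $C(\drpsp,\R)$: adding the deterministic constant $C_{\para_0}$ leaves the minimisers unchanged, Condition~(i) of Theorem~\ref{th:const-gen} is the existence of $\pest$, Condition~(iii) is the uniqueness just noted, and Condition~(iv) is exactly the dichotomy in Hypothesis~\ref{cond:MLE-psp} (under~\ref{cond:parsp-1}, differentiability of $\loss^\vep_A$ in $\drft$ is implicit, the penalised likelihood equation presupposing it; c.f.~\eqref{eq:lik-crit}); the conclusion $\pest\prt\drft_0$ then follows.

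Since $\pen^\vep\to0$ in $C(\drpsp,\R)$ by Hypothesis~\ref{cond:MLE-penalty}, what remains is $\vep\ell^\vep_{A}(\cdot)\prt G_{\para_0}(\cdot,\diff_0)$ in $C(\drpsp,\R)$. I would obtain this by first establishing the stronger statement that the two-parameter random field $(\drft,\diff)\mapsto\vep\,\ell^\vep_{D}(\drft\,|\,\diff,\BX_{\tilde t_0:\tilde t_m})$ (c.f.~\eqref{eq:disc-like-0}) converges, in $\PP_{\para_0}$-probability in $C(\drpsp\times\dfpsp_0,\R)$, to $G_{\para_0}$, where $\dfpsp_0$ is a compact subset of $\dfpsp$ with $\diff_0\in\dfpsp_0$ and $\PP_{\para_0}(\hat\diff^\vep\in\dfpsp_0)\to1$; such a $\dfpsp_0$ exists by the hypothesis that $\dfpsp$ is open or else closed with $\hat\diff^\vep\in\dfpsp$, together with the consistency of $\hat\diff^\vep$ (Hypothesis~\ref{cond:MLE-diff-est}), and on $\dfpsp_0\times\R^d$ all conditions of Condition~\ref{cond:SDE-coeff} on $\dffun$ and $a^{-1}$ are in force. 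Because $\ell^\vep_{A}(\cdot)=\ell^\vep_{D}(\cdot\,|\,\hat\diff^\vep,\BX_{\tilde t_0:\tilde t_m})$, composing this uniform limit with the consistent plug-in $\hat\diff^\vep\prt\diff_0$ yields $\vep\ell^\vep_{A}(\cdot)\prt G_{\para_0}(\cdot,\diff_0)$. Passing through the field in $\diff$, rather than substituting $\hat\diff^\vep$ from the outset, is essential: $\hat\diff^\vep=F^\vep(\BX_{\tilde t_0:\tilde t_m})$ is anticipative, so the ``stochastic integral'' term of $\ell^\vep_{A}$ is not a martingale transform, whereas for a \emph{deterministic} $\diff$ it is.

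To prove the convergence of the field at a fixed $(\drft,\diff)\in\drpsp\times\dfpsp_0$, write $dX(s)=b(\drft_0,X(s))\,ds+\dffun(\diff_0,X(s))\,dW(s)$ under $\PP_{\para_0}$ and, using~\eqref{eq:disc-like-0}, split $\vep\,\ell^\vep_{D}(\drft\,|\,\diff,\BX_{\tilde t_0:\tilde t_m})$ into (a) a drift Riemann sum $\vep\sum_i a^{-1}(\diff,X(\tilde t_{i-1}))b(\drft,X(\tilde t_{i-1}))\cdot\int_{\tilde t_{i-1}}^{\tilde t_i}b(\drft_0,X(s))\,ds$, (b) a martingale term $\vep\sum_i a^{-1}(\diff,X(\tilde t_{i-1}))b(\drft,X(\tilde t_{i-1}))\cdot\int_{\tilde t_{i-1}}^{\tilde t_i}\dffun(\diff_0,X(s))\,dW(s)$, and (c) a quadratic Riemann sum. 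Term (b) is $\vep$ times a genuine martingale (the integrand is adapted, $\diff$ being deterministic), with quadratic variation of order $\vep^{-1}$ by the growth bounds on $\dffun$ and $a^{-1}$ in Condition~\ref{cond:SDE-coeff}\ref{cond:sig-growth} and the time-uniform moment bounds $\sup_{t\ge0}\E_{\para_0}[V(X(t))^q]<\infty$ (a consequence of Condition~\ref{cond:SDE}); hence it tends to $0$ in $L^2(\PP_{\para_0})$. In (a) and (c) one first replaces $X(s)$ by the skeleton value $X(\tilde\vr_\vep(s))$, the resulting error vanishing since $\tilde\Delta(\vep)\to0$, by the local Hölder-in-$x$ bound on $b$ in Condition~\ref{cond:SDE-coeff}\ref{cond:b-growth}, the growth bounds, and the strong estimate $\E_{\para_0}\|X(s)-X(\tilde\vr_\vep(s))\|^2=O(\tilde\Delta)$; the remaining expressions have the form $\vep\int_0^{\vep^{-1}}g(X(\tilde\vr_\vep(s)))\,ds$ and converge, by ergodicity of $X$ (Remark~\ref{rem:exst-stat-dist}, Lemma~\ref{lem:tight-occ}, and the extension of the ergodic theorem to functions of polynomial $V$-growth), to $\int g\,d\inv_{\para_0}$, producing the pointwise limit $G_{\para_0}(\drft,\diff)$. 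Finally, tightness of $\{\vep\,\ell^\vep_{D}(\cdot\,|\,\cdot,\BX_{\tilde t_0:\tilde t_m})\}$ in $C(\drpsp\times\dfpsp_0,\R)$ follows from the local Hölder-type Conditions~\ref{cond:SDE-coeff}\ref{cond:b-lip-para} (in $\drft$) and~\ref{cond:SDE-coeff}\ref{cond:diff-inv-lip-para} (in $\diff$): the increment of the field between $(\drft_1,\diff_1)$ and $(\drft_2,\diff_2)$ is bounded by $\consta\bigl(\|\drft_1-\drft_2\|^{\holex}+\|\diff_1-\diff_2\|^{\hexp}\bigr)$ times a prefactor bounded in $\PP_{\para_0}$-probability (again via the $V$-moment bounds, the martingale-difference part of the prefactor being of size $O(\vep^{1/2})$). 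Pointwise convergence together with tightness in $C$ give the claimed convergence in $C(\drpsp\times\dfpsp_0,\R)$ in $\PP_{\para_0}$-probability, and $G_{\para_0}$ is jointly continuous by dominated convergence; this completes the plan.

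The main obstacle is this last step --- the uniform-in-probability convergence of the discretised likelihood field --- where one must control, simultaneously and uniformly in $(\drft,\diff)$, the discretisation error (needing $\tilde\Delta(\vep)\to0$ together with the Hölder-in-$x$ and polynomial-$V$-growth conditions and the time-uniform $V$-moments), the ergodic averaging of $V$-growth functionals (needing the Lyapunov condition both for those moments and for $\int V^q\,d\inv_{\para_0}<\infty$), the vanishing of the $\vep$-rescaled stochastic-integral term --- which is precisely what forces the ``field in $\diff$ first, plug in $\hat\diff^\vep$ last'' route, the anticipativity of $\hat\diff^\vep$ obstructing a direct substitution --- and the equicontinuity in $(\drft,\diff)$ from the local Hölder conditions. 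By comparison, the reduction to Theorem~\ref{th:const-gen}, the completion of the square, and the verification of its Conditions~(i), (iii) and (iv) are routine.
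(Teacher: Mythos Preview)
Your proposal is correct and follows essentially the same approach as the paper. The paper first passes to the time-rescaled process $X^\vep(\cdot)=X(\cdot/\vep)$ on $[0,1]$ (Section~\ref{sec:new-scaling}) and proves the equivalent Theorem~\ref{th:const-AMLE}; its Lemma~\ref{lem:conv-like} is precisely your convergence $\vep\ell^\vep_A(\cdot)\prt G_{\para_0}(\cdot,\diff_0)$, obtained via the ``field in $(\drft,\diff)$ first, plug in $\hat\diff^\vep$ last'' route (packaged as Propositions~\ref{prop:diff-est}\ref{item:diff-est:conv-2} and~\ref{prop:gen-conv2}\ref{item:si-conv}), after which Theorem~\ref{th:const-gen} is invoked exactly as you describe. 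One small point: the paper does not prove the pointwise bound $\sup_{t\ge0}\EE_{\para_0}[V(X(t))^q]<\infty$ you invoke, only the time-averaged version (Proposition~\ref{prop:int-bd}, Corollary~\ref{cor:int-bd-V}), but the latter is all that is needed for the quadratic-variation and ergodic-average estimates in your argument.
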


\begin{remark}{\rm
 Since $\bar \L_{\para_0}(\drft) \geq 0$ for all $\drft \in \drpsp$ and $\bar \L_{\para_0}(\drft_0)=0$, $\drft_0$ is clearly a point of minimum of $\bar \L_{\para_0}.$ For uniqueness of $\drft_0$ as the point of minima, suppose that $b$ is identifiable in the parameter argument in the sense that if $b(\drft, x) = b(\drft', x)$ for a.a $x$, then $\drft=\drft'$. Suppose in addition that for all $\diff \in \dfpsp$ a.a $x$, the matrix $a(\diff, x)$ is positive definite and $\inv_{\para_0}$  is absolutely continuous with respect to $\leb$, with the invariant density $\f{d\inv_{\para_0}}{d\leb}>0$ a.s. If $\tilde \drft_0$ is another point of minimum for $\bar \L_{\para_0}(\cdot)$, then $\bar \L_{\para_0}(\tilde \drft_0) =0$, and consequently, the above assumptions imply $b(\drft_0,\cdot) = b(\tilde\drft_0,\cdot)$ a.s. The identifiability assumption on of $b$ then assures that $\drft_0 = \tilde\drft_0$.

}
\end{remark}
The following result on central limit asymptotics of the estimator of the drift term is proved in a bit more general setting. Specifically, it holds for any solution of the likelihood equation \eqref{eq:lik-crit}, provided it is consistent. Since Theorem \ref{th:const-AMLE-0} establishes consistency of $\pest$, in particular, Theorem \ref{th:aml-clt-0} below  gives the CLT for $\pest$ when it solves the likelihood equation (see assumption \ref{cond:parsp-1} above).

Recall the functions $\drbd'_{b,0}:  \drpsp \rt [0,\infty)$ and $\dibd_{a,0}: \R^{d\times d}_{>0} \rt [0,\infty)$ from Condition \ref{cond:SDE-coeff}.

\begin{theorem}[CLT for estimator of the drift parameter]\label{th:aml-clt-0}
Let $X$, $\BX_{\tilde t_0:\tilde t_m}$ be as before, $\drpsp \subset \R^{n_0}$  open and convex, and 
 $\hat\drft^\vep \equiv \hat\drft^\vep(\BX_{\tilde t_0:\tilde t_m}) $  a solution to the penalized likelihood equation, \eqref{eq:lik-crit}. Assume that either the drift parameter space, $\dfpsp$, is open or  $\dfpsp$ is closed and each $\hat \diff^\vep \equiv \hat \diff^\vep(\BX_{\tilde t_0:\tilde t_m})$ takes values in $\dfpsp$.
Further assume that
\begin{enumerate}[label=(\Alph*), ref=(\Alph*)]
\item \label{cond:CLT-SDE} Condition \ref{cond:SDE} and Condition \ref{cond:SDE-coeff}: \ref{cond:b-growth}, \ref{cond:Db-lip-para}, \ref{cond:sig-growth}-\hyperlink{subitem-s-a}{(a)}, \ref{cond:sig-growth}-\hyperlink{subitem-s-b}{(b)}, \ref{cond:sig-growth}-\hyperlink{subitem-s-c}{(c)} \& \ref{cond:diff-inv-lip-para} hold;
\item \label{cond:CLT-const} for each $\para_0 =(\drft_0,\diff_0)$, both $\hat \drft^\vep\equiv \hat \drft^\vep(\BX_{\tilde t_0:\tilde t_m})$ and $\hat \diff^\vep \equiv \hat \diff^\vep(\BX_{\tilde t_0:\tilde t_m})$ are consistent;
\item  \label{cond:CLT-bds-cont} the functions, $\drbd'_{b,0}:  \drpsp \rt [0,\infty)$ and $\dibd_{a,0}: \R^{d\times d}_{>0} \rt [0,\infty)$ are continuous;
\item \label{cond:clt-conv-penalty} $\vep^{-1/2}\nabla_{\drft} \pen^\vep \rt 0$ in $C(\drpsp, \R^{n_0})$ as $\vep \rt0$
\item $\tilde\Delta(\vep)$ is such that $\tilde\Delta(\vep)/\vep \ert 0$.
\end{enumerate}
 Then under $\PP_{\para_0}$, with $\para_0=(\drft_0,\diff_0)$, $\hat \drft^\vep \equiv \hat \drft^\vep(\BX_{\tilde t_0:\tilde t_m})$ satisfies the following: as $\vep \rt 0$
\begin{align}\label{eq:drft-clt}
\vep^{-1/2}(\hat \drft^\vep - \drft_0) \RT \No_{n_0}(0,\Sigma^{-1}_{\para_0}),
\end{align}
where 
$\dst \Sigma_{\para_0} = \int_{\R^d} D^T_\drft b(\drft_0,  x)a^{-1}(\diff_0,x)D_\drft b(\drft_0, x) \inv_{\theta_0}(dx).$
\end{theorem}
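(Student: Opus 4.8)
\emph{Plan.} The plan is to treat $\hat\drft^\vep$ as a $Z$-estimator: linearise the penalized likelihood equation \eqref{eq:lik-crit} about $\drft_0$, identify the limiting information matrix and the limiting law of the (approximate) score, and assemble the pieces via Slutsky's theorem. Work under $\PP_{\para_0}$, so that $dX(s)=b(\drft_0,X(s))\,ds+\dffun(\diff_0,X(s))\,dW(s)$, and write $X^{\tilde\vr}(s):=X(\tilde\vr_\vep(s))$. Splitting the score \eqref{eq:deriv-like-0} along the drift and martingale parts of $dX$, $\nabla_\drft\ell^\vep_A(\drft)=M^\vep(\drft)+B^\vep(\drft)$, where $M^\vep(\drft):=\int_0^{\vep^{-1}}D^T_\drft b(\drft,X^{\tilde\vr})\,a^{-1}(\hat\diff^\vep,X^{\tilde\vr})\,\dffun(\diff_0,X)\,dW$ is the approximate score. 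Since only $b(\cdot,x)$, not $D_\drft b(\cdot,x)$, is differentiable in $\drft$ (Condition \ref{cond:SDE-coeff}: \ref{cond:b-growth}-(c)), in place of a second-order expansion we apply the integral mean value theorem to $b$ alone, $b(\drft_0,y)-b(\hat\drft^\vep,y)=-\bigl(\int_0^1 D_\drft b(\drft_0+t(\hat\drft^\vep-\drft_0),y)\,dt\bigr)(\hat\drft^\vep-\drft_0)$; combining this with the discretisation increment $b(\drft_0,X(s))-b(\drft_0,X^{\tilde\vr}(s))$ and the likelihood equation $\vep\,\nabla_\drft\ell^\vep_A(\hat\drft^\vep)=\nabla_\drft\pen^\vep(\hat\drft^\vep)$ yields
\begin{align}\label{eq:clt-lin}
\bigl(\vep\,\acov^\vep\bigr)\,\vep^{-1/2}(\hat\drft^\vep-\drft_0)=\vep^{1/2}M^\vep(\hat\drft^\vep)+\vep^{1/2}R^\vep-\vep^{-1/2}\nabla_\drft\pen^\vep(\hat\drft^\vep),
\end{align}
where $\acov^\vep:=\int_0^{\vep^{-1}}D^T_\drft b(\hat\drft^\vep,X^{\tilde\vr})\,a^{-1}(\hat\diff^\vep,X^{\tilde\vr})\bigl(\int_0^1 D_\drft b(\drft_0+t(\hat\drft^\vep-\drft_0),X^{\tilde\vr})\,dt\bigr)\,ds$ is the empirical information and $R^\vep:=\int_0^{\vep^{-1}}D^T_\drft b(\hat\drft^\vep,X^{\tilde\vr})\,a^{-1}(\hat\diff^\vep,X^{\tilde\vr})\bigl(b(\drft_0,X(s))-b(\drft_0,X^{\tilde\vr}(s))\bigr)\,ds$ the drift discretisation remainder.

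\emph{The deterministic limits.} It then remains to establish, under $\PP_{\para_0}$: (i) $\vep\,\acov^\vep\prt\Sigma_{\para_0}$; (ii) $\vep^{1/2}M^\vep(\hat\drft^\vep)\RT\No_{n_0}(0,\Sigma_{\para_0})$; (iii) $\vep^{1/2}R^\vep\prt0$ and $\vep^{-1/2}\nabla_\drft\pen^\vep(\hat\drft^\vep)\prt0$. Granting these, positive definiteness of $\Sigma_{\para_0}$ (presupposed by the statement) and (i) make $\vep\,\acov^\vep$ invertible with probability tending to $1$, and then \eqref{eq:clt-lin} with Slutsky's theorem gives $\vep^{-1/2}(\hat\drft^\vep-\drft_0)\RT\Sigma_{\para_0}^{-1}\No_{n_0}(0,\Sigma_{\para_0})=\No_{n_0}(0,\Sigma_{\para_0}^{-1})$ by symmetry of $\Sigma_{\para_0}$. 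For (i), consistency of $\hat\drft^\vep$ and $\hat\diff^\vep$ (hypothesis \ref{cond:CLT-const}; for the approximate MLE itself, Theorem \ref{th:const-AMLE-0}) confines $\drft_0+t(\hat\drft^\vep-\drft_0)$, $t\in[0,1]$, and $\hat\diff^\vep$ to shrinking neighbourhoods of $\drft_0$ and $\diff_0$; then the local H\"older continuity in the parameter of $D_\drft b$ and $a^{-1}$ (Condition \ref{cond:SDE-coeff}: \ref{cond:Db-lip-para}, \ref{cond:diff-inv-lip-para}), the growth bounds of Condition \ref{cond:SDE-coeff}, continuity of $\drbd'_{b,0}$ and $\dibd_{a,0}$ (hypothesis \ref{cond:CLT-bds-cont}), and the uniform $V$-moment and discretised occupation-time estimates of Section \ref{sec:aux-results} (available under hypothesis \ref{cond:CLT-SDE}) let us replace $\hat\drft^\vep,\hat\diff^\vep,X^{\tilde\vr}$ by $\drft_0,\diff_0,X$ up to $o_{\PP_{\para_0}}(1)$, after which the ergodic theorem (Lemma \ref{lem:tight-occ} and its discretised counterpart) gives $\vep\int_0^{\vep^{-1}}D^T_\drft b(\drft_0,X)a^{-1}(\diff_0,X)D_\drft b(\drft_0,X)\,ds\prt\Sigma_{\para_0}$. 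For (iii), the penalty term vanishes by hypothesis \ref{cond:clt-conv-penalty} together with tightness of $\hat\drft^\vep$; and the $x$-H\"older bound on $b$ (Condition \ref{cond:SDE-coeff}: \ref{cond:b-growth}-(b)), the growth bounds, the discretisation estimate $\mathbb{E}\|X(s)-X(\tilde\vr_\vep(s))\|^p\lesssim\tilde\Delta^{p/2}$, and ergodic averaging bound $\vep^{1/2}\|R^\vep\|$, up to $V$-moment factors, by a constant multiple of $\vep^{-1/2}\tilde\Delta^{\holex_b/2}$, which tends to $0$ under the scaling hypothesis ($\tilde\Delta/\vep\to0$ being exactly what is needed when $b$ is Lipschitz in $x$, i.e.\ $\holex_b=1$).

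\emph{CLT for the approximate score, and the main obstacle.} For (ii) it suffices to prove $\vep^{1/2}M^\vep(\drft_0)\RT\No_{n_0}(0,\Sigma_{\para_0})$ and $\vep^{1/2}\bigl(M^\vep(\hat\drft^\vep)-M^\vep(\drft_0)\bigr)\prt0$. Replacing $X^{\tilde\vr}$ by $X$ and $\hat\diff^\vep$ by $\diff_0$ in $M^\vep(\drft_0)$ produces the genuine continuous martingale $\widetilde M^\vep:=\int_0^{\vep^{-1}}D^T_\drft b(\drft_0,X)a^{-1}(\diff_0,X)\dffun(\diff_0,X)\,dW$; since $\dffun\dffun^T=a$ and $a^{-1}aa^{-1}=a^{-1}$, its quadratic variation is $\int_0^{\vep^{-1}}D^T_\drft b(\drft_0,X)a^{-1}(\diff_0,X)D_\drft b(\drft_0,X)\,ds$, so $\vep\langle\widetilde M^\vep\rangle_{\vep^{-1}}\prt\Sigma_{\para_0}$ by Lemma \ref{lem:tight-occ}, and the central limit theorem for continuous martingales (the Lindeberg/jump condition being vacuous; equivalently a Dambis--Dubins--Schwarz time-change) gives $\vep^{1/2}\widetilde M^\vep_{\vep^{-1}}\RT\No_{n_0}(0,\Sigma_{\para_0})$. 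The delicate point, which I expect to be the main obstacle, is that the replacements above, and the increment $M^\vep(\hat\drft^\vep)-M^\vep(\drft_0)$, must be shown to contribute only $o_{\PP_{\para_0}}(\vep^{-1/2})$: since $\hat\diff^\vep$ and $\hat\drft^\vep$ depend on the whole trajectory, the relevant integrands are \emph{not adapted}, so It\^o isometry and orthogonality of increments are not directly available and the triangle inequality is far too lossy. The remedy is a stochastic-equicontinuity argument: for a \emph{deterministic} $\diff$ near $\diff_0$ the quantity $Z^\vep(\diff):=\vep^{1/2}\int_0^{\vep^{-1}}D^T_\drft b(\drft_0,X^{\tilde\vr})\bigl(a^{-1}(\diff,X^{\tilde\vr})-a^{-1}(\diff_0,X^{\tilde\vr})\bigr)\dffun(\diff_0,X)\,dW$ \emph{is} a martingale, and orthogonality of its increments together with Condition \ref{cond:SDE-coeff}: \ref{cond:diff-inv-lip-para}, the growth bounds and the $V$-moment estimates give $\mathbb{E}\|Z^\vep(\diff)\|^2\lesssim\|\diff-\diff_0\|^{2\hexp}$ uniformly for small $\vep$; H\"older continuity of $\diff\mapsto Z^\vep(\diff)$ then upgrades this, by a covering argument, to $\mathbb{E}\sup_{\|\diff-\diff_0\|\le\delta}\|Z^\vep(\diff)\|^2=o_\delta(1)$ uniformly for small $\vep$, and consistency of $\hat\diff^\vep$ closes the gap (the $dX$-drift contribution to the same increment is of lower order under $\tilde\Delta/\vep\to0$). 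The identical mechanism, now using the $\drft$-H\"older continuity of $D_\drft b$ (Condition \ref{cond:SDE-coeff}: \ref{cond:Db-lip-para}) and consistency of $\hat\drft^\vep$, handles $M^\vep(\hat\drft^\vep)-M^\vep(\drft_0)$. The remainder is bookkeeping with the growth exponents of Condition \ref{cond:SDE-coeff} and the moment and discretisation estimates of Section \ref{sec:aux-results}.
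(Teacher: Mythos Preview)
Your proposal is correct and follows essentially the same route as the paper. The paper first passes to the rescaled process $X^\vep$ on $[0,1]$ (Section~\ref{sec:new-scaling}), but modulo that change of variables the argument is identical: the likelihood equation is rewritten as $0=\Up^\vep(1)+\vep^{-1/2}\SC{I}^\vep_1-\vep^{-1/2}\SC{I}^\vep_2-\vep^{-1/2}\nabla_\drft\pen^\vep(\hat\drft^\vep)$, where $\Up^\vep$ is your $M^\vep(\hat\drft^\vep)$, $\SC{I}^\vep_1$ is your $R^\vep$, and $\SC{I}^\vep_2$ is handled by the same integral mean-value expansion of $b(\hat\drft^\vep,\cdot)-b(\drft_0,\cdot)$ that you use, yielding your $\acov^\vep$.

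The one place where the packaging differs is your item (ii). You propose to freeze $\diff$ (and $\drft$) at deterministic values, use It\^o isometry on the resulting adapted integrands to get $\EE\|Z^\vep(\diff)\|^2\lesssim\|\diff-\diff_0\|^{2\hexp}$, then upgrade to a uniform bound by a covering/Kolmogorov argument and invoke consistency. The paper does exactly this, but abstracts it into Proposition~\ref{prop:gen-conv2}: part \ref{item:si-tgt} establishes tightness of the stochastic integral as a process indexed by the parameter (via the same moment bound you describe, cf.\ \eqref{eq:tight-mean}), and part \ref{item:si-conv} uses that tightness together with $\hat\apar^{1,\vep}\prt\apar^1_0$ to conclude $\Phi^{W,\vep}(\hat\apar^{1,\vep},\cdot,\cdot)-\Phi^{W,\vep}(\apar^1_0,\cdot,\cdot)\prt0$. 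Your ``stochastic equicontinuity via covering'' and the paper's ``tightness in $C(\cmpt\times[0,1])$'' are the same mechanism; the paper's formulation has the advantage of simultaneously delivering the martingale CLT for $\Up^\vep(1)$ (since the quadratic variation converges by Proposition~\ref{prop:diff-est}), whereas you split this into a separate step with the auxiliary martingale $\widetilde M^\vep$. Both work. Your parenthetical remark that the remainder bound $\vep^{-1/2}\tilde\Delta^{\holex_b/2}\to0$ matches the hypothesis $\tilde\Delta/\vep\to0$ only when $\holex_b=1$ is accurate and applies equally to the paper's invocation of Corollary~\ref{cor:diff-est-1}-\ref{item:diff-est:diff-2}.
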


Notice that \ref{cond:clt-conv-penalty} in the above theorem holds for penalty function of the form $\pen^\vep(\drft) = \vep^{\alpha+1/2}\|\drft\|^p, \ p\geq 1, \alpha>0.$

\subsection{Consistency and CLT of diffusion estimator}\label{sec:diff-est-lim}
We will establish asymptotics of the estimator $\hat \diff^\vep$ for two specific forms of the diffusion parameter, $\diff$, discussed in Section \ref{sec:diff-est}.
\begin{theorem}\label{th:const-diff-0} Suppose the diffusion coefficient $\dffun$ is given by either Form 1 or Form 2, as mentioned in Section \ref{sec:diff-est}. Accordingly, define the estimator, $\hat\diff^\vep \equiv \hat\diff^\vep(\BX_{\tilde t_0:\tilde t_m})$, by \eqref{eq:diff-est-1} or by \eqref{eq:diff-est-2}. Assume that
\begin{enumerate}[label=(\Alph*), ref=(\Alph*)]
\item \label{cond:LLN-diff}
Condition \ref{cond:SDE}: \ref{cond:item:growth-rec} - \ref{cond:item:growth-lyap-b} and Condition \ref{cond:SDE-coeff}: \ref{cond:b-growth}-\hyperlink{subitem-a}{(a)}, \ref{cond:sig-growth}-\hyperlink{subitem-s-a}{(a)}, \ref{cond:sig-growth}-\hyperlink{subitem-s-b}{(b)} hold;
\item \label{cond:LLN-diff-stepsize} $\tilde \Delta(\vep) \ert 0$.
\end{enumerate}
Then for each $\para_0=(\drft_0,\diff_0)$, 
$\hat \diff^\vep \stackrel{\PP_{\theta_0}} \Rt \diff_0$ as $\vep \rt 0$..
\end{theorem}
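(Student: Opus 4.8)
The plan is to show that the defining equation for $\hat\diff^\vep$ becomes, in the limit $\vep\rt 0$, an identity that pins down $\diff_0$; the key is a law-of-large-numbers/ergodic-averaging statement applied separately to the discretized quadratic variation and to the Riemann-sum integrals appearing in \eqref{eq:diff-est-1} and \eqref{eq:diff-est-2}. I would treat Form 1 in detail and note that Form 2 is handled identically after applying the $\ve_{d\times d}$ and Kronecker-product bookkeeping. Write $\hat\diff^\vep = \bigl(\int_0^{\vep^{-1}} a_0(X(\tilde\vr_\vep(s)))ds\bigr)^{-1}\mqd{X}^{D,\tilde\vr_\vep}_{\vep^{-1}}$. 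The strategy is: (i) show $\vep\int_0^{\vep^{-1}} a_0(X(\tilde\vr_\vep(s)))ds \prt \int a_0(x)\inv_{\para_0}(dx)$; (ii) show $\vep\,\mqd{X}^{D,\tilde\vr_\vep}_{\vep^{-1}} \prt \int a(\diff_0,x)\inv_{\para_0}(dx) = \int a_0(x)\,\diff_0\,\inv_{\para_0}(dx)$; and then, since $a_0$ does not depend on $\diff$ (Form 1), combine (i) and (ii) to conclude $\hat\diff^\vep \prt \bigl(\int a_0(x)\inv_{\para_0}(dx)\bigr)^{-1}\int a_0(x)\inv_{\para_0}(dx)\,\diff_0 = \diff_0$ — using that the integrals commute with the constant matrix $\diff_0$ when $a(\diff_0,x)=a_0(x)\diff_0$, and invertibility of $\int a_0\,d\inv_{\para_0}$ (which follows since $a_0$ is p.d.-valued, hence its average is p.d.).

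For step (i), the ergodic averaging is the content of the auxiliary results referenced in the excerpt (Lemma on tightness of occupation measures, Remark \ref{rem:exst-stat-dist}), combined with Condition \ref{cond:SDE}:\ref{cond:item:growth-rec}–\ref{cond:item:growth-lyap-b}; the replacement of $X(s)$ by the discretized $X(\tilde\vr_\vep(s))$ inside the time average costs an error controlled by the modulus of continuity of $a_0$ (via the growth/H\"older bounds in Condition \ref{cond:SDE-coeff}:\ref{cond:sig-growth}) times $\tilde\Delta(\vep)$, which vanishes by assumption \ref{cond:LLN-diff-stepsize}. The main work is step (ii): decompose each increment $X(\tilde t_{i+1})-X(\tilde t_i) = \int_{\tilde t_i}^{\tilde t_{i+1}} b(\drft_0,X(s))ds + \int_{\tilde t_i}^{\tilde t_{i+1}} \dffun(\diff_0,X(s))dW(s)$, expand the outer product, and show that (a) the drift$\times$drift and drift$\times$martingale cross terms are $O(\tilde\Delta)$ (or $o(1)$ after scaling by $\vep$) using the $V$-moment bounds and that $\vep\sum_i \tilde\Delta^2 = \tilde\Delta \rt 0$; (b) the martingale$\times$martingale term, $\sum_i \bigl(\int_{\tilde t_i}^{\tilde t_{i+1}}\dffun dW\bigr)\bigl(\int_{\tilde t_i}^{\tilde t_{i+1}}\dffun dW\bigr)^T$, has conditional expectation $\sum_i \int_{\tilde t_i}^{\tilde t_{i+1}} a(\diff_0,X(s))ds = \mqd{X}_{\vep^{-1}}$, so it equals $\mqd{X}_{\vep^{-1}}$ plus a martingale whose quadratic variation is $O(\tilde\Delta)$ relative to the time span; hence $\vep\,\mqd{X}^{D,\tilde\vr_\vep}_{\vep^{-1}} = \vep\,\mqd{X}_{\vep^{-1}} + o_{\PP}(1) = \vep\int_0^{\vep^{-1}} a(\diff_0,X(s))ds + o_{\PP}(1)$, and the latter converges to $\int a(\diff_0,x)\inv_{\para_0}(dx)$ by ergodic averaging as in step (i).

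The main obstacle I anticipate is the error analysis in step (ii)(b) — showing that the difference $\mqd{X}^{D,\tilde\vr_\vep}_{\vep^{-1}} - \mqd{X}_{\vep^{-1}}$, after multiplication by $\vep$, is $o_{\PP}(1)$ \emph{uniformly as the time span $\vep^{-1}$ grows}. This is exactly the subtlety flagged in the introduction: a naive bound gives a constant growing with $T=\vep^{-1}$, so one cannot afford fixed $\tilde\Delta$; one must exploit the ergodic (stationary) control of the $V$-moments $\sup_{\vep}\vep\int_0^{\vep^{-1}}\EE_{\para_0}[V(X(s))^{p}]ds<\infty$ for the relevant exponents $p$ (guaranteed by Condition \ref{cond:SDE}), so that the accumulated discretization error scales like $\tilde\Delta(\vep)\cdot(\text{bounded})$ rather than $\tilde\Delta\cdot e^{CT}$. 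Once that uniform moment control is in hand, the martingale terms are dispatched by Doob/Burkholder together with $\vep\sum_i\tilde\Delta^2\rt 0$, and the rest is routine. For Form 2 the only change is that one works with $\ve_{d\times d}(\mqd{X}^{D,\tilde\vr_\vep}_{\vep^{-1}})$ and the matrix $\int \dffun_0\otimes\dffun_0\,d\inv_{\para_0}$, whose invertibility again follows from positive definiteness of $\dffun_0$; the limit identity $\ve_{d\times d}(\hat\diff^\vep)\prt \bigl(\int\dffun_0\otimes\dffun_0\,d\inv_{\para_0}\bigr)^{-1}\ve_{d\times d}\bigl(\int\dffun_0\diff_0\dffun_0^T\,d\inv_{\para_0}\bigr) = \ve_{d\times d}(\diff_0)$ uses the standard Kronecker identity $\ve(ABC)=(C^T\otimes A)\ve(B)$.
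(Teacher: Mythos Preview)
Your proposal is correct and follows the same overall architecture as the paper: reduce to showing (i) the denominator Riemann sum converges to its ergodic limit, and (ii) the scaled discretized quadratic variation approximates $\int a(\diff_0,\cdot)$ well enough; then combine. Your identification of the uniform-in-$T$ moment control (your ``main obstacle'' paragraph) is exactly the role played by Proposition~\ref{prop:int-bd} and Corollary~\ref{cor:int-bd-V} in the paper.

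Where you and the paper differ is in the mechanics of step~(ii). You expand each increment as $\int b\,ds + \int\dffun\,dW$, multiply out, and handle drift$\times$drift, drift$\times$martingale, and martingale$\times$martingale separately, the last via a martingale-difference argument comparing $\sum(\int\dffun\,dW)(\int\dffun\,dW)^T$ to $\mqd{X}$. The paper instead uses the discrete integration-by-parts identity
\[
\mqd{X^\vep}^{D,\vr_\vep}_t = X^\vep(t)X^\vep(t)^T - X^\vep(0)X^\vep(0)^T - 2\Bigl(\int_0^t X^\vep(\vr_\vep(s))\,dX^\vep(s)^T\Bigr)_{\mathrm{sym}},
\]
compares it to the same expression with $X^\vep(s)$ in place of $X^\vep(\vr_\vep(s))$ (which by It\^o is $\mqd{X^\vep}_t$), and is left with three error terms $R^\vep_0,R^\vep_1,R^\vep_2$ of the form $\int(X^\vep(s)-X^\vep(\vr_\vep(s)))\cdot(\ldots)$ that are dispatched directly by the already-proved discretization estimate (Corollary~\ref{cor:diff-est-1}). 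The paper's route is slightly more economical in that it avoids your separate martingale-fluctuation step and recycles its earlier lemmas wholesale; your route is the more classical ``expand the square'' argument and is equally valid. A minor organizational difference: the paper writes $\hat\diff^\vep-\diff_0$ directly as $(\int a_0)^{-1}\bigl(\vep\mqd{X^\vep}^{D}-\int a_0\diff_0\bigr)$ and shows the second factor tends to zero, rather than proving numerator and denominator converge separately; this sidesteps any issue of commuting $\diff_0$ through the integral and makes the Form~2 case cleaner via $\ve_{d\times d}$.
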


\np
Recall the H\"older exponents $\holex_b, \holex_a'$ from Condition \ref{cond:SDE-coeff}.
\begin{theorem}[CLT of diffusion estimator]\label{th:clt-diff-0}
Define the estimator, $\hat\diff^\vep \equiv \hat\diff^\vep(\BX_{\tilde t_0:\tilde t_m})$, by \eqref{eq:diff-est-1} or by \eqref{eq:diff-est-2} depending on whether $\dffun$ is given by Form 1 or Form 2.
Suppose that
\begin{enumerate}[label=(\Alph*), ref=(\Alph*)]
\item \label{cond:CLT-diff} Condition \ref{cond:SDE}: \ref{cond:item:growth-rec} - \ref{cond:item:growth-lyap-b} and Condition \ref{cond:SDE-coeff}: \ref{cond:b-growth}-\hyperlink{subitem-a}{(a)}, \ref{cond:b-growth}-\hyperlink{subitem-b-lip}{(b)}, \ref{cond:sig-growth}-\hyperlink{subitem-s-a}{(a)}, \ref{cond:sig-growth}-\hyperlink{subitem-s-b}{(b)}, \ref{cond:sig-growth}-\hyperlink{subitem-s-d}{(d)}, \ref{cond:sig-growth}-\hyperlink{subitem-s-e}{(e)} hold;
\vs{.1cm}
\item \label{cond:CLT-diff-stepsize} $\tilde\Delta(\vep)^{\holex_b \wedge \holex'_a}/\vep \ert 0$;
\end{enumerate}
Let $\zeta$ be an $\R^{d\times d}$-valued random variable such that  
$$ \ve_{d\times d}(\zeta) \sim \lf(\f{1}{2}\int a(\diff_0,x) \ot a(\diff_0,x) \pi^{st}_{\para_0}(dx)\ri)^{1/2} \No_{d^2}(0,I).$$
Then the following hold for $\hat \diff^\vep \equiv \hat \diff^\vep(\BX_{\tilde t_0:\tilde t_m})$ under $\PP_{\para_0}$ as $\vep \rt 0$.
\begin{enumerate}[label = (\roman*),ref=(\roman*)]
\item \label{item:clt-diff-form1} If $\dffun$ if of Form 1, that is, $\dffun(\diff,x) = \lf(a_0(x)\diff\ri)^{1/2}$, then 
\begin{align*}
    \Delta(\vep)^{-1/2}(\hat \diff^\vep -\diff_0) \RT  2\lf(\int a_0(x)\inv_{\theta_0}(dx)\ri)^{-1}(\zeta)_{\mathrm{sym}}.
\end{align*}

\item \label{item:clt-diff-form2} If $\dffun$ if of Form 2, that is, $\dffun(\diff,x) = \dffun_0(x)\kappa$ with $\kappa\kappa^T = \diff$, then 
\begin{align*}
    \Delta(\vep)^{-1/2}(\hat \diff^\vep -\diff_0) \RT  2\lf(\int \dffun_0(x) \ot \dffun_0(x)\inv_{\theta_0}(dx)\ri)^{-1}(\zeta)_{\mathrm{sym}}.
\end{align*}
\end{enumerate}
\end{theorem}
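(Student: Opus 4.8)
The plan is to work directly from the defining equations \eqref{eq:diff-est-1} (resp. \eqref{eq:diff-est-2}) for $\hat\diff^\vep$ and to extract the leading-order fluctuation. First I would rewrite the numerator $\mqd{X}^{D,\tilde\vr_\vep}_{\vep^{-1}}$ using the SDE \eqref{eq:SDE0}: on each subinterval $[\tilde t_i, \tilde t_{i+1}]$ write $X(\tilde t_{i+1}) - X(\tilde t_i) = \int_{\tilde t_i}^{\tilde t_{i+1}} b(\drft_0, X(s))\,ds + \int_{\tilde t_i}^{\tilde t_{i+1}} \dffun(\diff_0, X(s))\,dW(s)$. Squaring, the drift$\times$drift and drift$\times$martingale cross terms contribute $O(\tilde\Delta)$ per step in a way that sums to a term of order $\tilde\Delta$ over $[0,\vep^{-1}]$ (hence negligible after multiplying by $\vep^{-1/2}\cdot\vep = \vep^{1/2}$-type scaling — this is where the step-size condition \ref{cond:CLT-diff-stepsize}, $\tilde\Delta^{\holex_b\wedge\holex_a'}/\vep\to 0$, enters). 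The martingale$\times$martingale term, after subtracting its compensator, gives the genuine $\mqd{X}^{D,\tilde\vr_\vep}_{\vep^{-1}} = \int_0^{\vep^{-1}} a(\diff_0, X(s))\,ds + M^\vep_{\vep^{-1}} + (\text{error})$ decomposition, where $M^\vep$ is a matrix-valued martingale built from $\dffun(\diff_0,X(s))\,dW(s)$, and its predictable quadratic variation is computed to be $\int_0^{\vep^{-1}}\!\big[a(\diff_0,X(s))\otimes a(\diff_0,X(s)) + (\text{transposition terms})\big]ds$ at the vectorized level. The replacement of $X(s)$ by $X(\tilde\vr_\vep(s))$ inside $a_0$ or $\dffun_0$ in the denominator integral costs another $O(\tilde\Delta^{\holex_a'})$ by the Hölder conditions \ref{cond:sig-growth}-\hyperlink{subitem-s-d}{(d)},\hyperlink{subitem-s-e}{(e)} together with moment bounds on $V(X(s))$ from the auxiliary results.

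Next I would invoke the martingale central limit theorem: rescaling, $\vep^{1/2} M^\vep_{\vep^{-1}}$ should converge in distribution, and by ergodicity $\vep\,\langle M^\vep\rangle_{\vep^{-1}} \to \frac12\int a(\diff_0,x)\otimes a(\diff_0,x)\,\inv_{\para_0}(dx)$ (the factor $\tfrac12$ and the symmetrization come from the structure of quadratic variation of $\int \dffun\,dW$ — the off-diagonal entries pair up). This identifies the limit as the Gaussian vector $\ve_{d\times d}(\zeta)$ in the statement. Simultaneously, by the law of large numbers for ergodic diffusions (Theorem \ref{th:const-diff-0}'s ingredients, i.e. Lemma \ref{lem:tight-occ} and the occupation-measure convergence), $\vep\int_0^{\vep^{-1}} a_0(X(\tilde\vr_\vep(s)))\,ds \to \int a_0(x)\,\inv_{\para_0}(dx)$ in probability (resp. the $\dffun_0\otimes\dffun_0$ version).

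Then, for Form 1, from \eqref{eq:diff-est-1},
\[
\tilde\Delta(\vep)^{-1/2}(\hat\diff^\vep - \diff_0) = \tilde\Delta(\vep)^{-1/2}\Big(\int_0^{\vep^{-1}} a_0(X(\tilde\vr_\vep(s)))\,ds\Big)^{-1}\Big(\mqd{X}^{D,\tilde\vr_\vep}_{\vep^{-1}} - \int_0^{\vep^{-1}} a_0(X(\tilde\vr_\vep(s)))\,ds\,\diff_0\Big),
\]
and the bracketed difference is exactly $M^\vep_{\vep^{-1}}$ plus negligible terms, so I would match scalings: I expect the correct normalization to make $\tilde\Delta^{-1/2}$ combine with $\vep^{-1/2}$-type factors — here one must be careful, since the theorem scales by $\tilde\Delta(\vep)^{-1/2}$ rather than $\vep^{-1/2}$, reflecting that the quadratic-variation estimator concentrates at rate $\sqrt{\tilde\Delta/T} = \sqrt{\tilde\Delta\vep}$ so that $\tilde\Delta^{-1/2}(\hat\diff^\vep-\diff_0) = \tilde\Delta^{-1/2}\cdot(\text{mean of }\asymp \vep^{-1}\text{ terms each }O(\tilde\Delta)) $; a Slutsky argument combining the CLT for the martingale piece with the LLN for the denominator, plus the continuous-mapping theorem applied to the matrix inverse and to $(\cdot)_{\mathrm{sym}}$, then yields the stated limit $2(\int a_0\,d\inv_{\para_0})^{-1}(\zeta)_{\mathrm{sym}}$; the factor $2$ tracks back to $\hat\diff^\vep - \diff_0$ versus the symmetric part. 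Form 2 is handled identically after applying $\ve_{d\times d}$ and using the Kronecker identity $\ve(A\diff A^T) = (A\otimes A)\ve(\diff)$, with $\dffun_0\otimes\dffun_0$ replacing $a_0$.

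The main obstacle I anticipate is twofold: (i) controlling the discretization error uniformly in the growing time horizon — showing the accumulated per-step errors (drift-martingale cross terms, the Hölder replacement $X(s)\mapsto X(\tilde\vr_\vep(s))$, and the Itô-vs-Riemann correction) are genuinely $o(\text{rate})$, which is precisely the role of Condition \ref{cond:CLT-diff-stepsize} and requires the moment bounds $\sup_\vep \vep\int_0^{\vep^{-1}}\E_{\para_0}[V(X(s))^{p}]\,ds < \infty$ from the auxiliary section; and (ii) verifying the Lindeberg/conditional-variance hypotheses of the martingale CLT for $M^\vep$ in the ergodic regime, i.e. that the time-averaged bracket converges to the $\inv_{\para_0}$-integral and that a Lyapunov-type negligibility of jumps holds — both again leaning on ergodicity and the polynomial-in-$V$ growth of $a(\diff_0,\cdot)$.
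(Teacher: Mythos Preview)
Your overall strategy---expand the increments, isolate the martingale fluctuation coming from the squared stochastic-integral part, apply a martingale CLT with ergodic averaging for its bracket, and combine with the LLN for the denominator via Slutsky and continuous mapping---is correct and is essentially the paper's approach. The paper organizes the decomposition via a product-It\^o identity (Lemma~\ref{lem:diff-const}) rather than by directly squaring each increment, and works in the time-rescaled regime of Section~\ref{sec:new-scaling}, but the resulting pieces (the paper's $R_0^\vep,R_1^\vep,R_2^\vep$, with the leading martingale $M_2^\vep$ inside $R_2^\vep$) match yours.

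There is, however, one genuine gap. Your claim that the replacement $X(s)\mapsto X(\tilde\vr_\vep(s))$ inside $a$ ``costs $O(\tilde\Delta^{\holex'_a})$'' understates the difficulty: the naive Cauchy--Schwarz/H\"older bound on the term $R_0^\vep=\int_0^1\big(a(\diff_0,X^\vep(s))-a(\diff_0,X^\vep(\vr_\vep(s)))\big)\,ds$ yields only $\EE_{\para_0}\|R_0^\vep\|^2\leq C\,\Delta/\vep$ (Corollary~\ref{cor:diff-est-1}), and after the CLT normalization $\Delta^{-1/2}$ this leaves a contribution of order $\vep^{-1/2}$, which diverges. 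The paper closes this via Lemma~\ref{lem:fun-diff-est-2}: one Taylor-expands $a(\diff_0,\cdot)$ to first order in $x$, freezes the coefficients at the left partition point, and recognizes that the dominant piece is of the form $\int_0^1 g(X^\vep(\vr_\vep(s)))\big(W(s)-W(\vr_\vep(s))\big)\,ds$, which is a \emph{discrete-time martingale along the partition} (Lemma~\ref{lem:disc-mart}) and hence gains an extra factor $\Delta^{1/2}$. This upgrades the bound to $\EE_{\para_0}\|R_0^\vep\|^2\leq C(\Delta/\vep)^{1+\holex'_a}$, and it is precisely this improved exponent that makes Condition~\ref{cond:CLT-diff-stepsize} the right hypothesis. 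The paper applies the same device to $R_1^\vep$ (with exponent $\holex_b$). Your obstacle~(i) correctly flags the region, but the specific mechanism---first-order Taylor expansion plus the hidden martingale structure of $\int g(X^\vep(\vr_\vep(s)))(W(s)-W(\vr_\vep(s)))\,ds$---is the missing idea without which the error bounds you sketch do not close at the CLT scale.
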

As mentioned in the notation paragraph of the introduction, for a square matrix $A$, $(A)_{\mathrm{sym}} = \f{1}{2}(A+A^T)$.

\begin{remark}\label{rem:diff-two-forms} {\rm 
It is now instructive to describe the situations when the diffusion function $\dffun$ given by the two forms considered in Section \ref{sec:diff-est} satisfy Condition \ref{cond:SDE}:\ref{cond:sig-growth} \& \ref{cond:diff-inv-lip-para}. It is immediate that \ref{cond:SDE}:\ref{cond:sig-growth} hold for $\dffun$ if $a_0$ or $\dffun_0$ and the corresponding derivative $D_x a_0$ or $D_x \dffun_0$ (depending on whether $\dffun$ is given by Form 1 or Form 2) satisfy the growth conditions, Condition \ref{cond:SDE}:\ref{cond:sig-growth}-\hyperlink{subitem-s-a}{(a)}, \ref{cond:sig-growth}-\hyperlink{subitem-s-d}{(d)} and the H\"older-type continuity conditions (in $x$-variable), Condition \ref{cond:SDE}:\ref{cond:sig-growth}-\hyperlink{subitem-s-b}{(b)}, \ref{cond:sig-growth}-\hyperlink{subitem-s-e}{(e)}. Next note that under the assumption that $a_0$ and $\dffun_0$ are invertible, Condition \ref{cond:SDE}:\ref{cond:diff-inv-lip-para} holds for both the forms of $\dffun$, as the mapping $\diff \in \R^{d\times d}_{>0} \rt \diff^{-1} \in \R^{d\times d}_{>0} $ is locally Lipschitz continuous.
To see this, let $\dfpsp_0 \subset \R^{d\times d}_{>0}$ be a compact set. Since $\diff \in \R^{d\times d}_{\geq 0}  \rt \eig_{(1)}(\diff) \in [0,\infty)$ is continuous (e.g. see \cite[Corollary 6.3.8]{HoJo13}, \cite{Lax07}), $\eig_{(1),*}(\dfpsp_0) \equiv \inf_{\diff \in \dfpsp_0} \eig_{(1)}(\diff) >0$, where recall that $\eig_{(1)}(A)$ denotes the smallest eigenvalue of symmetric matrix $A$. Thus for $\diff_1, \diff_2 \in \dfpsp_0,$
\begin{align*}
\|\diff_1^{-1} - \diff_2^{-1}\| = \|\diff_1^{-1}(\diff_1 -\diff_2)\diff_2^{-1}\| \leq 1/\lf(\eig_{(1)}(\diff_1)\eig_{(1)}(\diff_2)\ri)\|\diff_1-\diff_2\| \leq \lf(1/\eig^2_{(1),*}(\dfpsp_0)\ri)\|\diff_1-\diff_2\|.
\end{align*}
    }
\end{remark}

\begin{example}\label{ex:SDE-lin-para}
We now consider a large class of parametric SDEs where the drift function $b$ of \eqref{eq:SDE0} is of the form $b(\mu, x) = B_0(x)\drft$, that is, the corresponding SDE is of the form
\begin{align}\label{eq:SDE-lin-para}
X(t) = X(0)+\int_0^t B_0(X(s))\drft\ ds+ \int_0^t\dffun(\diff, X(s))\ dW(s).
\end{align}	
 Here $B_0: \R^d \rt \R^{d\times n_0}$  is known, and $\drft \in \R^{n_0}$ is unknown. We assume the function $B_0$ and $\dffun$ are such that the conditions of Theorem \ref{th:const-AMLE-0} and Theorem \ref{th:aml-clt-0} hold. 
 
 Let $\hat \diff^\vep$ be an available consistent estimator of the diffusion parameter $\diff$. Then the likelihood equation \eqref{eq:lik-crit} in this case gives
\begin{align}\label{eq:lik-eq-lin-para}
\begin{aligned}
\int_0^{\vep^{-1}}B_0^T(X(\tilde{\vr_\vep}(s)))&\ a^{-1}(\hat\diff^\vep,X(\tilde{\vr_\vep}(s))) dX(s)\\
&\ = \int_0^{\vep^{-1}} B^T_0(X(\tilde\vr_\vep(s))) a^{-1}(\hat\diff^\vep,X(\tilde{\vr_\vep}(s))) B_0( X(\tilde\vr_\vep(s)))\ \drft\ ds.
\end{aligned}
\end{align}
Solving we get $\hat\drft^\vep_{\aml}$ as
\begin{align}\label{eq:SDE-lin-para-est}
 \begin{aligned}
    \hat\drft^\vep_{\aml}=&\ \lf(\int_0^{\vep^{-1}} B^T_0( X(\tilde\vr_\vep(s))) a^{-1}(\hat\diff^\vep,X(\tilde{\vr_\vep}(s))) B_0(X(\tilde\vr_\vep(s)))\  ds\ri)^{-1}\\
    & \hs{.9cm}\ \lf(\int_0^{\vep^{-1}}B_0^T(X(\tilde{\vr_\vep}(s)))a^{-1}(\hat\diff^\vep,X(\tilde{\vr_\vep}(s))) dX(s)\ri).
    \end{aligned}
\end{align}
 If the true parameter is $\para_0 = (\drft_0,\diff_0)$, then under the assumption that $\tilde \Delta(\vep) \rt 0$, $\hat\drft^\vep_{\aml}$ is consistent, and under the assumption that $\tilde \Delta(\vep)/\vep \rt 0$,  Theorem \ref{th:aml-clt-0} guarantees a CLT for $\hat\drft^\vep_{\aml}$ with  limiting covariance matrix $\Sigma^{-1}_{\para_0}$, where
$$\Sigma_{\para_0} = \int B^T_0(x)a^{-1}(\diff_0, x)B_0(x) \inv_{\para_0}(dx).$$

Notice that this setup also applies to SDEs of the form 
\begin{align}\label{eq:SDE-lin-para-2}
X(t) = X(0)+\int_0^t \mdrft \beta_0(X(s))\ ds+ \int_0^t\dffun(\diff, X(s))\ dW(s),
\end{align}
where $\mdrft \in \R^{d\times m_0}$ and $\beta_0: \R^d \rt \R^{m_0}$.
Observe that \eqref{eq:SDE-lin-para-2} can be written in the form of \eqref{eq:SDE-lin-para} with
$$B_0(x) = \beta^T_0(x)\ot I_{d\times d},\quad \drft = \ve_{d\times m_0} (\mdrft).$$
It follows from \eqref{eq:SDE-lin-para-est} that in this case
\begin{align}\label{eq:SDE-lin-para-est-2}
\begin{aligned}
\hat\drft^\vep_{\aml}=&\ \lf(\int_0^{\vep^{-1}} \beta_0( X(\tilde\vr_\vep(s)))\beta^T_0( X(\tilde\vr_\vep(s))) \ot a^{-1}(\hat\diff^\vep,X(\tilde{\vr_\vep}(s)))\ ds\ri)^{-1}\\
    & \hs{.9cm}\ \lf(\int_0^{\vep^{-1}}\beta_0(X(\tilde{\vr_\vep}(s))) \ot a^{-1}(\hat\diff^\vep,X(\tilde{\vr_\vep}(s))) dX(s)\ri),
 \end{aligned}
\end{align} 
with the inverse of the limiting covaraince matrix of the corresponding CLT given by 
\begin{align}\label{eq:SDE-lin-lim-cov}
	\Sigma_{\para_0} = \int \beta_0(x)\beta^{T}_{0}(x) \ot a^{-1}(\diff_0, x) \inv_{\para_0}(dx).
\end{align}

\np
{\bf Multidimensional OU process:} As a special case, consider the multidimensional Ornstein-Uhlenbeck (OU) SDE given by
\begin{align}\label{eq:SDE-multi-OU}
X(t) = X(0)+\int_0^t (g-HX(s))\ ds+ \kappa W(t),
\end{align}
where $g \in \R^{d}$, $H \in \R^{d\times d}$ and $\diff \in \R^{d\times d}.$ We assume that $H$ and $\diff \equiv \kappa\kappa^T$ are invertible and that the real part of eigenvalues of $H$ are positive, so that $X$ admits a stationary distribution given by $\No_{d}(H^{-1}g, F),$ where the covariance matrix $F$ satisfies the Lyapunov equation: $HF+FH^{T}=\diff.$ The diffusion matrix $\diff=\kappa\kappa^T$ can be estimated as 
\begin{align*}
\ve_{d\times d}(\hat\diff^\vep) =\ve_{d\times d} \lf(\mqd{X}^{D,\tilde\vr_\vep}_{\vep^{-1}}\ri).
\end{align*}
 Now letting $\mdrft=[g, H]$, $\drft = \ve_{d\times (d+1)}([g, H])$ and $\beta_0(x) = \begin{pmatrix}
1\\
x
\end{pmatrix}$,
$x \in \R^d$, we get from \eqref{eq:SDE-lin-para-est-2}
\begin{align*}
\hat\drft^\vep_{\aml}=&\ \lf(\int_0^{\vep^{-1}} \begin{pmatrix}
1 & X^T(\tilde{\vr_\vep}(s))\\
X(\tilde{\vr_\vep}(s)) & X(\tilde{\vr_\vep}(s))X^T(\tilde{\vr_\vep}(s))
\end{pmatrix} \ ds\ri)^{-1}\ot \hat\diff^\vep  \\
& \hs{1.9cm} \int_0^{\vep^{-1}}\begin{pmatrix}
1\\
X(\tilde{\vr_\vep}(s)) 
\end{pmatrix} \ot (\hat\diff^\vep)^{-1} \ dX(s) \\
=&\ \lf(\int_0^{\vep^{-1}} \begin{pmatrix}
1 & X^T(\tilde{\vr_\vep}(s))\\
X(\tilde{\vr_\vep}(s)) & X(\tilde{\vr_\vep}(s))X^T(\tilde{\vr_\vep}(s))
\end{pmatrix} \ ds\ri)^{-1}\int_0^{\vep^{-1}}\begin{pmatrix}
1\\
X(\tilde{\vr_\vep}(s)) 
\end{pmatrix} \ot I_{d\times d}\ dX(s).
\end{align*} 
It follows from \eqref{eq:SDE-lin-lim-cov} that the inverse of the limiting covaraince matrix of the corresponding CLT of $\hat \drft^\vep$ is given by 
\begin{align*}
\Sigma_{\para_0} = \begin{pmatrix}
1 & g^TH^{-T}\\
H^{-1}g & H^{-1}gg^TH^{-T}+F
\end{pmatrix} \ot \diff^{-1}
\end{align*}

\end{example}

\subsection{A scaling regime and equivalent formulation of the results} \label{sec:new-scaling}
For certain technical conveniences, we work under the transformation $t \rt t/\vep$. A simple change of variable formula shows  that the dynamics of  $X(\cdot/\vep)$ is same as that of the following SDE:
\begin{align}\label{eq:SDE1}
	X^\vep(t) &= x_0 + \f{1}{\vep}\int_0^t b(\drft, X^\vep(s))ds+  \f{1}{\sqrt \vep}\int_0^t\dffun(\diff, X^\vep(s)) dW(s),
\end{align}
in the sense that under $\PP_\para$ with $\para =(\drft,\diff)$, $X(\cdot/\vep) \eqd X^\vep$ in $C([0,T],\R^d)$.
In particular,  
\begin{align*}
\BX_{\tilde t_0:\tilde t_m} \equiv  (X(\tilde t_0), X(\tilde t_1), X(\tilde t_2), \hdots, X(\tilde t_m)) \eqd  (X^\vep(t_0), X^\vep(t_1), X^\vep(t_2), \hdots, X^\vep(t_m)) \equiv \BX^\vep_{t_0:t_m},
\end{align*}
where
$\BX_{\tilde t_0:\tilde t_m}$ on the left side is the data from the original process $X$ over the time interval $[0,\vep^{-1}]$ at time points $\{\tilde t_i:i=0,1,2,\hdots,m\}$ ($\tilde t_m = \vep^{-1}$) with frequency $\tilde \Delta = \tilde t_i - \tilde t_{i-1}$, and $\BX^\vep_{t_0:t_m}$ on  the right side is the data from the (scaled) process $X^\vep$ over the time interval $[0,1]$ at time points $\{t_i = \vep\tilde t_i:i=1,2,\hdots,m\}$ with frequency 
$$ \Delta(\vep) \equiv t_i-t_{i-1}= \vep(\tilde t_i - \tilde t_{i-1}) = \vep \tilde \Delta(\vep).$$
Consequently, for any measurable function $F^\vep: \R^{d\times (m+1)} \rt \R^{d'}$
\begin{align}\label{eq:eqv-diff-est}
F^\vep(\BX_{\tilde t_0:\tilde t_m}) \eqd  F^\vep(\BX^\vep_{t_0:t_m}).
\end{align}
and for  measurable functions $f_i:\R^d \rt \R^{d^f_i}$ and $g_i:\R^d \rt \R^{d_0\times d^g_i}$, $i=1,2$
\begin{align*}
& \lf( \int_{0}^{\vep^{-1}t}f_1(X(s))ds, \int_{0}^{\vep^{-1}t}f_2(X(\tilde{\vr_\vep}(s)))ds, \int_{0}^{\vep^{-1} t}g_1(X(s))dX(s), \int_{0}^{\vep^{-1} t}g_2(X(\tilde{\vr_\vep}(s)))dX(s)\ri)\\
 & = \lf(\vep^{-1}\int_{0}^{t}f_1(X(r/\vep))ds,\vep^{-1}\int_{0}^{t}f_2(X(\tilde{\vr_\vep}(r/\vep)))dr, \int_{0}^{t}g_1(X(r/\vep))dX(r/\vep),  \ri. \\
&  \hs{1.2cm}\lf. \int_{0}^{t}g_2(X(\tilde{\vr_\vep}(r/\vep)))dX(r/\vep) \ri)\\
  &\eqd  \lf(\vep^{-1}\int_{0}^{t}f(X^\vep(r))dr,  \vep^{-1} \int_{0}^{t}f(X^\vep(\vr_\vep(r)))dr, \int_{0}^{t}g(X^\vep(r))dX^\vep(r),  \int_{0}^{t}g(X^\vep(\vr_\vep(r)))dX^\vep(r)\ri),
%
\end{align*}
where as in \eqref{eq:disc-like-int-0} for $s \in [0,\vep^{-1}]$, $\tilde\vr_\vep(s) = \tilde t_i$ if $\tilde t_i \leq s < \tilde t_{i+1},$ and for $r \in [0,1]$, $\vr_\vep(r) = \tilde t_i/\vep \equiv t_i$ if $t_i \leq r < t_{i+1}$. Clearly, $\tilde\vr_\vep(r/\vep) = \vr_\vep(r)/\vep$.
In fact the equality in distribution in the previous display holds at a process level (and not merely for each fixed $t$). 

Recall the estimation of the drift parameter in Section \ref{sec:drift-est} assumed availability of a (consistent) estimator, $\hat\diff^\vep \equiv \hat\diff^\vep(\BX_{\tilde t_0:\tilde t_m})$, of the diffusion parameter $\diff$, and it was assumed that it is a measurable function of the data $\BX_{\tilde t_0:\tilde t_m}$. It readily follows from the above paragraph that under $\PP_{\para}$, for every $\vep>0$, 
\begin{align}\label{eq:sc-lik-eq}
\hat\diff^\vep(\BX_{\tilde t_0:\tilde t_m}) \eqd  \hat\diff^\vep(\BX^\vep_{t_0:t_m}), \quad \ell^\vep_{D}(\drft |\diff, \BX_{\tilde t_0:\tilde t_m}) \ \eqd \ \ell^\vep_{D}(\drft |\diff, \BX^\vep_{t_0: t_m}).
\end{align}
Here $\hat\diff^\vep(\BX_{\tilde t_0:\tilde t_m})$ and $\ell^\vep_{D}(\drft |\diff, \BX_{\tilde t_0:\tilde t_m})$ are respectively the estimator of the parameter $\diff$ and the likelihood function of $\drft$ (given by \eqref{eq:disc-like-int-0}) based on $\BX_{\tilde t_0:\tilde t_m}$ (recall $\tilde t_m= \vep^{-1}$), the data from the original process $X$ over the time interval $[0,\vep^{-1}]$, while $\hat\diff^\vep(\BX^\vep_{t_0:t_m})$ and $\ell^\vep_{D}(\drft |\diff, \BX^\vep_{t_0: t_m})$ are, respectively, the corresponding estimator of $\diff$ and the likelihood function of $\drft$ based on $\BX^\vep_{t_0:t_m}$, the data from the process $X^\vep$ over the time interval $[0,1]$. $\ell^\vep_{D}(\drft |\diff, \BX^\vep_{t_0: t_m})$ is of course given by
\begin{align}\label{eq:approx-like-int}
\begin{aligned}
\ell^\vep_{D}(\drft |\diff,\BX^\vep_{t_0: t_m}) =&\ \int_0^{1}  a(\diff,X^\vep(\vr_\vep(s))^{-1}b(\drft, X^\vep(\vr_\vep(s)))\cdot dX^\vep(s)\\
& \hs{.2cm}  - \f{1 }{2\vep} \int_0^{1} b^T(\drft, X^\vep(\vr_\vep(s)))a^{-1}(\diff,X^\vep(\vr_\vep(s)))b(\drft, X^\vep(\vr_\vep(s))) ds.
\end{aligned}
\end{align}
Here by a slight abuse of notation we continued to use $\hat\diff^\vep$ and $\ell^\vep_D$ to denote the estimator of $\diff$ and the likelihood corresponding to data from the (scaled) process $X^\vep$.

In the same spirit, we continue to denote by $\pest= \pest(\BX^\vep_{t_0: t_m})$ the AMLE corresponding to the (penalized) likelihood of data from $X^\vep$, that is,
\begin{align}\label{eq:est-pen-scaled}
\pest(\BX^\vep_{t_0: t_m}) \in&\ \argmin_{\drft \in \drpsp} \lf\{-\vep\ell^\vep_{A}(\drft|\BX^\vep_{t_0: t_m}))+ \pen^\vep(\drft)\ri\}
\end{align}
 with $\ell^\vep_A$ now obtained by replacing $\diff$ by $\hat \diff^\vep = \diff^\vep(\BX^\vep_{t_0:t_m})$ in \eqref{eq:approx-like-int}, that is, $\ell^\vep_{A}(\drft |\BX^\vep_{t_0: t_m}) \dfeq \ell^\vep_{D}(\drft |\hat\diff^\vep,\BX^\vep_{t_0: t_m})$. \eqref{eq:sc-lik-eq} immediately implies that
$$ \pest(\BX_{\tilde t_0:\tilde t_m})\ \eqd \ \pest(\BX^\vep_{t_0: t_m})$$
when comparing the corresponding points of minimum.
As before, when the mappings the mappings $\drft \rt \pen^\vep(\drft)$ and $\drft \rt b(\drft,x)$ for each $x \in \R^d$ are differentiable, and $\drpsp$ is open and convex, $\pest(\BX^\vep_{t_1: t_m})$ is a critical point of $\loss^\vep(\drft)$, that is, the solution of the penalized likelihood equation
\begin{align}\label{eq:lik-crit-scaled}
0=\nabla_{\drft}\loss^\vep_A(\drft | \BX^\vep_{t_0: t_m}) \equiv - \vep \nabla_{\drft}\ell^\vep_{A}(\drft|\BX^\vep_{t_0: t_m})+\nabla_\drft \pen^\vep(\drft),
\end{align}
with $\nabla_{\drft}\ell^\vep_{A}(\drft|\BX^\vep_{t_0: t_m})$ now given by
\begin{align}\label{eq:deriv-like}
\begin{aligned}
\nabla_{\drft}\ell^{\vep}_A(\drft|\BX^\vep_{t_0: t_m}) =&\ \int_0^{1}D^T_\drft b(\drft, X^\vep(\vr_\vep(s))) a^{-1}(\hat\diff^\vep, X^\vep(\vr_\vep(s))) dX^\vep(s)\\
	& \hs{.2cm} -\f{1}{\vep}\int_0^{1} D^T_\drft b(\drft, X^\vep(\vr_\vep(s))) a^{-1}(\hat\diff^\vep,X^\vep(\vr_\vep(s))) b(\drft, X^\vep(\vr_\vep(s)))\ ds.
\end{aligned}
\end{align}

It is now clear that  Theorem \ref{th:const-AMLE-0} and Theorem \ref{th:aml-clt-0} are equivalent to Theorem \ref{th:const-AMLE} and Theorem \ref{th:aml-clt} below, respectively, and the latter two results are what we prove in this paper.

\begin{theorem} \label{th:const-AMLE}
For each $\vep>0$, let $X^\vep$ be the solution to the SDE, \eqref{eq:SDE1}, and let $\BX^\vep_{ t_0:t_m} = (X^\vep(t_0), X^\vep(t_1), X^\vep(t_2), \hdots, X^\vep(t_m))$ be data from $X^\vep$ from the interval $[0, 1]$ at the observation time points $\{ t_i:i=1,2,\hdots,m\}$ with $ \Delta \equiv \Delta(\vep) \dfeq t_i - t_{i-1}$ denoting the time gap between two successive observations.  Assume that the assumption \ref{cond:MLE-SDE}, \ref{cond:MLE-diff-est} (with $\hat \diff^\vep \equiv \hat \diff^\vep(\BX^\vep_{t_0: t_m})$), \ref{cond:MLE-lik-unique}, \ref{cond:MLE-penalty}, \ref{cond:MLE-psp} (with $\pest \equiv \pest(\BX^\vep_{t_0: t_m})$ in part \ref{cond:est-tight-1}) of Theorem \ref{th:const-AMLE-0} hold, and $\dfpsp$ satisfies the condition of Theorem \ref{th:const-AMLE-0}.
Further assume that $\Delta(\vep)/\vep \ert 0$. Then for each $\para_0$, $\pest \stackrel{\PP_{\theta_0}} \Rt \drft_0$ as $\vep \rt 0$.
\end{theorem}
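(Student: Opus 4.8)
The plan is to deduce the theorem from the abstract convergence-of-minima result, Theorem~\ref{th:const-gen}, applied to the random fields $\loss^\vep_A(\drft)\dfeq -\vep\,\ell^\vep_A(\drft\,|\,\BX^\vep_{t_0:t_m})+\pen^\vep(\drft)$ on $\mathbb A=\drpsp$, with deterministic limit $\loss^0=\bar\L_{\para_0}$. Of the four hypotheses of Theorem~\ref{th:const-gen}, (i) (existence of a global minimum of $\loss^\vep_A$) and (iv) (either $\drpsp$ closed with $\{\pest\}$ tight, or $\drpsp$ open and convex with $\nabla_\drft\loss^\vep_A=0$ uniquely solvable) are furnished verbatim by assumption~\ref{cond:MLE-psp}, while (iii) (uniqueness of the minimizer of $\bar\L_{\para_0}$) is assumption~\ref{cond:MLE-lik-unique} together with the obvious $\bar\L_{\para_0}(\drft_0)=0=\min\bar\L_{\para_0}$. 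Hence the whole work lies in hypothesis (ii): proving that, under $\PP_{\para_0}$, $\loss^\vep_A(\cdot)$ converges in $C(\drpsp,\R)$, uniformly on compacts and in probability, to $\bar\L_{\para_0}(\cdot)$ up to an additive, $\drft$-independent constant (which does not affect the set of minimizers and is therefore immaterial).

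To identify this limit I would write $\loss^\vep_A=(\mathrm I)+(\mathrm{II})+\pen^\vep$ with $(\mathrm I)\dfeq-\vep\,\ell^\vep_D(\drft\,|\,\diff_0,\BX^\vep_{t_0:t_m})$ the exact discretized negative log-likelihood evaluated at the \emph{true} diffusion parameter, and $(\mathrm{II})\dfeq\vep\big[\ell^\vep_D(\drft\,|\,\diff_0,\BX^\vep_{t_0:t_m})-\ell^\vep_D(\drft\,|\,\hat\diff^\vep,\BX^\vep_{t_0:t_m})\big]$ the plug-in discrepancy; here $\pen^\vep\rt 0$ in $C(\drpsp,\R)$ by~\ref{cond:MLE-penalty}. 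In $(\mathrm I)$ the discretized stochastic integral is a genuine finite sum over $\{t_i\}$, and substituting $dX^\vep(s)=\vep^{-1}b(\drft_0,X^\vep(s))\,ds+\vep^{-1/2}\dffun(\diff_0,X^\vep(s))\,dW(s)$ from~\eqref{eq:SDE1} splits $(\mathrm I)$ into (a) $-\int_0^1 b^T(\drft,X^\vep(\vr_\vep(s)))a^{-1}(\diff_0,X^\vep(\vr_\vep(s)))b(\drft_0,X^\vep(s))\,ds$, (b) a bona fide martingale term $-\sqrt\vep\sum_i a^{-1}(\diff_0,X^\vep(t_{i-1}))b(\drft,X^\vep(t_{i-1}))\cdot\int_{t_{i-1}}^{t_i}\dffun(\diff_0,X^\vep)\,dW$, and (c) $\tfrac12\int_0^1 (b^T a^{-1} b)(\drft,X^\vep(\vr_\vep(s)))\,ds$. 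The discretization errors (replacing $X^\vep(\vr_\vep(s))$ by $X^\vep(s)$) are controlled through the increment bound $\|X^\vep(s)-X^\vep(\vr_\vep(s))\|_{L^2}=O\big(\Delta(\vep)/\vep+\sqrt{\Delta(\vep)/\vep}\big)\rt 0$, which is exactly where the hypothesis $\Delta(\vep)/\vep\rt 0$ enters, combined with the H\"older conditions~\ref{cond:b-growth}-(b) and~\ref{cond:sig-growth}-(b), the polynomial-in-$V$ growth bounds of Condition~\ref{cond:SDE-coeff}, and the $V$-moment bounds for $X^\vep$ that follow from Condition~\ref{cond:SDE}. The ergodic theorem for the fast process $X^\vep$ (valid since $X^\vep(\cdot)\eqd X(\cdot/\vep)$ and $X$ is $\inv_{\para_0}$-ergodic, so $\int_0^1 f(X^\vep(s))\,ds\prt\int f\,d\inv_{\para_0}$ for $\inv_{\para_0}$-integrable $f$) then gives $(\mathrm I)\prt\tfrac12\int b^T(\drft,x)a^{-1}(\diff_0,x)b(\drft,x)\,\inv_{\para_0}(dx)-\int b^T(\drft,x)a^{-1}(\diff_0,x)b(\drft_0,x)\,\inv_{\para_0}(dx)$, i.e.\ $\bar\L_{\para_0}(\drft)$ minus the $\drft$-independent constant $c_{\para_0}\dfeq\tfrac12\int b^T(\drft_0,x)a^{-1}(\diff_0,x)b(\drft_0,x)\,\inv_{\para_0}(dx)$.

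Showing $(\mathrm{II})\prt 0$ uniformly on compacts is the crux and the main obstacle. On the high-probability event $\{\hat\diff^\vep\in K\}$, with $K$ a fixed compact neighbourhood of $\diff_0$, condition~\ref{cond:diff-inv-lip-para} yields $\|a^{-1}(\diff_0,x)-a^{-1}(\hat\diff^\vep,x)\|\lesssim V(x)^{q}\,\|\hat\diff^\vep-\diff_0\|^{\nu}$; consequently the contributions to $(\mathrm{II})$ coming from the Lebesgue integrals and from the drift term of $dX^\vep$ are each bounded by $\|\hat\diff^\vep-\diff_0\|^{\nu}$ times a tight family of random variables, and hence vanish by consistency of $\hat\diff^\vep$ (assumption~\ref{cond:MLE-diff-est}). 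The delicate remaining piece is $R^\vep(\drft)\dfeq\sqrt\vep\sum_i\big[a^{-1}(\diff_0,X^\vep(t_{i-1}))-a^{-1}(\hat\diff^\vep,X^\vep(t_{i-1}))\big]b(\drft,X^\vep(t_{i-1}))\cdot\int_{t_{i-1}}^{t_i}\dffun(\diff_0,X^\vep)\,dW$: its integrand is \emph{not} adapted, because $\hat\diff^\vep$ is a functional of the entire record $\BX^\vep_{t_0:t_m}$, so simply factoring out $\|\hat\diff^\vep-\diff_0\|^{\nu}$ and bounding the rest in absolute value is too crude --- it produces only a bound of order $\sqrt{\vep/\Delta(\vep)}\,\|\hat\diff^\vep-\diff_0\|^{\nu}$, with $\vep/\Delta(\vep)\to\infty$. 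The remedy is to retain the martingale cancellation in the $i$-sum: for each \emph{fixed} $\diff\in K$ the analogue $Z^\vep(\diff)\dfeq\sqrt\vep\sum_i\big[a^{-1}(\diff_0,X^\vep(t_{i-1}))-a^{-1}(\diff,X^\vep(t_{i-1}))\big]b(\drft,X^\vep(t_{i-1}))\cdot\int_{t_{i-1}}^{t_i}\dffun(\diff_0,X^\vep)\,dW$ is an honest discrete-time martingale, and the Burkholder--Davis--Gundy inequality together with the $V$-moment bounds give $\mathbb E|Z^\vep(\diff)|^{2k}\lesssim\vep^{k}\|\diff-\diff_0\|^{2k\nu}$ and $\mathbb E|Z^\vep(\diff_1)-Z^\vep(\diff_2)|^{2k}\lesssim\vep^{k}\|\diff_1-\diff_2\|^{2k\nu}$ for every integer $k\ge 1$. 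Choosing $k$ so large that $2k\nu$ exceeds the dimension of the $\diff$-parameter and invoking a Garsia--Rodemich--Rumsey (Kolmogorov-type) estimate on the compact $K$ then gives $\mathbb E\big[\sup_{\diff\in K}|Z^\vep(\diff)|^{2k}\big]\lesssim\vep^{k}\rt 0$, so that $R^\vep(\drft)=Z^\vep(\hat\diff^\vep)\prt 0$. The very same parametrized-field device, now indexed by $\drft$ and driven by the local H\"older-in-$\drft$ condition~\ref{cond:b-lip-para}, upgrades all the pointwise-in-$\drft$ limits obtained above to convergence uniform on compact subsets of $\drpsp$.

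Assembling the pieces, $\loss^\vep_A(\cdot)+c_{\para_0}\prt\bar\L_{\para_0}(\cdot)$ in $C(\drpsp,\R)$ under $\PP_{\para_0}$; since adding the $\drft$-independent constant $c_{\para_0}$ leaves the set of minimizers unchanged, $\pest\in\argmin_{\drft\in\drpsp}\big(\loss^\vep_A(\drft)+c_{\para_0}\big)$, and $\bar\L_{\para_0}\ge 0$ attains its unique minimum at $\drft_0$ by~\ref{cond:MLE-lik-unique}. All hypotheses of Theorem~\ref{th:const-gen} being verified (hypothesis (iv) via~\ref{cond:MLE-psp}), that theorem yields $\pest\prt\drft_0$ as $\vep\rt0$, which is the claim.
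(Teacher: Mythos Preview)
Your proposal is correct and follows essentially the same route as the paper. The paper's own proof is two lines: invoke Lemma~\ref{lem:conv-like} (which shows $\vep\ell^\vep_A(\cdot)\prt\L_{\para_0}(\cdot)$ in $C(\drpsp,\R)$) and then apply Theorem~\ref{th:const-gen}. The substance is all in Lemma~\ref{lem:conv-like}, whose proof in turn rests on Propositions~\ref{prop:diff-est} and~\ref{prop:gen-conv2}; there the paper handles the random $\hat\diff^\vep$ by the same mechanism you describe --- establish Kolmogorov-type moment bounds for the field $(\diff,\drft)\mapsto\Phi^{W,\vep}(\diff,\drft,\cdot)$ jointly in \emph{both} parameters, deduce tightness in $C(\dfpsp\times\drpsp\times[0,1])$, and then plug in $\hat\diff^\vep$ using its consistency.

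The only organizational difference is that the paper works with $\ell^\vep_D(\drft\,|\,\hat\diff^\vep)$ directly as a two-parameter field, whereas you split it as $\ell^\vep_D(\drft\,|\,\diff_0)+[\ell^\vep_D(\drft\,|\,\hat\diff^\vep)-\ell^\vep_D(\drft\,|\,\diff_0)]$ and treat the two pieces separately. Your decomposition has the pedagogical merit of isolating exactly where the non-adaptedness difficulty lives (your term $R^\vep(\drft)$) and showing explicitly why a naive absolute-value bound fails there; the paper's joint treatment is a bit more economical but hides this point inside the proof of Proposition~\ref{prop:gen-conv2}-\ref{item:si-conv}. Either way the crux --- BDG on the $\diff$-parametrized martingale plus a GRR/Kolmogorov supremum bound to pass to $\diff=\hat\diff^\vep$ --- is identical.
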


\begin{theorem}\label{th:aml-clt}
Let $\hat\drft^\vep \equiv \hat\drft^\vep(\BX^\vep_{ t_0:t_m}) $ be a solution to the penalized likelihood equation, \eqref{eq:lik-crit-scaled}.  Assume that the assumptions \ref{cond:CLT-SDE}, \ref{cond:CLT-const} (with $\hat\drft^\vep \equiv \hat\drft^\vep(\BX^\vep_{ t_0:t_m})$ and $\hat\diff^\vep \equiv \hat\diff^\vep(\BX^\vep_{ t_0:t_m})$), \ref{cond:CLT-bds-cont} and \ref{cond:clt-conv-penalty}  of Theorem \ref{th:aml-clt-0} hold, and $\drpsp$ and $\dfpsp$ satisfy the conditions of Theorem \ref{th:aml-clt-0}. Further assume that $\Delta(\vep)$ is such that $\Delta(\vep)/\vep^2\rt 0$ as $\vep \rt 0$. Then under $\PP_{\para_0}$, $\hat\drft^\vep \equiv \hat\drft^\vep(\BX^\vep_{ t_0:t_m})$ satisfies \eqref{eq:drft-clt}.

\end{theorem}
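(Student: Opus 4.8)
Fix $\para_0=(\drft_0,\diff_0)$; all convergences below are under $\PP_{\para_0}$. The plan is a standard M-estimator linearisation carried out directly on the defining equation \eqref{eq:lik-crit-scaled}. Evaluating it at $\drft=\hat\drft^\vep$, i.e. $\vep\,\nabla_\drft\ell^\vep_A(\hat\drft^\vep)=\nabla_\drft\pen^\vep(\hat\drft^\vep)$, and substituting the dynamics of $X^\vep$ from \eqref{eq:SDE1} into the Riemann-sum ``stochastic integral'' of \eqref{eq:deriv-like} --- legitimate term by term, since each increment $X^\vep(t_i)-X^\vep(t_{i-1})$ is rewritten through the SDE, even though the resulting $dW$-term carries the non-adapted plug-in $\hat\diff^\vep$ and is therefore not a martingale --- the $\vep^{-1}$-drift contributions combine with the compensator term, and a first-order Taylor expansion $b(\hat\drft^\vep,x)-b(\drft_0,x)=\big(\int_0^1 D_\drft b(\drft_0+u(\hat\drft^\vep-\drft_0),x)\,du\big)(\hat\drft^\vep-\drft_0)$ yields, after multiplication by $\vep^{-1/2}$, the identity
\[
\mathcal I^\vep\,\vep^{-1/2}(\hat\drft^\vep-\drft_0)\;=\;M^\vep\;+\;\vep^{-1/2}R^\vep\;-\;\vep^{-1/2}\nabla_\drft\pen^\vep(\hat\drft^\vep),
\]
where $\mathcal I^\vep$ is the observed information $\int_0^1 D^T_\drft b(\hat\drft^\vep,X^\vep(\vr_\vep(s)))\,a^{-1}(\hat\diff^\vep,X^\vep(\vr_\vep(s)))\big(\int_0^1 D_\drft b(\drft_0+u(\hat\drft^\vep-\drft_0),X^\vep(\vr_\vep(s)))\,du\big)\,ds$, the rescaled score is $M^\vep=\int_0^1 D^T_\drft b(\hat\drft^\vep,X^\vep(\vr_\vep(s)))\,a^{-1}(\hat\diff^\vep,X^\vep(\vr_\vep(s)))\,\dffun(\diff_0,X^\vep(s))\,dW(s)$, and the discretisation remainder is $R^\vep=\int_0^1 D^T_\drft b(\hat\drft^\vep,X^\vep(\vr_\vep(s)))\,a^{-1}(\hat\diff^\vep,X^\vep(\vr_\vep(s)))\big(b(\drft_0,X^\vep(s))-b(\drft_0,X^\vep(\vr_\vep(s)))\big)\,ds$. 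It therefore suffices to establish: (i) $\mathcal I^\vep\prt\Sigma_{\para_0}$; (ii) $\vep^{-1/2}R^\vep\prt0$ and $\vep^{-1/2}\nabla_\drft\pen^\vep(\hat\drft^\vep)\prt0$; (iii) $M^\vep\RT\No_{n_0}(0,\Sigma_{\para_0})$. Given these, Slutsky's lemma, the continuous-mapping theorem, and invertibility of $\Sigma_{\para_0}$ give $\vep^{-1/2}(\hat\drft^\vep-\drft_0)\RT\No_{n_0}(0,\Sigma_{\para_0}^{-1})$, i.e. \eqref{eq:drft-clt}.

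\textbf{Step (i): the observed information.} Using the consistency of $\hat\drft^\vep$ and $\hat\diff^\vep$ (assumption \ref{cond:CLT-const}), the local H\"older continuity of $D_\drft b$ and of $a^{-1}$ in their parameter arguments (Conditions \ref{cond:Db-lip-para} and \ref{cond:diff-inv-lip-para}), the polynomial-in-$V$ growth bounds of Condition \ref{cond:SDE-coeff} together with the continuity of $\drbd'_{b,0}$ and $\dibd_{a,0}$ (assumption \ref{cond:CLT-bds-cont}), and uniform-in-$\vep$ moment bounds on $V(X^\vep)$ furnished by the Lyapunov condition \ref{cond:SDE} and the auxiliary estimates of Section \ref{sec:aux-results}, I would (a) replace $\hat\drft^\vep,\hat\diff^\vep$ by $\drft_0,\diff_0$ inside $\mathcal I^\vep$ with $L^1$-error $o_P(1)$, (b) replace $X^\vep(\vr_\vep(s))$ by $X^\vep(s)$ using the increment moment estimates and $\Delta(\vep)\to0$, and (c) invoke the ergodic theorem for the occupation measure of $X^\vep$ over $[0,1]$ (Lemma \ref{lem:tight-occ} and its consequences) to obtain $\mathcal I^\vep\prt\int_{\R^d}D^T_\drft b(\drft_0,x)\,a^{-1}(\diff_0,x)\,D_\drft b(\drft_0,x)\,\inv_{\para_0}(dx)=\Sigma_{\para_0}$.

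\textbf{Step (iii): CLT for the score.} The obstacle here is that $M^\vep$ carries the non-adapted plug-ins, so it is not a martingale. Introduce the genuinely adapted surrogate $\widetilde M^\vep=\int_0^1 D^T_\drft b(\drft_0,X^\vep(\vr_\vep(s)))\,a^{-1}(\diff_0,X^\vep(\vr_\vep(s)))\,\dffun(\diff_0,X^\vep(s))\,dW(s)$, which, over $[0,1]$ in the filtration $\{\salg_s\}$, is a true continuous martingale --- equivalently, a martingale-difference sum $\sum_i(\cdots)\int_{t_{i-1}}^{t_i}\dffun(\diff_0,X^\vep)\,dW$. First, decomposing $M^\vep-\widetilde M^\vep$ across its two parameter slots, each resulting sum is, for fixed parameter values, a martingale-difference sum, so an It\^o-isometry computation bounds its $L^2$-norm by a constant times a positive power of the distance of the parameter from $\para_0$, uniformly in $\vep$; a chaining argument over a neighbourhood of $\para_0$ together with the consistency of $(\hat\drft^\vep,\hat\diff^\vep)$ then gives $M^\vep-\widetilde M^\vep\prt0$. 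Second, the quadratic variation $\langle\widetilde M^\vep\rangle_1=\int_0^1 D^T_\drft b(\drft_0,X^\vep(\vr_\vep(s)))\,a^{-1}(\diff_0,X^\vep(\vr_\vep(s)))\,a(\diff_0,X^\vep(s))\,a^{-1}(\diff_0,X^\vep(\vr_\vep(s)))\,D_\drft b(\drft_0,X^\vep(\vr_\vep(s)))\,ds$ collapses, after the discretisation reductions of Step (i) and using $a^{-1}aa^{-1}=a^{-1}$, to $\int_0^1 D^T_\drft b(\drft_0,X^\vep(s))\,a^{-1}(\diff_0,X^\vep(s))\,D_\drft b(\drft_0,X^\vep(s))\,ds\prt\Sigma_{\para_0}$; the CLT for continuous (local) martingales --- convergence of the terminal quadratic variation to the deterministic matrix $\Sigma_{\para_0}$ --- then gives $\widetilde M^\vep\RT\No_{n_0}(0,\Sigma_{\para_0})$, and hence $M^\vep\RT\No_{n_0}(0,\Sigma_{\para_0})$.

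\textbf{Step (ii), and the main obstacle.} The penalty term is immediate: by assumption \ref{cond:clt-conv-penalty}, $\vep^{-1/2}\nabla_\drft\pen^\vep\to0$ uniformly on compact subsets of $\drpsp$, and since $\hat\drft^\vep\prt\drft_0$ it eventually lies in a fixed compact neighbourhood, so $\vep^{-1/2}\nabla_\drft\pen^\vep(\hat\drft^\vep)\prt0$. The delicate point --- what I expect to be the crux of the argument --- is the discretisation remainder: one must show $\vep^{-1/2}R^\vep\prt0$, i.e. control the error from replacing $X^\vep(s)$ by $X^\vep(\vr_\vep(s))$ inside $b(\drft_0,\cdot)$. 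Since $b$ is assumed only H\"older in $x$ (exponent $\holex_b$), not differentiable, one cannot expand $b(\drft_0,X^\vep(\cdot))$ by It\^o's formula; instead one decomposes the increment $X^\vep(s)-X^\vep(\vr_\vep(s))$ from \eqref{eq:SDE1} into its drift part, of order $\Delta(\vep)/\vep$, and its martingale part, of order $(\Delta(\vep)/\vep)^{1/2}$, bounds $\|b(\drft_0,X^\vep(s))-b(\drft_0,X^\vep(\vr_\vep(s)))\|$ by the H\"older estimate of Condition \ref{cond:SDE-coeff}, and --- crucially, to beat the loss incurred by a crude pointwise bound --- exploits the conditional orthogonality of the martingale increments across partition cells together with the uniform moment bounds on $V(X^\vep)$, as packaged in the discretisation estimates of Section \ref{sec:aux-results}. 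This is precisely where the scaling hypothesis $\Delta(\vep)/\vep^2\to0$ is consumed, and the same hypothesis underlies the estimator-swap in Step (iii). Assembling Steps (i)--(iii) in the displayed identity and solving for $\vep^{-1/2}(\hat\drft^\vep-\drft_0)$ completes the proof.
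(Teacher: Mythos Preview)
Your proposal is correct and follows the same route as the paper: the identical linearisation of \eqref{eq:lik-crit-scaled}, the same first-order Taylor expansion of $b(\cdot,x)$ in $\drft$, and the same three subtasks. A few points of comparison are worth noting.

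\textbf{Step (iii).} The paper handles the non-adapted plug-in $(\hat\drft^\vep,\hat\diff^\vep)$ in one stroke via Proposition~\ref{prop:gen-conv2}\ref{item:si-conv}: it proves tightness of the map $(\apar,t)\mapsto\int_0^t\phi(\apar,X^\vep(\vr_\vep(s)))\psi(\apar,X^\vep(s))\,dW(s)$ in $C(\SC{B}^1\times\SC{B}^2\times[0,1])$ by a Kolmogorov moment criterion, and then swaps $\hat\apar^{1,\vep}$ for $\apar^1_0$ using this uniform equicontinuity, after which the martingale CLT applies. Your surrogate $\widetilde M^\vep$ plus a chaining bound on $M^\vep-\widetilde M^\vep$ is exactly the content of that proposition, just unpacked. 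Note, however, that this swap only requires consistency and $\Delta(\vep)/\vep\to0$; the hypothesis $\Delta(\vep)/\vep^2\to0$ is \emph{not} consumed here, contrary to your last sentence.

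\textbf{Step (ii).} Here you over-engineer. The paper simply invokes Corollary~\ref{cor:diff-est-1}\ref{item:diff-est:diff-2}: the H\"older bound on $b(\drft_0,\cdot)$ together with the increment estimate of Remark~\ref{rem:x-diff-est} gives $\EE_{\para_0}\|R^\vep\|\le C\,J_0(\hat\para^\vep)\,(\Delta(\vep)/\vep)^{\holex_b/2}$, so $\vep^{-1/2}R^\vep\prt0$ follows from $\Delta(\vep)^{\holex_b}/\vep^{1+\holex_b}\to0$, which for $\holex_b=1$ is precisely $\Delta(\vep)/\vep^2\to0$. No conditional-orthogonality refinement is needed; the crude pointwise bound already meets the scaling. (Your martingale-difference idea is the content of Lemma~\ref{lem:disc-mart} and Lemma~\ref{lem:fun-diff-est-2}, which the paper develops for the \emph{diffusion} CLT where a faster rate is genuinely required, but does not invoke here.)

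\textbf{Step (i).} Same as yours; the paper writes $\mathcal I^\vep=\int_0^1 G^\vep(u)\,du$, shows $G^\vep(u)\prt\Sigma_{\para_0}$ for each $u$ via Proposition~\ref{prop:diff-est}, and passes to the limit by generalised dominated convergence using a $u$-uniform envelope built from $\drbd'_{b,0},\dibd_{a,0}$ and Corollary~\ref{cor:int-bd-V}. The final step uses Lemma~\ref{lem:aux-conv} to invert.
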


We now consider the estimation of diffusion parameter $\diff$ in this scaling regime for the two forms mentioned in Section \ref{sec:diff-est}. Similar to the discretized quadratic variation of $X$ introduced in \eqref{eq:quad-disc-X}, for each $\vep>0$, let $\mqd{X^\vep}^{D,\vr_\vep}$ denote the discretized matrix-quadratic variation of the process $X^\vep$ defined by
 \begin{align}\label{eq:quad-disc-X-ep}
 \mqd{X^\vep}^{D,\vr_\vep}_t \dfeq \sum_{i=0}^{[t/\Delta]-1} (X^\vep(t_{i+1}) - X^\vep(t_i))(X^\vep(t_{i+1}) - X^\vep(t_i))^T.
 \end{align}
It is immediate that  
 $$\mqd{X}^{D,\tilde\vr_\vep}_{\sbullet/\vep} \ \eqd \ \mqd{X^\vep}^{D,\vr_\vep}_{\sbullet}.$$
Hence for Form 1 of $\dffun$, specifically, for $\dffun(\diff,x) = \lf(a_0(x)\diff\ri)^{1/2}$,
\begin{align}\label{eq:diff-est-1-sc}
\begin{aligned}
\hat\diff^\vep(\BX_{\tilde t_0:\tilde t_m}) = &\ \lf(\int_0^{\vep^{-1}} a_0(X(\tilde\vr_\vep(s)))ds\ri)^{-1}\mqd{X}^{D,\tilde\vr_\vep}_{\vep^{-1}}\\
\eqd&\ \lf(\vep^{-1}\int_0^{1} a_0(X^\vep(\vr_\vep(s)))ds\ri)^{-1}\mqd{X^\vep}^{D,\vr_\vep}_1 \equiv \hat\diff^\vep(\BX^\vep_{ t_0: t_m}),
\end{aligned}
\end{align}
while for Form 2 of $\dffun$, that is, for $\dffun(\diff,x) = \dffun_0(x)\kappa$ with $\diff=\kappa\kappa^T$,
\begin{align}\label{eq:diff-est-2-sc}
\begin{aligned}
\ve_{d}\lf(\hat \diff^\vep(\BX_{\tilde t_0:\tilde t_m})\ri) =&\ \lf(\int_0^{\vep^{-1}} \dffun_0(X(\tilde\vr_\vep(s))) \ot \dffun_0(X(\tilde\vr_\vep(s)))ds\ri)^{-1} \ve_{d} \lf(\mqd{X}^{D,\tilde\vr_\vep}_{\vep^{-1}}\ri)\\
\eqd&\ \lf(\vep^{-1}\int_0^{1} \dffun_0(X^\vep(\vr_\vep(s))) \ot \dffun_0(X^\vep(\vr_\vep(s)))ds\ri)^{-1} \ve_{d} \lf(\mqd{X^\vep}^{D,\vr_\vep}_{1}\ri)\\
\equiv &\ \ve_d\lf(\hat\diff^\vep(\BX^\vep_{ t_0: t_m})\ri).
\end{aligned}
\end{align}

As in the case of drift estimation, it is again obvious that proving Theorem \ref{th:const-diff-0} and Theorem \ref{th:clt-diff-0} is equivalent to showing that those respective assertions hold for $\hat \diff^\vep \equiv \hat\diff^\vep(\BX^\vep_{ t_0: t_m})$ with $\tilde \Delta(\vep)$ replaced by $\Delta(\vep)/\vep$ in the conditions of those theorems.

\section{Auxiliary Results}\label{sec:aux-results}

\begin{proposition}\label{prop:int-bd}
	Suppose that  $ X^\vep$ satisfies \eqref{eq:SDE1} and that $\Delta(\vep)/\vep \rt 0$. Assume that 
 \begin{itemize}
 \item the drift and the diffusion functions, $b$ and $a=\dffun\dffun^T$, are locally bounded;
 \item Condition \ref{cond:SDE}-\ref{cond:item:growth-rec} \& \ref{cond:item:growth-lyap-a}  hold for $\para_0 =(\drft_0,\diff_0) \in \drpsp\times R^{d\times d}_{>0}$. 
\end{itemize}
 Then for all $p\geq 0$
	$$\sup_{0<\vep \leq 1} \EE_{\para_0}\lf[\int_0^1 V^p(X^\vep(t) dt \ri] <\infty. $$
\end{proposition}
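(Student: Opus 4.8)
\emph{Proof strategy.} This will be a uniform-in-$\vep$ Lyapunov (Has'minskii-type) argument, and it does not in fact use the discretization hypothesis $\Delta(\vep)/\vep\rt 0$: the continuous process $X^\vep$ in \eqref{eq:SDE1} does not see the partition $\{t_i\}$, and the estimate comes entirely from the recurrence encoded in Condition~\ref{cond:SDE}:\ref{cond:item:growth-rec}--\ref{cond:item:growth-lyap-a}. The case $p=0$ is trivial, so fix $p>0$. The generator of $X^\vep$ is $\vep^{-1}\SC{L}_{\para_0}$, where $\SC{L}_{\para_0}f(x)=b(\drft_0,x)\cdot\nabla f(x)+\f12\,\mathrm{tr}\!\lf(a(\diff_0,x)D^2f(x)\ri)$; since $V$ is $C^2$ and $V\geq 1$, It\^o's formula applies to $V^{p+1}(X^\vep(\cdot))$ and its drift term is
\begin{equation*}
\SC{L}_{\para_0}V^{p+1}=(p+1)V^p\,\SC{L}_{\para_0}V+\f{p(p+1)}{2}\,V^{p-1}\,\nabla^TV\,a(\diff_0,\cdot)\,\nabla V .
\end{equation*}

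The crux is to upgrade this identity to a \emph{global} drift inequality
\begin{equation}\label{eq:int-bd-lyap}
\SC{L}_{\para_0}V^{p+1}(x)\ \leq\ -c_p\,V^p(x)+\tilde C,\qquad x\in\R^d,
\end{equation}
with constants $c_p>0$ and $\tilde C\geq 0$ depending only on $p,b,a,V$ (never on $\vep$). To obtain it, I would apply Remark~\ref{rem:SDE-cond}:\ref{rem:SDE-diff} with the free constant there equal to $p$: on $\{\|x\|\geq R\}$, where $R\dfeq R_p(\para_0)\geq R_0(\para_0)$, one has both $\nabla^TV\,a(\diff_0,\cdot)\,\nabla V\leq -p^{-1}V\,\SC{L}_{\para_0}V$ and $\SC{L}_{\para_0}V\leq-\beta_*(\para_0)/2<0$ (the latter by Remark~\ref{rem:SDE-cond}:\ref{rem:SDE-gen}, since $R_p\geq R_0$). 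Substituting, the second-order term is absorbed:
\begin{equation*}
\SC{L}_{\para_0}V^{p+1}\ \leq\ (p+1)V^p\SC{L}_{\para_0}V-\f{p+1}{2}V^p\SC{L}_{\para_0}V\ =\ \f{p+1}{2}V^p\SC{L}_{\para_0}V\ \leq\ -\f{(p+1)\beta_*(\para_0)}{4}\,V^p
\end{equation*}
on $\{\|x\|\geq R\}$. On the compact set $\{\|x\|\leq R\}$, local boundedness of $b$ and $a$ together with continuity of $V,\nabla V,D^2V$ make $\SC{L}_{\para_0}V^{p+1}$ bounded above and $V^p$ bounded, so adding a sufficiently large constant yields \eqref{eq:int-bd-lyap} with $c_p=(p+1)\beta_*(\para_0)/4$.

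With \eqref{eq:int-bd-lyap} in hand the remainder is routine localization. Let $\tau_n\dfeq\inf\{t\geq 0:\|X^\vep(t)\|\geq n\}$, so that $\tau_n\uparrow\infty$ a.s.\ by non-explosion of $X^\vep$. Since the stochastic-integral integrand is bounded on $[0,\tau_n]$, It\^o's formula and optional stopping give
\begin{equation*}
\EE_{\para_0}\!\lf[V^{p+1}\!\lf(X^\vep(t\wedge\tau_n)\ri)\ri]=V^{p+1}(x_0)+\f1\vep\,\EE_{\para_0}\!\lf[\int_0^{t\wedge\tau_n}\SC{L}_{\para_0}V^{p+1}\!\lf(X^\vep(s)\ri)ds\ri].
\end{equation*}
Inserting \eqref{eq:int-bd-lyap}, dropping the nonnegative left-hand side, and rearranging,
\begin{equation*}
\EE_{\para_0}\!\lf[\int_0^{t\wedge\tau_n}V^p\!\lf(X^\vep(s)\ri)ds\ri]\ \leq\ \f{\vep}{c_p}V^{p+1}(x_0)+\f{\tilde C}{c_p}\,t .
\end{equation*}
Letting $n\rt\infty$ (monotone convergence), taking $t=1$, and using $0<\vep\leq 1$ bounds the left side by $c_p^{-1}\lf(V^{p+1}(x_0)+\tilde C\ri)$, which is independent of $\vep$; taking the supremum over $\vep$ finishes the proof.

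The only genuinely delicate point is \eqref{eq:int-bd-lyap}: one has to choose the free constant in Remark~\ref{rem:SDE-cond}:\ref{rem:SDE-diff} large relative to $p$ so that the second-order term $\f{p(p+1)}{2}V^{p-1}\nabla^TV\,a\,\nabla V$ is dominated by a fraction of the strictly negative term $(p+1)V^p\SC{L}_{\para_0}V$ — this is precisely where Condition~\ref{cond:SDE}:\ref{cond:item:growth-lyap-a} (the $o(V|\SC{L}_\theta V|)$ bound) enters, and it is what makes the argument work for arbitrarily large $p$. Everything else — the compact-set estimates, the localization, and the passage $n\rt\infty$ — is standard.
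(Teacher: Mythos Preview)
Your proof is correct and follows essentially the same approach as the paper: apply It\^o's formula to $V^{p+1}(X^\vep(\cdot))$ and use Condition~\ref{cond:SDE}:\ref{cond:item:growth-lyap-a} (via Remark~\ref{rem:SDE-cond}) to absorb the second-order term $\tfrac{p(p+1)}{2}V^{p-1}\nabla^TV\,a\,\nabla V$ into a fraction of the negative drift $(p+1)V^p\SC{L}_{\para_0}V$ outside a compact set. Your packaging of this as the global inequality \eqref{eq:int-bd-lyap} is slightly cleaner than the paper's direct splitting, and your inclusion of the localization $\tau_n$ makes explicit a step the paper leaves implicit; your observation that $\Delta(\vep)/\vep\rt 0$ plays no role is also correct.
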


\begin{proof} 

Let $p\geq 1$. By It\^o's lemma for $0\leq t \leq 1$,
	\begin{align} \non
		V^{p+1}(X^\vep(t)) =&\ V^{p+1}(x_0)+\int_0^t (p+1) V^{p}(X^\vep(s))\nabla^TV(X^\vep(s))dX^\vep(s)\\ \non
		&\ + \tr \int_0^t (p+1)V^{p}(X^\vep(s))D^2V(X^\vep(s))d[X^\vep](s)\\ \non
		&\ +\tr \int_0^t (p+1)pV^{p-1}(X^\vep(s))\nabla V(X^\vep(s))\nabla^TV(X^\vep(s))d\mqd{X^\vep}(s)\\ \non
		=&\ V^{p+1}(x_0)+\vep^{-1/2}\mart^\vep_p(t)+ \vep^{-1}\int_0^t (p+1) V^{p}(X^\vep(s)) \SC{L}_{\para_0}V(X^\vep(s)) ds\\ \label{eq:v-ito}
		&\ + \vep^{-1}\tr \int_0^t (p+1)pV^{p-1}(X^\vep(s))\nabla V(X^\vep(s))\nabla^TV(X^\vep(s))a(\diff_0, X^\vep(s))ds,
	\end{align}
where $\mart^\vep_p(t) \equiv \int_0^t  (p+1) V^{p}(X^\vep(s))\nabla^TV(X^\vep(s))\dffun(\diff_0,X^\vep(s))dW(s)$
is a martingale.  By splitting  the last term (without the factor $\vep^{-1}$) according as $\|X^\vep(s)\| >R_{2p}(\theta_0)$ (c.f. Remark \ref{rem:SDE-cond})  or not, it can be estimated as follows:
\begin{align*}
A^\vep(t) \equiv&\ \tr \int_0^t (p+1)pV^{p-1}(X^\vep(s))\nabla V(X^\vep(s))\nabla^TV(X^\vep(s))a(\diff_0, X^\vep(s))ds\\
= &\  \int_0^t (p+1)pV^{p-1}(X^\vep(s))\nabla^TV(X^\vep(s))a(\diff_0,X^\vep(s))\nabla V(X^\vep(s)) ds \\
\leq &\ \const_{\ref*{prop:int-bd},0}(p) - \f{1}{2}\int_0^t (p+1)V^{p}(X^\vep(s))\SC{L}_{\para_0} V(X^\vep(s))1_{\{\|X^\vep(s)\| \geq R_{2p}(\para_0)\}}
\end{align*}
for some constant, $\const_{\ref*{prop:int-bd},0}(p)$. Thus rearranging and multiplying multiplying throughout by $\vep$ we get,
\begin{align*}
-\f{1}{2}\EE_{\para_0}\int_0^t  (p+1) V^{p}&(X^\vep(s)) \SC{L}_{\para_0}V(X^\vep(s))  1_{\{\|X^\vep(s)\| \geq R_{2p}(\para_0)\}} \leq\  \vep V^{p+1}(x_0)+ \const_{\ref*{prop:int-bd},0}(p).
\end{align*}
Now splitting the integral on the left according as $\|X^\vep(s)\| >R_{0}(\para_0) \vee R_{2p}(\para_0) $ we get (see Remark \ref{rem:SDE-cond}-\ref{rem:SDE-gen})   
\begin{align*}
\EE_{\para_0}\int_0^1   V^{p}(X^\vep(s)) 1_{\{\|X^\vep(s)\| \geq R_{0}(\para_0) \vee R_{2p}(\para_0)\}} ds \leq  \f{2( V^{p+1}(x_0)+ \const_{\ref*{prop:int-bd},0}(p))}{(p+1)\lfun_*(\theta_0)}.
\end{align*}
This proves the assertion.
\end{proof}	

\subsection{Auxiliary Lemma on $\psi$ and $\phi$}
Unless otherwise specified, throughout we will fix the parameter value, $\para_0 = (\drft_0,\diff_0)$, and work on the probability space $(\Omega, \SC{F}, \PP_{\para_0})$.
%
%
%
%

\begin{lemma} \label{lem:fun-diff-est}
Suppose that  $ X^\vep$ satisfies \eqref{eq:SDE1}. Assume that Condition \ref{cond:SDE}-\ref{cond:item:growth-rec} \& \ref{cond:item:growth-lyap-a}   and Condition \ref{cond:SDE-coeff}: \ref{cond:b-growth}-\hyperlink{subitem-a}{(a)} \& \ref{cond:sig-growth}-\hyperlink{subitem-s-a}{(a)} hold for $\drft_0, \diff_0$. 
Let $\psi:\R^{d} \rt \R^{d_1}$  be a twice differentiable function  satisfying the following growth condition:
$$\max\lf\{ \|D\psi(x)\|, \|D^2\psi(x)\|\ri\}  \leq J_0 V(x)^{\gexp_0}$$
for some constant $J_0>0$ and exponent $\gexp_1\geq 0$.
Then for any $m>0$ the following hold: for some constant $\consta_{1,m}$ 
$$\EE_{\para_0}\lf[\int_0^1\|\psi(X^\vep(s)) - \psi(X^\vep(\vr_\vep(s))) \|^m ds\ri] \leq \consta_{1,m}  (\Delta(\vep)/\vep)^{m/2}.$$
\end{lemma}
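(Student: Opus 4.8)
The plan is to control the increment $\psi(X^\vep(s)) - \psi(X^\vep(\vr_\vep(s)))$ via Taylor expansion and the SDE dynamics, then take expectations using the moment bound from Proposition \ref{prop:int-bd}. First, write $r = \vr_\vep(s)$, so that $s - r \leq \Delta(\vep)$, and apply the fundamental theorem of calculus (or It\^o's formula applied to $\psi$) between $r$ and $s$:
\begin{align*}
\psi(X^\vep(s)) - \psi(X^\vep(r)) = &\ \f{1}{\vep}\int_r^s D\psi(X^\vep(u))b(\drft_0, X^\vep(u))\, du + \f{1}{\vep}\int_r^s \tfrac{1}{2}\tr\lf(D^2\psi(X^\vep(u)) a(\diff_0, X^\vep(u))\ri) du\\
&\ + \f{1}{\sqrt\vep}\int_r^s D\psi(X^\vep(u))\dffun(\diff_0, X^\vep(u))\, dW(u).
\end{align*}
Raising to the $m$-th power and using the elementary inequality $|a+b+c|^m \leq 3^{m-1}(|a|^m + |b|^m + |c|^m)$ splits the estimate into a drift-type term (the first two integrals, carrying a factor $\vep^{-1}$ over an interval of length at most $\Delta$, so heuristically of size $(\Delta/\vep)^m$) and a martingale term (carrying $\vep^{-1/2}$, contributing the square-root scaling $(\Delta/\vep)^{m/2}$ after applying Burkholder--Davis--Gundy). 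Since $\Delta/\vep \to 0$, the drift term is of smaller order, so the dominant contribution is indeed $(\Delta/\vep)^{m/2}$, matching the claimed bound.

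For the martingale term, I would condition on $\salg_r$ and apply the Burkholder--Davis--Gundy inequality to the stochastic integral $\int_r^\sbullet D\psi(X^\vep(u))\dffun(\diff_0, X^\vep(u))\, dW(u)$ over $[r,s]$, bounding its $m$-th moment by a constant times $\EE_{\para_0}\lf[\lf(\int_r^s \|D\psi(X^\vep(u))\dffun(\diff_0,X^\vep(u))\|^2\, du\ri)^{m/2}\ri]$. Using the growth bounds $\|D\psi(x)\| \leq J_0 V(x)^{\gexp_0}$ (Condition on $\psi$) and $\|\dffun(\diff_0,x)\| \leq \dibd_{\dffun,0}(\diff_0) V(x)^{\sexp_{\dffun,0}}$ (Condition \ref{cond:SDE-coeff}:\ref{cond:sig-growth}-\hyperlink{subitem-s-a}{(a)}), together with H\"older's inequality in time (interval length $\leq \Delta$), the integrand is dominated by $C\, V(X^\vep(u))^{2(\gexp_0+\sexp_{\dffun,0})}$, yielding a bound of order $\vep^{-m/2}\Delta^{m/2 - 1}\int_r^s V(X^\vep(u))^{m(\gexp_0+\sexp_{\dffun,0})}\, du$. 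An entirely analogous estimate handles the drift term using Condition \ref{cond:SDE-coeff}:\ref{cond:b-growth}-\hyperlink{subitem-a}{(a)}, producing $\vep^{-m}\Delta^{m-1}\int_r^s V^{m(\gexp_0 + \bexp_{b,0})}\, du$ plus a similar $D^2\psi$-term. Summing (integrating) these pointwise-in-$s$ bounds over $s\in[0,1]$ — being careful that for each $s$ the inner integral runs over the containing mesh cell $[r,r+\Delta]$, so that after swapping the order of integration each $V^\ast(X^\vep(u))$ is weighted by at most $\Delta$ — gives an overall bound of the form $C\lf[(\Delta/\vep)^{m/2} + (\Delta/\vep)^m\ri]\int_0^1 V(X^\vep(u))^{q}\, du$ for a suitable exponent $q = m\max\{\gexp_0+\sexp_{\dffun,0}, \gexp_0+\bexp_{b,0}\}$ (taking $\Delta \leq 1$ and $\vep \leq 1$).

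Finally, take $\EE_{\para_0}$ of this bound and invoke Proposition \ref{prop:int-bd}, which gives $\sup_{0<\vep\leq 1}\EE_{\para_0}\lf[\int_0^1 V^q(X^\vep(u))\, du\ri] < \infty$ for every $q \geq 0$ — this is exactly what is needed to absorb the $V$-moments. Since $\Delta/\vep \to 0$, we have $(\Delta/\vep)^m \leq (\Delta/\vep)^{m/2}$ for small $\vep$, so the two terms collapse to $\consta_{1,m}(\Delta(\vep)/\vep)^{m/2}$, as claimed. The main obstacle I anticipate is purely bookkeeping: carefully organizing the double integral (over $s$ and over the mesh cell containing $s$), justifying the Fubini swap, and tracking the exponents of $V$ so that the final exponent $q$ is finite and the polynomial growth conditions are enough — there is no genuine analytic difficulty once Proposition \ref{prop:int-bd} is in hand. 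One should also handle $0 < m < 1$ separately (or just by Jensen/H\"older reducing to the $m=1$ or $m=2$ case) since the $|a+b+c|^m$ inequality and BDG take a slightly different form there; alternatively prove it for $m$ an even integer and interpolate down.
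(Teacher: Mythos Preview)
Your proposal is correct and follows essentially the same approach as the paper: apply It\^o's formula to $\psi$ to split the increment into a drift piece (the paper packages your two Lebesgue integrals as $\SC{L}_{\para_0}\psi$) and a martingale piece, bound the drift by H\"older plus the swap-of-integration trick (Lemma~\ref{lem:simp-int-bd}) and the martingale by BDG, then absorb the resulting $V$-moments via Proposition~\ref{prop:int-bd}; the paper also treats $m\geq 1$ first and reduces $0<m<1$ to it, exactly as you suggest.
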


\begin{proof}

We only consider the case $m\geq 1$. The assertion for $0<m<1$ easily follows from the previous case.
First observe that the condition on $\psi$ together with the growth assumptions on $b$ and $a$ (see  Condition \ref{cond:SDE-coeff}: \ref{cond:b-growth}-\hyperlink{subitem-a}{(a)} \& \ref{cond:sig-growth}-\hyperlink{subitem-s-a}{(a)}) implies that for some constant $\const_{\ref*{lem:fun-diff-est},0}$ and exponent $q_1 = \gexp_0(\bexp_{b,0}+2\sexp_{\dffun,0})$
\begin{align}\label{eq:mmt-gen}
\|\SC{L}_{\para_0} \psi(x)\|\leq \const_{\ref*{lem:fun-diff-est},0} V(x)^{\gexp_1},
\end{align}
where 
$\SC{L}_{\para_0}\psi(x) = \lf(\SC{L}_{\para_0}\psi_1(x)), \SC{L}_{\para_0}\psi_2(x), \hdots, \SC{L}\psi_{d_1}(x)\ri)^T.$
Now to estimate the term, $\SC{R}^{m,\vep}_0 \equiv  \int_0^ 1 \|\psi(X^\vep(s)) - \psi(X^\vep(\vr_\vep(s)))\|^m\ ds,$ simply  observe that by It\^o's lemma (applied to each component of $\psi$),
\begin{align}\label{eq:psi-ito}
\psi(X^\vep(s)) - \psi(X^\vep(\vr_\vep(s))) = \f{1}{\vep}\int_{\vr_\vep(s)}^s \SC{L}_{\para_0}\psi(X^\vep(r))\ dr+\f{1}{\sqrt \vep} \int_{\vr_\vep(s)}^s D \psi(X^\vep(r)) \s(X^\vep(r)) dW(r)
\end{align}
 By H\"older's inequality, Lemma \ref{lem:simp-int-bd} and the assumptions on $\psi, b$ and $\dffun$, the quantity
$\SC{A}^{m,\vep} \equiv \int_0^1 \lf\| \int_{\vr_\vep(s)}^s \SC{L}_{\para_0}\psi(X^\vep(r))\ dr\ri\|^m \ ds$ can be estimated as
\begin{align*}
\SC{A}^{m,\vep} \leq &\ \Delta^{m-1}  \int_0^1 \int_{\vr_\vep(s)}^s \lf\|\SC{L}_{\para_0}\psi(X^\vep(r))\ri\|^m\ dr \ ds \ 
\leq  \Delta^{m} \int_0^1  \lf\|\SC{L}_{\para_0}\psi(X^\vep(s))\ri\|^m \ ds\\
\leq &\ \Delta^m \const_{\ref*{lem:fun-diff-est},1,m} \int_0^1V(X^\vep(s))^{m\gexp_1} \ ds.
\end{align*}
Similar calculations along with an additional use of Burkholder-Davis-Gundy (BDG) inequality show that the term $M^\vep_0 = \int_0^1\lf\|\int_{\vr_\vep(s)}^s D\psi(X^\vep(r)) \dffun(\diff, X^\vep(r)) dW(r)\ri\|^m ds $ can be estimated as
\begin{align*}
\EE_{\para_0}\lf[M^\vep_0\ri] \leq&\  \Delta^{m/2} \const_{\ref*{lem:fun-diff-est},2,m}  \EE_{\para_0} \int_0^1V(X^\vep(s))^{m\gexp_2} \ ds
\end{align*}
for some constant $\const_{\ref*{lem:fun-diff-est},2,m}$ and exponent $\gexp_2 = 2\gexp_0 \sexp_{\dffun,0}$. 
Thus by Proposition \ref{prop:int-bd} we get from \eqref{eq:psi-ito} that 
\begin{align*}
\EE_{\para_0}[\SC{R}_0^{m,\vep} ] \leq&\ \const_{\ref*{lem:fun-diff-est},3,m} \lf(\f{\Delta}{\vep}\ri)^{m/2}\EE_{\para_0} \int_0^1V(X^\vep(s))^{m(\gexp_1\vee \gexp_2)} \ ds 
\leq  \consta_{1,m} \lf(\f{\Delta}{\vep}\ri)^{m/2},
\end{align*}
where $\const_{\ref*{lem:fun-diff-est},3,m}$ and $ \consta_{1,m}$ are suitable constants.  
\end{proof}

\begin{remark}\label{rem:x-diff-est} {\rm
In particular, the above result holds for $\psi(x) =x$, that is, under Condition \ref{cond:SDE}-\ref{cond:item:growth-rec} \& \ref{cond:item:growth-lyap-a}   and Condition \ref{cond:SDE-coeff}: \ref{cond:b-growth}-\hyperlink{subitem-a}{(a)} \& \ref{cond:sig-growth}-\hyperlink{subitem-s-a}{(a)}, for any $m>0$,
$$\EE_{\para_0}\lf[\int_0^1\|X^\vep(s) - X^\vep(\vr_\vep(s)) \|^m ds\ri] \leq  \consta_{1,m} (\Delta/\vep)^{m/2},$$
for some constant $\consta_{1,m}$.
}
\end{remark}

The following result immediately follows from Proposition \ref{prop:int-bd} and Lemma \ref{lem:fun-diff-est}. 
%
\begin{corollary}\label{cor:int-bd-V}
Suppose that Condition \ref{cond:SDE}: \ref{cond:item:growth-rec} - \ref{cond:item:growth-lyap-b} and Condition \ref{cond:SDE-coeff}:\ref{cond:b-growth}-\hyperlink{subitem-a}{(a)} \& \ref{cond:sig-growth}-\hyperlink{subitem-s-a}{(a)} hold. 
Then for any exponent $p>0$ and $\para_0 \in \Theta$
\begin{align*}
	\sup_{0<\vep\leq 1} \EE_{\para_0}\lf[\int_0^T V^p(X^\vep(\vr_\vep(t))) dt \ri] <\infty.
\end{align*}
In particular, 
$$V_{p, \para_0}^* \dfeq  \sup_{0<\vep \leq 1} \lf(\EE_{\para_0}\lf[\int_0^1 V^p(X^\vep(t) dt \ri] \vee \EE_{\para_0}\lf[\int_0^T V(X^\vep(\vr_\vep(t)))^p dt \ri]\ri) < \infty. $$
\end{corollary}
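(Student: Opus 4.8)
The plan is to deduce this directly from Proposition \ref{prop:int-bd} together with Lemma \ref{lem:fun-diff-est} applied to $\psi = V^p$. The only preliminary point to settle is that $V^p$ is an admissible choice of $\psi$ in Lemma \ref{lem:fun-diff-est}. Since $V\geq 1$, the chain rule gives $D(V^p) = p\,V^{p-1}\nabla V$ and $D^2(V^p) = p(p-1)V^{p-2}\nabla V\,\nabla^T V + p\,V^{p-1}D^2 V$, so Condition \ref{cond:SDE}:\ref{cond:item:growth-lyap-b}, i.e. $\max\{\|\nabla V(x)\|,\|D^2V(x)\|\} \leq J_{V,0}V(x)^{\gexp_{V,0}}$, yields
$$\max\big\{\|D(V^p)(x)\|,\ \|D^2(V^p)(x)\|\big\} \leq J_0\,V(x)^{\gexp_0}$$
for suitable $J_0>0$ and $\gexp_0\geq 0$ depending only on $p$, $J_{V,0}$ and $\gexp_{V,0}$; all exponents of $V$ here are finite precisely because $V\geq 1$. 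The remaining hypotheses of Lemma \ref{lem:fun-diff-est} (Condition \ref{cond:SDE}:\ref{cond:item:growth-rec}, \ref{cond:item:growth-lyap-a} and Condition \ref{cond:SDE-coeff}:\ref{cond:b-growth}-(a), \ref{cond:sig-growth}-(a)) are among those assumed in the statement.

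Next I would take $m=1$ in Lemma \ref{lem:fun-diff-est}, which produces a constant $\consta_{1,1}$ with
$$\EE_{\para_0}\Big[\int_0^1 \big|V^p(X^\vep(s)) - V^p(X^\vep(\vr_\vep(s)))\big|\,ds\Big] \leq \consta_{1,1}\,(\Delta(\vep)/\vep)^{1/2}.$$
Since $\Delta(\vep)/\vep$ stays bounded on $(0,1]$ (indeed $\Delta(\vep)/\vep\to 0$, which is in force as it underlies Proposition \ref{prop:int-bd}), the right-hand side is bounded uniformly in $\vep$. Combining this with the pointwise bound $V^p(X^\vep(\vr_\vep(s))) \leq V^p(X^\vep(s)) + |V^p(X^\vep(s)) - V^p(X^\vep(\vr_\vep(s)))|$, then integrating in $s$ and taking expectations, the first term is controlled uniformly in $\vep\in(0,1]$ by Proposition \ref{prop:int-bd} and the second by the display above. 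This gives
$$\sup_{0<\vep\leq 1}\EE_{\para_0}\Big[\int_0^1 V^p(X^\vep(\vr_\vep(t)))\,dt\Big] < \infty,$$
and the same argument over $[0,T]$ — the auxiliary estimates above go through verbatim on any fixed finite horizon — yields the displayed inequality of the statement. Finiteness of $V^*_{p,\para_0}$ is then immediate, as it is by definition the maximum of the supremum just obtained and the one supplied directly by Proposition \ref{prop:int-bd}.

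I do not expect a genuine obstacle here: the corollary is essentially bookkeeping layered on the two preceding results. The only step requiring minor attention is the derivative-growth verification for $V^p$ from Condition \ref{cond:SDE}:\ref{cond:item:growth-lyap-b}, along with the harmless observation that all powers of $V$ that appear stay bounded below by $1$, so no negative-exponent issues arise.
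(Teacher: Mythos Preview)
Your proposal is correct and takes essentially the same approach as the paper, which simply states that the corollary ``immediately follows from Proposition \ref{prop:int-bd} and Lemma \ref{lem:fun-diff-est}'' without further detail. You have filled in precisely the verification the paper omits, namely that Condition \ref{cond:SDE}:\ref{cond:item:growth-lyap-b} is what makes $\psi=V^p$ admissible in Lemma \ref{lem:fun-diff-est}, after which the triangle-inequality combination with Proposition \ref{prop:int-bd} is routine.
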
	

Notice that the above corollary, which needed $V$ to satisfy the additional assumption of Condition \ref{cond:SDE}-\ref{cond:item:growth-lyap-b} compared to Lemma \ref{lem:fun-diff-est}, helps to extend that lemma to cover the functions $\psi$ which might not be differentiable but satisfy a local H\"older type condition. This is summarized in the following corollary whose proof  is an immediate consequence of Corollary \ref{cor:int-bd-V}, Remark \ref{rem:x-diff-est} and H\"older's inequality.
 In fact, the corollary below considers functions $\psi$ and $\phi$ that depend on a parameter $\apar$ and notes the dependency of the bounding constants on the parameter $\apar$.  This can be easily done by tracking the constants in the proof of Proposition \ref{prop:int-bd} and Lemma \ref{lem:fun-diff-est}. Such estimates involving functions depending on a parameter are necessary for establishing our main results. 

\begin{corollary}\label{cor:diff-est-1}
Let $\phi: \R^{d'}\times \R^d \rt \R^{d_0 \times d_1}$ and $\psi: \R^{d'}\times\R^{d} \rt \R^{d_1}$ be two functions. Suppose that Condition \ref{cond:SDE}: \ref{cond:item:growth-rec} - \ref{cond:item:growth-lyap-b} and Condition \ref{cond:SDE-coeff}:\ref{cond:b-growth}-\hyperlink{subitem-a}{(a)} \& \ref{cond:sig-growth}-\hyperlink{subitem-s-a}{(a)} hold. Fix a parameter  $\apar \in \R^{d'}$.

\begin{enumerate}[label=(\roman*), ref=(\roman*)]
\item \label{item:bd-est:bd-1} Suppose $\phi$ and $\psi$ satisfy the following growth condition: for a function  $J_0: \R^{d'} \rt [0,\infty)$,
$$\max\lf\{\|\phi(\apar, x)\|,\|\psi(\apar, x)\|\ri\}  \leq J_0(\apar) V(x)^{\gexp_0}.$$
Then for some constant (not depending on $\apar$) $\consta^1_{m,m_0}\geq 0$,
 \begin{align*}
&\ \EE_{\para_0}\lf[\int_0^1\|\phi(\apar, X^\vep(\vr_\vep(s)))\|^{m_0}\|\psi(\apar, X^\vep(s))\|^m ds\ri] \leq \consta^1_{m,m_0} J_0^{m+m_0}(\apar).
\end{align*}

\item \label{item:diff-est:diff-1} Suppose $\phi$ satisfies the growth condition  in \ref{item:bd-est:bd-1}, and $\psi$ satisfies the following H\"older type condition:  for some function   $J_1: \R^{d'} \rt [0,\infty)$, H\"older exponent $0<\holex_{\psi}\leq 1,$ and growth exponent $\gexp_1\geq 0$ 


$$ \|\psi(\apar, x) - \psi(\apar, x')\| \leq  J_1(\apar) (V(x)^{\gexp_1}+V(x')^{\gexp_1})\|x-x'\|^{\holex_{\psi}}.$$

\vs{.2cm}
\np
Then for any $m>0, m_0>0$,
 \begin{align*}
 \EE_{\para_0}\Big[\int_0^1\|\phi(\apar, X^\vep(\vr_\vep(s)))\|^{m_0} & \|\psi(\apar, X^\vep(s))- \psi(\apar, X^\vep(\vr_\vep(s)))\|^m ds\Big]\\
 \leq &\ \consta^2_{m,m_0} J^{m_0}_0(\apar)J_1^{m}(\apar)(\Delta/\vep)^{m\holex_{\psi}/2},
\end{align*}
where $\consta^2_{m,m_0} \geq 0$ is a constant not depending on $\apar$.

\item \label{item:diff-est:diff-2}  Let $\SC{B} \subset \R^{d'}$ be open, and suppose $\{\tilde \apar^\vep\}$ is a family of $\R^{d'}$-valued random variables such that $\tilde \apar^\vep \prt \tilde \apar^0$ as $\vep \rt 0$, where the limiting random variable $\tilde \apar^0$ takes values in $\SC{B}$. Suppose that for any $\apar' \in \SC{B}$, $\phi$ satisfies the growth condition  in \ref{item:bd-est:bd-1} and $\psi$ satisfies the H\"older type condition in \ref{item:diff-est:diff-1}, for a.a $\om$, the mapping $\apar' \in \SC{B} \rt J_0(\apar')$ is locally bounded. 
 Further assume that $\Delta(\vep)$ is such that $\Delta(\vep)^{\holex_{\psi}}/\vep^{1+\holex_{\psi}} \rt 0$.
Then  as $\vep \rt 0,$
\begin{align*}
\vep^{-1/2}\int_0^1\|\phi(\tilde \apar^\vep, X^\vep(\vr_\vep(s)))\|\|\psi(\apar, X^\vep(s))- \psi(\apar, X^\vep(\vr_\vep(s)))\| ds \prt 0.
\end{align*}
\end{enumerate}
\end{corollary}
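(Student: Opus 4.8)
The plan is to combine the moment bound from part \ref{item:diff-est:diff-1} with the localization argument that handles the random parameter $\tilde\apar^\vep$. First I would fix a compact neighborhood $\SC{K} \subset \SC{B}$ of a ball around the (random but $\SC{B}$-valued) limit $\tilde\apar^0$; more precisely, since $\tilde\apar^\vep \prt \tilde\apar^0$ with $\tilde\apar^0 \in \SC{B}$ a.s., for any $\eta>0$ one can choose a compact $\SC{K} \subset \SC{B}$ such that $\PP_{\para_0}(\tilde\apar^0 \in \SC{K}^\circ) > 1-\eta$, and then $\PP_{\para_0}(\tilde\apar^\vep \in \SC{K}) \to 1$. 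On the event $\{\tilde\apar^\vep \in \SC{K}\}$ the integrand is dominated using $\sup_{\apar' \in \SC{K}} J_0(\apar') =: J_{0,\SC{K}} < \infty$ (finite by the assumed local boundedness of $\apar'\mapsto J_0(\apar')$), which removes the randomness in the parameter slot of $\phi$; the parameter $\apar$ in $\psi$ is fixed throughout and is not the issue.

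The key estimate: on $\{\tilde\apar^\vep\in\SC{K}\}$,
\begin{align*}
\vep^{-1/2}\int_0^1\|\phi(\tilde\apar^\vep, X^\vep(\vr_\vep(s)))\|\,\|\psi(\apar, X^\vep(s))-\psi(\apar,X^\vep(\vr_\vep(s)))\|\,ds \leq J_{0,\SC{K}}\,\vep^{-1/2}\,\SC{I}^\vep,
\end{align*}
where $\SC{I}^\vep \dfeq \int_0^1 V(X^\vep(\vr_\vep(s)))^{\gexp_0}\|\psi(\apar,X^\vep(s))-\psi(\apar,X^\vep(\vr_\vep(s)))\|\,ds$. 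Taking expectations and applying part \ref{item:diff-est:diff-1} with $\phi$ replaced by the deterministic function $x\mapsto V(x)^{\gexp_0}$ (so $m_0=1$, $m=1$, $J_0\equiv 1$, $J_1$ a constant since $\apar$ is fixed) gives $\EE_{\para_0}[\SC{I}^\vep] \leq \consta^2_{1,1}(\Delta/\vep)^{\holex_\psi/2}$. Hence $\vep^{-1/2}\EE_{\para_0}[\SC{I}^\vep] \leq \consta^2_{1,1}\,\Delta^{\holex_\psi/2}\vep^{-(1+\holex_\psi)/2} = \consta^2_{1,1}\big(\Delta^{\holex_\psi}/\vep^{1+\holex_\psi}\big)^{1/2} \to 0$ by the assumed scaling $\Delta(\vep)^{\holex_\psi}/\vep^{1+\holex_\psi}\to 0$. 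By Markov's inequality the quantity in the display converges to $0$ in $\PP_{\para_0}$-probability on the event $\{\tilde\apar^\vep\in\SC{K}\}$.

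Finally I would assemble the pieces: for any $\delta>0$,
\begin{align*}
\PP_{\para_0}\!\left(\vep^{-1/2}\!\int_0^1\!\|\phi(\tilde\apar^\vep,\cdot)\|\,\|\psi(\apar,\cdot)-\psi(\apar,\cdot)\|\,ds > \delta\right) \leq \PP_{\para_0}(\tilde\apar^\vep\notin\SC{K}) + \PP_{\para_0}\!\left(J_{0,\SC{K}}\vep^{-1/2}\SC{I}^\vep > \delta\right),
\end{align*}
and let $\vep\to 0$: the first term is bounded by $\eta$ (in the limit), and the second tends to $0$ by the previous paragraph. Since $\eta>0$ is arbitrary, the left side tends to $0$, which is the claim. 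The main obstacle — really the only subtlety — is the interplay between the random parameter and the moment bound: one must be careful that the dominating constant $J_{0,\SC{K}}$ is genuinely deterministic and that the localization event has probability tending to one, so that the $L^1$-bound from part \ref{item:diff-est:diff-1} can legitimately be invoked after conditioning on $\{\tilde\apar^\vep\in\SC{K}\}$; everything else is Hölder's inequality, Markov's inequality, and the stated scaling hypothesis.
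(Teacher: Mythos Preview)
Your proposal is correct and follows the same localization-plus-moment-bound strategy that the paper implicitly relies on; the paper does not spell out a proof of part \ref{item:diff-est:diff-2} but merely notes that the corollary is an immediate consequence of Corollary \ref{cor:int-bd-V}, Remark \ref{rem:x-diff-est} and H\"older's inequality, with the parameter dependence tracked from the earlier proofs. One minor slip: since $\tilde\apar^0$ is allowed to be random, the claim ``$\PP_{\para_0}(\tilde\apar^\vep\in\SC{K})\to 1$'' should instead read $\limsup_{\vep\to 0}\PP_{\para_0}(\tilde\apar^\vep\notin\SC{K})\leq \eta$ (obtained by fattening a compact set containing $\tilde\apar^0$ with high probability), which is exactly what you use correctly in your final assembly step.
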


The decay rate of $(\Delta/\vep)^{m/2}$ in the bounds in the first half of the above corollary is however not enough for CLT of the estimator of diffusion-parameter. We need a finer version giving better decay rate which is the content of Lemma \ref{lem:fun-diff-est-2}. This however requires more careful analysis. We first prove the following lemma. 

\begin{lemma} \label{lem:disc-mart}
Suppose that  $ X^\vep$ satisfies \eqref{eq:SDE1}. Assume that Condition \ref{cond:SDE}: \ref{cond:item:growth-rec} - \ref{cond:item:growth-lyap-b} and Condition \ref{cond:SDE-coeff}: \ref{cond:b-growth}-\hyperlink{subitem-a}{(a)} \& \ref{cond:sig-growth}-\hyperlink{subitem-s-a}{(a)} hold for $\drft_0, \diff_0$. 
Let  $g:\R^{d} \rt \R^{d_1\times d}$ satisfying the following growth condition: $\|g(x)\|  \leq \consta_{g,0} V(x)^{\gexp_{g,0}}$ for some constant $\consta_{g,0}>0$ and exponent $\gexp_{g,0}\geq 0$ .

Define the process $M_{g}^\vep$ by
\begin{align*}
M_{g}^\vep(t) = \int_0^t g(X^\vep(\vr_\vep(s))) \lf(W(s) - W(\vr_\vep(s))\ri) ds.
\end{align*}
Then the following hold.
\begin{enumerate}[label=(\roman*), ref=(\roman*)]
\item \label{item:g-mart} The induced discrete-time process $\lf\{M^\vep_{g}(t_k): k=0,1,2,\hdots,m\ri\}$ is a (discrete-time) martingale.
\item \label{item:g-mart-bd} For some constant $\consta_{g,1,m} \geq 0$
 \begin{align*}
\EE_{\para_0}\lf(\lf\|\int_0^t g(X^\vep(\vr_\vep(s))) \lf(W(s) - W(\vr_\vep(s))\ri) ds\ri\|^m\ri)& \leq \consta_{g,1,m} \Delta(\vep)^m, \quad 0\leq t\leq 1.
\end{align*}
Under the additional assumption that $g$ satisfies the following H\"older type continuity: for some constant $\consta_{g,1} \geq 0$ and exponents $\gexp_{g,1} \geq 0$, $0\leq \holex_{g} \leq 1$
$$ \|g(x) - g(x')\| \leq  \consta_{g,1}  (V(x)^{\gexp_{g,1}}+V(x')^{\gexp_{g,1}})\|x-x'\|^{\holex_{g}},$$ 
we have for some constant $\consta_{g,2,m} \geq 0$
\begin{align*}
\EE_{\para_0}\lf(\lf\|\int_0^t g(X^\vep(s)) \lf(W(s) - W(\vr_\vep(s))\ri) ds\ri\|^m\ri)& \leq \consta_{g,2,m} \Delta(\vep)^{m(1+\holex_g)/2}/\vep^{m/2}.\
 \end{align*}
\end{enumerate}
\end{lemma}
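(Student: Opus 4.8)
For part \ref{item:g-mart}, the plan is to exploit that $\vr_\vep$ is constant on each $[t_j,t_{j+1})$, so that the increments of the embedded discrete process are
$$M_g^\vep(t_{k+1})-M_g^\vep(t_k)=g(X^\vep(t_k))\,\eta_k,\qquad \eta_k\dfeq\int_{t_k}^{t_{k+1}}\big(W(s)-W(t_k)\big)\,ds.$$
Here $g(X^\vep(t_k))$ is $\salg_{t_k}$-measurable while $\eta_k$ is a measurable functional of $\{W(s)-W(t_k):s\ge t_k\}$, hence independent of $\salg_{t_k}$ with $\EE_{\para_0}\eta_k=0$; integrability of $g(X^\vep(t_k))$ follows from $\|g(x)\|\le\consta_{g,0}V(x)^{\gexp_{g,0}}$ together with Corollary \ref{cor:int-bd-V}. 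Therefore $\EE_{\para_0}[M_g^\vep(t_{k+1})-M_g^\vep(t_k)\mid\salg_{t_k}]=g(X^\vep(t_k))\,\EE_{\para_0}\eta_k=0$, which is the asserted discrete-time martingale property.

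For the first estimate in \ref{item:g-mart-bd}, fix $t\in[t_k,t_{k+1})$ and decompose $\int_0^t g(X^\vep(\vr_\vep(s)))(W(s)-W(\vr_\vep(s)))\,ds=M_g^\vep(t_k)+g(X^\vep(t_k))\int_{t_k}^t(W(s)-W(t_k))\,ds$. The centred Gaussian vectors $\eta_j$ and $\int_{t_k}^t(W(s)-W(t_k))\,ds$ have component variances $O(\Delta^3)$, so $\EE_{\para_0}\|\eta_j\|^m\le c_m\Delta^{3m/2}$, and independence from $\salg_{t_j}$ plus the growth bound on $g$ give $\EE_{\para_0}\|g(X^\vep(t_j))\eta_j\|^m\le c_m'\,\Delta^{3m/2}\,\EE_{\para_0}[V(X^\vep(t_j))^{m\gexp_{g,0}}]$. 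For $m\ge2$ I would bound the martingale part by the discrete Burkholder--Davis--Gundy inequality followed by Minkowski's inequality in $L^{m/2}$, insert the per-increment bound, and use $k\le\Delta^{-1}$ together with H\"older's inequality and the consequence $\Delta\sum_{j<k}\EE_{\para_0}[V(X^\vep(t_j))^p]\le\int_0^1\EE_{\para_0}[V(X^\vep(\vr_\vep(s)))^p]\,ds\le V^*_{p,\para_0}$ of Corollary \ref{cor:int-bd-V}; the three powers $\Delta^3$ (per increment), $\Delta^{-1}$ (number of intervals, after the H\"older step) and $\Delta^{-1}$ (integrated $V$-moment) combine to yield $\EE_{\para_0}\|M_g^\vep(t_k)\|^m\le C\Delta^m$, and the boundary term is handled identically, noting $3m/2-1\ge m$. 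For $0<m<2$, Jensen's inequality reduces the claim to the already-proved case $m=2$.

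For the second estimate the plan is to write $g(X^\vep(s))=g(X^\vep(\vr_\vep(s)))+\big(g(X^\vep(s))-g(X^\vep(\vr_\vep(s)))\big)$. The first summand is controlled by the bound just proved, and $\Delta^m\le\Delta^{m(1+\holex_g)/2}/\vep^{m/2}$ since $\holex_g\le1$ and $\Delta,\vep\le1$. For the second summand, the assumed H\"older continuity gives $\|g(X^\vep(s))-g(X^\vep(\vr_\vep(s)))\|\le\consta_{g,1}\big(V(X^\vep(s))^{\gexp_{g,1}}+V(X^\vep(\vr_\vep(s)))^{\gexp_{g,1}}\big)\|X^\vep(s)-X^\vep(\vr_\vep(s))\|^{\holex_g}$; applying Jensen on $[0,1]$ and then H\"older on $\Omega\times[0,1]$ with three exponents decouples this into a $V$-moment factor (bounded uniformly in $\vep$ by Corollary \ref{cor:int-bd-V}), an increment factor (bounded by $(\Delta/\vep)^{m\holex_g/2}$ via Remark \ref{rem:x-diff-est}), and a Brownian factor (bounded by $\Delta^{m/2}$ via $\EE_{\para_0}\|W(s)-W(\vr_\vep(s))\|^{m'}\le c_{m'}\Delta^{m'/2}$). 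The product is $C\,\Delta^{m(1+\holex_g)/2}\vep^{-m\holex_g/2}\le C\,\Delta^{m(1+\holex_g)/2}\vep^{-m/2}$, and adding the two contributions finishes the proof.

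I expect the main obstacle to be obtaining the \emph{exact} power $\Delta^m$ in the first estimate: the earlier results provide only integrated bounds $\int_0^1\EE_{\para_0}[V(X^\vep(\vr_\vep(s)))^p]\,ds$, with no uniform (in $\vep$ and in the grid index) control of $\EE_{\para_0}[V(X^\vep(t_k))^p]$ at a single node, so the computation must be arranged so that the $\Delta^{-1}$ loss incurred in passing to one node is cancelled by the $\Delta^{3/2}$ smallness of a martingale increment and the $\Delta^{-1}$ count of grid intervals; making these powers cancel cleanly is what dictates the BDG/Minkowski bookkeeping and the separate treatment of $m\ge2$ and $0<m<2$.
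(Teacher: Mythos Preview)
Your proposal is correct and follows essentially the same route as the paper: the discrete martingale property via independence of Brownian increments, the discrete BDG inequality combined with the $\Delta^{3m/2}$ moment bound on $\eta_k$ and the integrated $V$-moment from Corollary \ref{cor:int-bd-V} for the first estimate, and the splitting $g(X^\vep(s))=g(X^\vep(\vr_\vep(s)))+\big(g(X^\vep(s))-g(X^\vep(\vr_\vep(s)))\big)$ together with H\"older and Remark \ref{rem:x-diff-est} for the second. Your treatment is in fact slightly more careful than the paper's in two places: you explicitly handle the boundary term for $t\notin\{t_k\}$ (the paper simply asserts it suffices to consider partition points), and you separately reduce the case $0<m<2$ to $m=2$ by Jensen, which is needed since the H\"older/Minkowski step on $(\sum a_j)^{m/2}$ requires $m\ge2$.
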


\begin{proof}
\ref{item:g-mart} Notice that
\begin{align*}
\EE_{\para_0}(M_{g}^\vep(t_{k+1})|\SC{F}_{t_k}) =&\ M_{g}^\vep(t_k) +  \EE_{\para_0} \lf(\int_{t_k}^{t_{k+1}}g(X^\vep(\vr_\vep(s))) \lf(W(s) - W(\vr_\vep(s))\ri) ds \Big|\SC{F}_{t_k}\ri)\\
=&\ \ M_{g}^\vep(t_k) + g(X^\vep(t_k)) \int_{t_k}^{t_{k+1}}\EE_{\para_0}\lf(W(s) - W(t_k) \big|\SC{F}_{t_k}\ri)\ ds\\
=&\ M_{g}^\vep(t_k)+0= M_{g}^\vep(t_k).
\end{align*}

\np
\ref{item:g-mart-bd} First notice that it is enough to prove that the estimate holds for any partition point $t=t_n$, $n\leq m =1/\Delta(\vep)$. By the BDG inequality on the martingale $\{M^\vep_{g}(t_k)\}$ followed by an application of H\"older's inequality, we get for any $n \leq m$
\begin{align*}
\EE_{\para_0}\lf(\|M^\vep_{g}(t_n)\|^m\ri) \leq&\ \const_{\ref*{lem:disc-mart},0,m} \EE_{\para_0}\lf(\sum_{k=0}^{n-1}\lf\|\int_{t_k}^{t_{k+1}}g(X^\vep(\vr_\vep(s))) \lf(W(s) - W(\vr_\vep(s))\ri) ds\ri\|^2\ri)^{m/2}\\
\leq&\ \const_{\ref*{lem:disc-mart},1,m} n^{\f{m}{2}-1} \sum_{k=0}^{n-1}\EE_{\para_0} \lf(\lf\|g(X^\vep(t_k))\int_{t_k}^{t_{k+1}} \lf(W(s) - W(t_k)\ri) ds\ri\|^m\ri)\\
\leq &\ \const_{\ref*{lem:disc-mart},2,m} \Delta(\vep)^{-(\f{m}{2}-1)} \Delta(\vep)^{m-1} \\
&\ \hs{.9cm} \times \sum_{k=0}^{n-1} \EE_{\para_0}\lf[\|g(X^\vep(t_k))\|^m \int_{t_k}^{t_{k+1}} \EE_{\para_0}\lf(\lf\|W(s) - W(t_k)\ri\|^m \big | \SC{F}_{t_k}\ri)ds\ri]\\
\leq &\ \const_{\ref*{lem:disc-mart},3,m} \Delta(\vep)^{-(\f{m}{2}-1)} \Delta(\vep)^{m-1}  \Delta(\vep)^{m/2} \EE_{\para_0}\int_0^1 \|g(X^\vep(\vr_\vep(s)))\|^m \ ds
\leq \consta_{g,1,m} \Delta(\vep)^m.
\end{align*}
The last step above used Lemma \ref{lem:fun-diff-est} (also see Corollary \ref{cor:int-bd-V}) and the assumption on $g$.
For the second bound, write
\begin{align*}
\int_0^t g(X^\vep(s)) \lf(W(s) - W(\vr_\vep(s))\ri) ds = M^\vep_g(t)+R_g^\vep(t),
\end{align*}
where 
$$R_g^\vep(t) = \int_0^t \lf(g(X^\vep(s)) - g(X^\vep(\vr_\vep(s))\ri) \lf(W(s) - W(\vr_\vep(s))\ri) ds.$$
By H\"older's inequality and Corollary \ref{cor:diff-est-1}-\ref{item:diff-est:diff-1},
\begin{align*}
\EE_{\para_0}\lf(\|R_g^\vep(t)\|^m\ri)\leq&\ \lf(\EE_{\para_0}\int_0^t\| g(X^\vep(s)) - g(X^\vep(\vr_\vep(s))\|^{m/(m-1)}\ ds\ri)^{m-1}\EE_{\para_0}\int_0^t\|W(s) - W(\vr_\vep(s))\|^m\ ds\\
\leq&\ \consta_{g,2,m}(\Delta(\vep)/\vep)^{m\holex_g/2}\Delta(\vep)^{m/2},
\end{align*}
for some constant $\consta_{g,2,m}$.
This along with the first bound establishes the second one.
\end{proof}

\begin{lemma} \label{lem:fun-diff-est-2}
Suppose that  $ X^\vep$ satisfies \eqref{eq:SDE1}. Assume that Condition \ref{cond:SDE}  and Condition \ref{cond:SDE-coeff}: \ref{cond:b-growth}-\hyperlink{subitem-a}{(a)} \& \ref{cond:sig-growth}-\hyperlink{subitem-s-a}{(a)}, \ref{cond:sig-growth}-\hyperlink{subitem-s-b}{(b)}  hold for $\para_0=(\drft_0, \diff_0)$. 
Let $\phi:\R^d \rt \R^{d_0\times d_1}$ and $\psi:\R^{d} \rt \R^{d_1}$  be two functions  satisfying the following conditions: for some constants $J_0, J'_1$ and exponents $\gexp0, \gexp_1 \geq 0$
\begin{itemize}
\item $\psi$ is differentiable;
\item for some constant $J_0$ and exponent $\gexp_0$
\begin{itemize}
\item $\max\lf\{\|\phi(x)\|,\|\psi(x)\|, \|D\psi(x)\|, \ri\}  \leq J_0 V(x)^{\gexp_0}$;
\item (local H\"older type continuity)  $\|D\psi(x) - D\psi(x')\| \leq J'_1(V(x)^{\gexp_1}+V(x')^{\gexp_1})\|x-x'\|^{\holex'_\psi}$ for some exponent $0<\holex'_\psi\leq 1$.
\end{itemize}
\end{itemize}
Then for some constant $\consta_{3,m}$,
\begin{align*}
\EE_{\para_0}\lf[\lf\|\int_0^1 \phi(X^\vep(\vr_\vep(s)))\big(\psi(X^\vep(s)) - \psi(X^\vep(\vr_\vep(s)))\big) ds \ri\|^m \ri] \leq \consta_{3,m} (\Delta(\vep)/\vep)^{m(1+ \holex'_\psi \wedge \holex_{\dffun})/2},
\end{align*}
and under the additional assumption that $\|\phi(x) - \phi(x')\| \leq J_1(V(x)^{\gexp_1}+V(x')^{\gexp_1})\|x-x'\|^{\holex_\phi}$ with $0\leq \holex_\phi \leq 1,$
\begin{align*}
\EE_{\para_0}\lf[\lf\|\int_0^1 \phi(X^\vep(s))\big(\psi(X^\vep(s)) - \psi(X^\vep(\vr_\vep(s)))\big) ds \ri\|^m \ri] \ \leq \consta_{4,m}  (\Delta(\vep)/\vep)^{m(1+ \holex'_\psi \wedge \holex_{\dffun}\wedge \holex_\phi)/2},
\end{align*}
for some constant $\consta_{4,m}$.

\end{lemma}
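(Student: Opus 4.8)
The plan is to linearize the increment $\psi(X^\vep(s))-\psi(X^\vep(\vr_\vep(s)))$ about the frozen point $X^\vep(\vr_\vep(s))$, so that the leading contribution to the integral becomes, after also freezing the diffusion coefficient, an integrated discrete‑martingale increment of exactly the type controlled by Lemma~\ref{lem:disc-mart}; the gain from the naive exponent $m/2$ (which is all Corollary~\ref{cor:diff-est-1} provides) to $m(1+\holex'_\psi\wedge\holex_{\dffun})/2$ is precisely the gain recorded in that lemma, namely that $\int_0^t g(X^\vep(\vr_\vep(s)))(W(s)-W(\vr_\vep(s)))\,ds$ has size $O(\Delta)$ rather than $O(\Delta^{1/2})$. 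Throughout I take $m\ge 1$ (the range $0<m<1$ following by Jensen's inequality), I use Corollary~\ref{cor:int-bd-V} to bound time-integrals of powers of $V(X^\vep(\cdot))$ and $V(X^\vep(\vr_\vep(\cdot)))$, I use Remark~\ref{rem:x-diff-est} in the form $\EE_{\para_0}\int_0^1\|X^\vep(s)-X^\vep(\vr_\vep(s))\|^p\,ds\le \consta\,(\Delta/\vep)^{p/2}$, and I note that since the cited bounds are only relevant in the regime $\Delta(\vep)/\vep\to 0$ we may assume $\Delta(\vep)\le\vep\le 1$.

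\emph{Step 1 (Taylor remainder).} As $\psi$ is $C^1$ with locally H\"older derivative, write
$$\psi(X^\vep(s))-\psi(X^\vep(\vr_\vep(s))) = D\psi(X^\vep(\vr_\vep(s)))\big(X^\vep(s)-X^\vep(\vr_\vep(s))\big) + R^\vep(s),$$
where $R^\vep(s)=\int_0^1\big[D\psi((1-u)X^\vep(\vr_\vep(s))+uX^\vep(s))-D\psi(X^\vep(\vr_\vep(s)))\big]du\,\big(X^\vep(s)-X^\vep(\vr_\vep(s))\big)$. The H\"older continuity of $D\psi$, combined with Condition~\ref{cond:SDE}:\ref{cond:item:growth-gen-convex} to bound $V$ along the segment joining $X^\vep(\vr_\vep(s))$ and $X^\vep(s)$, gives $\|R^\vep(s)\|\le \cnstnt\,(V(X^\vep(s))^{q}+V(X^\vep(\vr_\vep(s)))^{q})\|X^\vep(s)-X^\vep(\vr_\vep(s))\|^{1+\holex'_\psi}$ for a suitable exponent $q$. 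Hence, by H\"older's inequality and the two moment bounds above, the contribution of $R^\vep$ satisfies $\EE_{\para_0}\big\|\int_0^1\phi(X^\vep(\vr_\vep(s)))R^\vep(s)\,ds\big\|^m \le \consta\,(\Delta/\vep)^{m(1+\holex'_\psi)/2}$.

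\emph{Step 2 (leading term).} Put $G:=\phi\,D\psi$, which inherits a polynomial-in-$V$ growth bound, and insert the SDE increment $X^\vep(s)-X^\vep(\vr_\vep(s))=\tfrac1\vep\int_{\vr_\vep(s)}^s b(\drft_0,X^\vep(r))\,dr+\tfrac1{\sqrt\vep}\int_{\vr_\vep(s)}^s\dffun(\diff_0,X^\vep(r))\,dW(r)$ into $\int_0^1 G(X^\vep(\vr_\vep(s)))(X^\vep(s)-X^\vep(\vr_\vep(s)))\,ds$. The drift part is bounded crudely by $\consta\,(\Delta/\vep)^m$ using Jensen's inequality and Corollary~\ref{cor:int-bd-V}. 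For the diffusion part, freeze $\dffun$ at $X^\vep(\vr_\vep(s))$: the frozen piece equals $\vep^{-1/2}\int_0^1 g(X^\vep(\vr_\vep(s)))(W(s)-W(\vr_\vep(s)))\,ds$ with $g:=G\cdot\dffun(\diff_0,\cdot)$, and by Lemma~\ref{lem:disc-mart}\ref{item:g-mart-bd} (first bound) its $m$-th moment is $\le\consta\,\Delta^m/\vep^{m/2}$; the correction piece, involving $\dffun(\diff_0,X^\vep(r))-\dffun(\diff_0,X^\vep(\vr_\vep(s)))$, is bounded by the Burkholder--Davis--Gundy inequality, the H\"older continuity of $\dffun(\diff_0,\cdot)$ (Condition~\ref{cond:SDE-coeff}:\ref{cond:sig-growth}), and Remark~\ref{rem:x-diff-est}, yielding $\consta\,(\Delta/\vep)^{m(1+\holex_{\dffun})/2}$. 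Since $\Delta\le\vep\le 1$, both $(\Delta/\vep)^m$ and $\Delta^m/\vep^{m/2}$ are $\le (\Delta/\vep)^{m(1+\holex'_\psi\wedge\holex_{\dffun})/2}$; adding this to Step 1 gives the first estimate with constant $\consta_{3,m}$.

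\emph{Step 3 and the main obstacle.} For the second estimate, decompose $\phi(X^\vep(s))=\phi(X^\vep(\vr_\vep(s)))+[\phi(X^\vep(s))-\phi(X^\vep(\vr_\vep(s)))]$; the first summand reduces to the first estimate, while for the second note that $\psi$ is locally Lipschitz (from the bound on $D\psi$ and Condition~\ref{cond:SDE}:\ref{cond:item:growth-gen-convex}), so the product of the two differences is $\le\cnstnt\,(V(X^\vep(s))^{q'}+V(X^\vep(\vr_\vep(s)))^{q'})\|X^\vep(s)-X^\vep(\vr_\vep(s))\|^{1+\holex_\phi}$, and H\"older's inequality together with the moment bounds produces $\consta\,(\Delta/\vep)^{m(1+\holex_\phi)/2}$; combined with the first estimate this gives the exponent $m(1+\holex'_\psi\wedge\holex_{\dffun}\wedge\holex_\phi)/2$ and the constant $\consta_{4,m}$. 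The crux of the argument — and the reason Lemma~\ref{lem:disc-mart} was proved first — is exactly Step 2: a direct estimate of $\int_0^1 G(X^\vep(\vr_\vep(s)))(X^\vep(s)-X^\vep(\vr_\vep(s)))\,ds$ only yields order $(\Delta/\vep)^{m/2}$, and recovering the extra half-power requires isolating, after freezing the diffusion coefficient, the discrete-martingale structure of $\sum_k g(X^\vep(t_k))\int_{t_k}^{t_{k+1}}(W(s)-W(t_k))\,ds$ and estimating its $m$-th moment sharply; the remaining labor is careful bookkeeping of the $V$-growth exponents through the chain of H\"older inequalities so that the constants $\consta_{3,m},\consta_{4,m}$ depend only on $m$ and the prescribed data $J_0,J_1,J'_1,\dots$.
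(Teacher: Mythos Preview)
Your proposal is correct and follows essentially the same strategy as the paper: Taylor-linearize $\psi$, substitute the SDE increment, freeze $\dffun$ to isolate the discrete-martingale term handled by Lemma~\ref{lem:disc-mart}, and control the H\"older corrections from $D\psi$ and $\dffun$ separately; Step~3 is identical to the paper's. The only organizational difference is that you peel off the Taylor remainder $R^\vep$ first and bound it in one stroke via Remark~\ref{rem:x-diff-est}, whereas the paper keeps the full integral form $\int_0^1 D\psi((1-u)X^\vep(\vr_\vep(s))+uX^\vep(s))\,du$ throughout and only splits off the $D\psi$-correction ($\SC{I}^\vep_{1,1}$) when analyzing the diffusion piece---this is a cosmetic rearrangement, not a different idea.
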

\begin{proof}
By Taylor's expansion
\begin{align*}
\psi(X^\vep(s)) - \psi(X^\vep(\vr_\vep(s))) = \lf(\int_0^1 D\psi((1-u)X^\vep(\vr_\vep(s)) + u X^\vep(s)) \ du\ri) \lf(X^\vep(s)) -X^\vep(\vr_\vep(s))\ri).
\end{align*}
Hence
\begin{align}\label{eq:phi-psi-int}
\begin{aligned}
\int_0^1 & \phi(X^\vep(\vr_\vep(s))) \big(\psi(X^\vep(s)) - \psi(X^\vep(\vr_\vep(s)))\big) ds\\
=&\ \int_0^1\Bigg[\vep^{-1} \int_0^1  \phi(X^\vep(\vr_\vep(s))) D\psi((1-u)X^\vep(\vr_\vep(s)) + u X^\vep(s))\int_{\vr_\vep(s)}^s b(\drft_0, X^\vep(r))\ dr\ ds\\
&\ +\vep^{-1/2} \int_0^1 \phi(X^\vep(\vr_\vep(s)))\   D\psi((1-u)X^\vep(\vr_\vep(s)) + u X^\vep(s))\int_{\vr_\vep(s)}^s \dffun(\diff_0,X^\vep(r)) dW(r)\ ds\Bigg]\ du.
\end{aligned}
\end{align}
Note that by the growth condition on $\phi$, $D\psi$, and $b$, Condition \ref{cond:SDE}:\ref{cond:item:growth-gen-convex},  H\"older's inequality, Lemma \ref{lem:simp-int-bd},  and Corollary \ref{cor:int-bd-V}, the first term (without the $\vep^{-1}$ factor) inside the $u$-integral,
$$\SC{I}^\vep_0(u,t) \equiv \int_0^t\phi(X^\vep(\vr_\vep(s))) D\psi((1-u)X^\vep(\vr_\vep(s)) + u X^\vep(s))\int_{\vr_\vep(s)}^s b(\drft_0, X^\vep(r))\ dr\ ds$$
can be estimated as
\begin{align}\label{eq:I_0-est}
\begin{aligned}
\EE_{\para_0}(\|\SC{I}^\vep_0(u,t)\|^m) \leq&\ \const_{\ref*{lem:fun-diff-est-2},0,m}\lf(\EE_{\para_0} \int_0^t\|\phi(X^\vep(\vr_\vep(s)))\| (V(X^\vep(s))^{\gexp_0\gexp_{V,1}}+V(X^\vep(\vr_\vep(s)))^{\gexp_0\gexp_{V,1}})^{m/(m-1)}\ ds\ri)^{m-1} \\
& \ \times \EE_{\para_0}\int_0^t \lf\|\int_{\vr_\vep(s)}^s b(\drft_0,X^\vep(r))\ dr \ri\|^m\ ds\\
\leq&\ \const_{\ref*{lem:fun-diff-est-2},1,m}\Delta^{m-1}(\vep)\EE_{\para_0}\int_0^t \int_{\vr_\vep(s)}^s \|b(\drft_0,X^\vep(r))\|^m dr \ ds\\
\leq & \const_{\ref*{lem:fun-diff-est-2},1,m}\drbd^m_{b,0}(\drft_0) \Delta(\vep)^m  \EE_{\para_0}\int_0^t V^{m\bexp_{b,0}}(X^\vep(s))\ ds\ 
\leq \ \const_{\ref*{lem:fun-diff-est-2},2,m} \Delta(\vep)^m,
\end{aligned}
\end{align}
where the constants $\const_{\ref*{lem:fun-diff-est-2},1,m}, \const_{\ref*{lem:fun-diff-est-2},2,m}$ are independent of $u$.
Next write the second term inside the $u$-integral of \eqref{eq:phi-psi-int} (without the $\vep^{-1/2}$ factor), $$\SC{I}^\vep_1(u,t) \equiv \int_0^t \phi(X^\vep(\vr_\vep(s)))\   D\psi((1-u)X^\vep(\vr_\vep(s)) + u X^\vep(s))\int_{\vr_\vep(s)}^s \dffun(\diff_0,X^\vep(r)) dW(r)\ ds$$
as
$$\SC{I}^\vep_1(u,t) = M^\vep_{1,0}(t)+\SC{I}^\vep_{1,1}(u,t) + \SC{I}^\vep_{1,2}(t)$$
where
\begin{align*}
\SC{M}^\vep_{1,0}(t) \equiv &\ \int_0^t\phi(X^\vep(\vr_\vep(s)))D \psi(X^\vep(\vr_\vep(s))) \lf(\int_{\vr_\vep(s)}^s  \dffun(\diff_0,X^\vep(\vr_\vep(r))) dW(r)\ri) \ ds\\
=&\ \int_0^t\phi(X^\vep(\vr_\vep(s)))D \psi(X^\vep(\vr_\vep(s))) \dffun(\diff_0,X^\vep(\vr_\vep(s)))   \lf(W(s) - W(\vr_\vep(s))\ri) \ ds,\\
\SC{I}^\vep_{1,1}(u,t) \equiv &\ \int_0^t\phi(X^\vep(\vr_\vep(s)))\big(D\psi((1-u)X^\vep(\vr_\vep(s)) + u X^\vep(s)) - D \psi(X^\vep(\vr_\vep(s)))\big) \\
& \hs{.1 cm} \times \lf(\int_{\vr_\vep(s)}^s  \dffun(\diff_0,X^\vep(r)) dW(r)\ri) \ ds,\\
\SC{I}^\vep_{1,2}(t) \equiv &\ \int_0^t\phi(X^\vep(\vr_\vep(s)))D \psi(X^\vep(\vr_\vep(s))) \lf(\int_{\vr_\vep(s)}^s\dffun(\diff_0,X^\vep(r))- \dffun(\diff_0,X^\vep(\vr_\vep(r))) dW(r)\ri) ds.
\end{align*}
 The estimate on $\SC{M}^\vep_{1,0}(t)$ is more subtle, and a straightforward use of H\"older's and BDG inequalities will lead to a cruder estimate that will not suit our needs. This is where we need Lemma \ref{lem:disc-mart} which gives for some constant $\const_{\ref*{lem:fun-diff-est-2},3,m}$
\begin{align}\label{eq:M_1_0-est}
\EE_{\para_0}\lf(\|\SC{M}^\vep_{1,0}(t)\|^m\ri) \leq \const_{\ref*{lem:fun-diff-est-2},3,m} \Delta(\vep)^m.
\end{align}
Similar to the methods used for estimating the term $\SC{I}^\vep_0$, a combination of  local H\"older type continuity conditions on $D\psi$ and $\dffun(\diff_0,\cdot)$,  H\"older's and BDG inequalities and Lemma \ref{lem:fun-diff-est} easily show that for some constant $\const_{\ref*{lem:fun-diff-est-2},4,m} \geq 0$,
\begin{align}\label{eq:I_1_1-est}
\begin{aligned}
\EE_{\para_0}\lf(\lf\|\SC{I}^\vep_{1,1}(t)\ri\|^m\ri)\leq&\ \const_{\ref*{lem:fun-diff-est-2},4,m} \Delta(\vep)^{m(1+\holex'_{\psi})/2}/\vep^{m\holex'_{\psi}/2},\\
\EE_{\para_0}\lf(\lf\|\SC{I}^\vep_{1,2}(t)\ri\|^m\ri)\leq &\ \const_{\ref*{lem:fun-diff-est-2},4,m} \Delta(\vep)^{m(1+\holex_{\dffun})/2}/\vep^{m\holex_{\dffun}/2}.
\end{aligned}
\end{align}
The assertion now follows from \eqref{eq:phi-psi-int}, \eqref{eq:I_0-est},  \eqref{eq:M_1_0-est} and \eqref{eq:I_1_1-est}.

For the second estimate, first notice because of the assumption on $D\psi$, $\psi$ satisfies the  H\"older-type condition of Corollary \ref{cor:diff-est-1} with $\holex_\psi=1$. Now the second estimate easily follows from the first by writing 
\begin{align*}
\int_0^1 \phi(X^\vep(s))\big(\psi(X^\vep(s)) - \psi(X^\vep(\vr_\vep(s)))\big) ds =&\ \int_0^1 \phi(X^\vep(\vr_\vep(s)))\big(\psi(X^\vep(s)) - \psi(X^\vep(\vr_\vep(s)))\big) ds +R^\vep_{\phi,\psi}
\end{align*}
and estimating the term $R^\vep_{\phi,\psi} \equiv \int_0^1 \big(\phi(X^\vep(s))-\phi(X^\vep(\vr_\vep(s)))\big)\big(\psi(X^\vep(s)) - \psi(X^\vep(\vr_\vep(s))) \big)ds$
by H\"older's inequality and Corollary \ref{cor:diff-est-1} as
\begin{align*}
\EE_{\para_0}\|R^\vep_{\phi,\psi}\|^m \leq \const_{\ref*{lem:fun-diff-est-2},5,m} (\Delta(\vep)/\vep)^{m(1+\holex_\phi)/2}.
\end{align*}
\end{proof}


Next, denote the occupation measures of the process, $X^\vep$, and its discretized version,  $X^\vep\circ\vr_\vep$, on $R^d\times[0,1]$ respectively by $\occ^\vep$ and $\occ^{D,\vep}$, where they are defined by
$$\occ^\vep(A\times[0,t]) = \int_{\R^d\times [0,t]}1_{\{X^\vep(s) \in A\}}ds, \quad \occ^{D,\vep}(A\times[0,t]) = \int_{\R^d\times [0,t]}1_{\{X^\vep(\vr_\vep(s)) \in A\}}ds,$$
with $A \stackrel{m'ble}\subset \R^d$ and $ 0\leq t \leq 1.$
Notice that 
$$\occ^\vep (\cdot \times [0,1]) \eqd \vep \int_0^{\vep^{-1}}1_{\{ X(s)\ \in \ \cdot\}}\ ds$$
where recall $X$ is the solution of the SDE \eqref{eq:SDE0}.

\begin{lemma} \label{lem:tight-occ}
Suppose that  Condition \ref{cond:SDE}-\ref{cond:item:growth-rec} \& \ref{cond:item:growth-lyap-a} and Condition \ref{cond:SDE-coeff}: \ref{cond:b-growth}-\hyperlink{subitem-a}{(a)} \& \ref{cond:sig-growth}-\hyperlink{subitem-s-a}{(a)} hold.  Then for any $\para_0 =(\drft_0,\diff_0)$, $\occ^\vep$ is a tight family of random measures, $X$ defined by \eqref{eq:SDE0} has a unique stationary distribution $\inv_{\para_0}$, and $\occ^\vep \stackrel{\PP_{\para_0}}\rt\inv_{\para_0}\ot \leb$, that is, for any $\kappa>0$, 
$\PP_{\para_0}\lf[d_{\textsc{LP}}(\occ^\vep, \inv_{\para_0}\ot \leb) \geq \kappa\ri] \stackrel{\vep \rt 0} \rt 0,$ where $d_{\textsc{LP}}$ denotes the Levy-Prohorov metric on $\SC{M}^+_1(\R^d\times [0,1])$, the space of probability measures on $\R^d\times [0,1]$. 

If $\Delta(\vep)/\vep \stackrel{\vep \rt 0}\Rt 0$, and Condition \ref{cond:SDE}: \ref{cond:item:growth-rec} - \ref{cond:item:growth-lyap-b} and Condition \ref{cond:SDE-coeff}:\ref{cond:b-growth}-\hyperlink{subitem-a}{(a)}, \ref{cond:b-growth}-\hyperlink{subitem-b-lip}{(b)} \& \ref{cond:sig-growth}-\hyperlink{subitem-s-a}{(a)},\ref{cond:sig-growth}-\hyperlink{subitem-s-b}{(b)} hold,  then the same assertions hold for $\occ^{D,\vep}$

\end{lemma}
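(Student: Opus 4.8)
The plan is to prove the assertions first for the occupation measure $\occ^\vep$ of the rescaled process $X^\vep$ solving \eqref{eq:SDE1}, by tightness followed by identification of every subsequential limit, and then to transfer them to the discretized version $\occ^{D,\vep}$ by a comparison estimate. For tightness, note that each $\occ^\vep$ is a random probability measure on $\R^d\times[0,1]$ whose time-marginal is Lebesgue measure on $[0,1]$, so only the spatial coordinate must be controlled; since $V$ has relatively compact sublevel sets, it suffices to show $\EE_{\para_0}\big[\occ^\vep(\{V>R\}\times[0,1])\big]\to0$ as $R\to\infty$ uniformly in $\vep$, and as $1_{\{V>R\}}\le V/R$ this is immediate from $\sup_{0<\vep\le1}\EE_{\para_0}\big[\int_0^1 V(X^\vep(s))\,ds\big]<\infty$, i.e.\ Proposition~\ref{prop:int-bd}. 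Markov's inequality then gives tightness of $\{\occ^\vep\}$ in $\SC{M}^+_1(\R^d\times[0,1])$; under the stronger hypotheses of the second half, the same estimate with $X^\vep(\vr_\vep(\cdot))$ in place of $X^\vep(\cdot)$, using Corollary~\ref{cor:int-bd-V}, gives tightness of $\{\occ^{D,\vep}\}$.

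For the identification, let $\occ$ be any weak subsequential limit of $\{\occ^\vep\}$; by Skorokhod's representation assume $\occ^{\vep_k}\to\occ$ a.s.\ as measures. Fix $g\in C_c^\infty(\R^d)$ and $h\in C^1([0,1])$. It\^o's formula applied to $s\mapsto h(s)g(X^\vep(s))$ in \eqref{eq:SDE1}, rearranged and multiplied through by $\vep$, shows that $\int_0^1 h(s)\,\SC{L}_{\para_0}g(X^\vep(s))\,ds$ equals $\vep$ times a quantity bounded uniformly in $\vep$ (because $g,h,h'$ are bounded) minus $\sqrt\vep\int_0^1 h(s)\,\nabla^T g(X^\vep(s))\,\dffun(\diff_0,X^\vep(s))\,dW(s)$, whose $L^2(\PP_{\para_0})$-norm is $O(\sqrt\vep)$ by the It\^o isometry, the growth bound on $\dffun$ in Condition~\ref{cond:SDE-coeff}, and Proposition~\ref{prop:int-bd}. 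Hence $\int_0^1 h(s)\,\SC{L}_{\para_0}g(X^\vep(s))\,ds\to0$ in probability, and since the function $(x,s)\mapsto\SC{L}_{\para_0}g(x)\,h(s)$ is bounded and continuous (on the compact support of $g$ the coefficients $b(\drft_0,\cdot)$ and $a(\diff_0,\cdot)$ are continuous), passing to the limit along the subsequence gives $\int \SC{L}_{\para_0}g(x)\,h(s)\,\occ(dx,ds)=0$ a.s., while $\int h(s)\,\occ(dx,ds)=\int_0^1 h(s)\,ds$ a.s. Disintegrating $\occ(dx,ds)=\pi^\om_s(dx)\,ds$ and letting $g,h$ range over countable dense families shows that, for a.e.\ $\om$ and a.e.\ $s$, $\pi^\om_s$ is a probability measure satisfying $\int\SC{L}_{\para_0}g\,d\pi^\om_s=0$ for every $g\in C_c^\infty(\R^d)$, i.e.\ an invariant probability measure for the diffusion \eqref{eq:SDE0}; in particular one exists. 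Since $a(\diff_0,\cdot)$ is positive definite, the diffusion \eqref{eq:SDE0} is irreducible and strong Feller, hence has at most one invariant probability measure (any two would be mutually equivalent, hence equal); call it $\inv_{\para_0}$. Thus $\pi^\om_s=\inv_{\para_0}$ for a.e.\ $(\om,s)$, that is, $\occ=\inv_{\para_0}\otimes\leb$ almost surely.

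Since every subsequential limit equals the deterministic measure $\inv_{\para_0}\otimes\leb$, we obtain $\occ^\vep\to\inv_{\para_0}\otimes\leb$ in distribution, and convergence in distribution to a deterministic limit is convergence in probability; this is the asserted $\PP_{\para_0}[d_{\textsc{LP}}(\occ^\vep,\inv_{\para_0}\otimes\leb)\ge\kappa]\to0$, and it simultaneously establishes the existence and uniqueness of the stationary distribution. For $\occ^{D,\vep}$ I would use the pathwise bound, valid for each $\epsilon>0$,
\[d_{\textsc{LP}}\big(\occ^{D,\vep},\occ^\vep\big)\le\epsilon+\int_0^1 1_{\{\|X^\vep(s)-X^\vep(\vr_\vep(s))\|>\epsilon\}}\,ds,\]
obtained by coupling the two measures through a common uniform time; since $\EE_{\para_0}\big[\int_0^1\|X^\vep(s)-X^\vep(\vr_\vep(s))\|\,ds\big]=O\big((\Delta(\vep)/\vep)^{1/2}\big)\to0$ by Remark~\ref{rem:x-diff-est} (here $\Delta(\vep)/\vep\to0$ is used, and the additional hypotheses of the second half enter through Remark~\ref{rem:x-diff-est} and Corollary~\ref{cor:int-bd-V}), the integral above tends to $0$ in $L^1$ for each fixed $\epsilon$, whence $d_{\textsc{LP}}(\occ^{D,\vep},\occ^\vep)\prt 0$; combined with $\occ^\vep\prt\inv_{\para_0}\otimes\leb$ and the triangle inequality this gives $\occ^{D,\vep}\prt\inv_{\para_0}\otimes\leb$. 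Alternatively, the limit of $\{\occ^{D,\vep}\}$ may be identified directly via a discrete-time It\^o/Taylor expansion, in which the $x$-H\"older continuity of $b$ and $\dffun$ from Condition~\ref{cond:SDE-coeff} controls the coefficient-freezing errors.

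The tightness and the final comparison are routine; the substantive part is the identification of subsequential limits. The two points I expect to be delicate are (i) the passage from $\int\SC{L}_{\para_0}g(x)h(s)\,\occ^\vep(dx,ds)\to0$ (as $\vep\to0$) to the corresponding statement for $\occ$, together with the time-disintegration of $\occ$; and (ii) the uniqueness of the invariant probability measure of the non-degenerate diffusion, which is precisely where the standing positive definiteness of $a$ is essential.
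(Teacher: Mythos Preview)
Your proof is correct and for $\occ^\vep$ follows essentially the same route as the paper: tightness via the Lyapunov bound of Proposition~\ref{prop:int-bd}, identification of subsequential limits by applying It\^o's formula to a test function $g$ and letting $\vep\to0$, then disintegrating the limit and invoking uniqueness of the invariant measure from positive definiteness of $a$. Your use of time-dependent test functions $h(s)g(x)$ is a cosmetic variation on the paper's device of looking at $\int_{\R^d\times[0,t]}\SC{L}_{\para_0}g\,d\occ^0$ for all $t$; both yield the same disintegration.

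Where you genuinely diverge from the paper is in the treatment of $\occ^{D,\vep}$. The paper repeats the It\^o-based identification directly for $\occ^{D,\vep}$, inserting the discretization error $\int_0^t\big(\SC{L}_{\para_0}g(X^\vep(s))-\SC{L}_{\para_0}g(X^\vep(\vr_\vep(s)))\big)\,ds$ and controlling it via Corollary~\ref{cor:diff-est-1}; this is where the H\"older conditions \ref{cond:b-growth}-\hyperlink{subitem-b-lip}{(b)} and \ref{cond:sig-growth}-\hyperlink{subitem-s-b}{(b)} on $b$ and $\dffun$ enter. Your coupling bound $d_{\textsc{LP}}(\occ^{D,\vep},\occ^\vep)\le\epsilon+\int_0^1 1_{\{\|X^\vep(s)-X^\vep(\vr_\vep(s))\|>\epsilon\}}\,ds$ is both cleaner and more economical: it needs only Remark~\ref{rem:x-diff-est}, which in turn requires only the growth conditions already assumed in the first half, so your argument in fact establishes the $\occ^{D,\vep}$ assertion under weaker hypotheses than those stated in the lemma. (Your parenthetical that the extra hypotheses ``enter through Remark~\ref{rem:x-diff-est}'' is slightly off; they do not.) The paper's direct approach, on the other hand, is the template reused later in Proposition~\ref{prop:diff-est} for more general integrands, so it is not wasted effort there.
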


\begin{proof}
Since by the  assumption on $V$, its sublevel sets, $\{x:V(x) \leq R\}$ are compact,  to prove tightness of $\occ^{D,\vep}$ we need to show that for every $\eta>0$, there exists a constant $R_\eta$ such that
\begin{align}\label{eq:occ-tight}
\limsup_{\vep \rt 0}\EE_{\para_0}\occ^{D,\vep}\lf(\{x \in R^d: V(x) > R_\eta\}\times[0,1]\ri) \leq \eta.
\end{align}
Since by Proposition \ref{prop:int-bd}, $\EE_{\para_0} \int_{\R^d\times[0,1]}V(x) \occ^{D,\vep}(dx\times ds) = \EE_{\para_0} \int_0^1 V(X^\vep(\vr_\vep(s)) ds \leq V^*_1<\infty$ for all $\vep\leq \vep_0$ (for some $\vep_0>0$), \eqref{eq:occ-tight} follows from Markov inequality. Similar steps show that $\occ^\vep$ is also tight, and hence by Krylov-Bogoliubov theorem $X$ has a stationary distribution $\inv_{\para_0}$, that is, $\int \SC{L}_{\para_0} g(x)\inv_{\para_0}(dx) =0$ for any $g \in C^2_b(\R^d, \R).$ Since the diffusion coefficient $ a(\diff_0,x)$ is positive-definite, $\inv_{\para_0}$ is the unique stationary distribution of $X$.

We now show that $\occ^{D,\vep} \prt \inv_{\para_0}\ot \leb$ as $\vep \rt 0$. The proof of the corresponding convergence for $\occ^\vep$ is similar and in fact slightly easier.

Let $\occ^0$ be a limit point of $\occ^{D,\vep}$ in $\SC{M}^+_1(\R^d\times [0,1])$, that is, $\occ^{D,\vep} \RT \occ^0$ along a subsequence. For convenience, assume the convergence happens along the entire family.  We will show that $\occ^0 = \inv_{\para_0}\ot \l_{Leb}$. Toward this end, let  $g \in C^\infty_b(\R^d, \R)$.  Then by It\^o's lemma
\begin{align*}
g(X^\vep(t)) = g(X^\vep(0))+ \f{1}{\vep}\int_0^t \SC{L}_{\para_0}g(X^\vep(s)) ds+ \f{1}{\sqrt \vep} \int_0^t \nabla^T g(X^\vep(s)) a(\diff_0,X^\vep(s)) dW(s).
\end{align*}
Thus for $0\leq t \leq 1$,	
\begin{align}
\label{eq:g-exp-ito}
\begin{aligned}
&\int_{\R^d\times [0,t]} \SC{L}_{\para_0} g(x) \occ^{D,\vep}(dx\times ds) = \ \int_{\R^d\times [0,t]} \SC{L}_{\para_0} g(X^\vep(\vr_\vep(s))) ds \\
& = \ -\vep(g(X^\vep(t)) - g(X^\vep(0))) - \int_0^t \Big(\SC{L}_{\para_0}g(X^\vep(s)) - \SC{L}_{\para_0}g(X^\vep(\vr_\vep(s)))\Big)ds - \sqrt \vep M_g^\vep(t),
\end{aligned}
\end{align}	
where $M_g^\vep(t) \equiv \int_0^t \nabla^T g(X^\vep(s)) a(\diff_0,X^\vep(s)) dW(s)$ is a martingale. Since $g$ is bounded, $\vep(g(X^\vep(t)) - g(X^\vep(0))) \ert 0$ a.s. (and in $L^2(\PP_{\para_0})$), and since $\nabla g$ is also bounded it readily follows from  BDG inequality, growth condition on $\s$ (c.f. Condition  \ref{cond:SDE-coeff}-\ref{cond:sig-growth}-\hyperlink{subitem-s-a}{(a)}) and Proposition \ref{prop:int-bd} that $\sqrt \vep \sup_{t\leq 1} \|M_g^\vep(t)\| \ert 0$ in $L^2(\PP_{\para_0})$. Finally notice because of the local H\"older continuity assumptions on $b$ and $\dffun$ (see Condition \ref{cond:SDE-coeff}),  Corollary \ref{cor:diff-est-1}-\ref{item:diff-est:diff-1} shows that for some constant $\const_{\ref*{lem:tight-occ},0,m}$,
$$\EE_{\para_0}\lf[\int_0^1\|\SC{L}_{\para_0} g(X^\vep(s)) - \SC{L}_{\para_0} g(X^\vep(\vr_\vep(s))) \|^2 ds\ri] \leq \const_{\ref*{lem:tight-occ},0,m} (\Delta/\vep)^{\hexp_b\wedge \hexp_{\dffun}} \ert 0.$$
Therefore, taking $\vep \rt 0$ in \eqref{eq:g-exp-ito} we have 
$$\int_{\R^d \times [0,t]} \SC{L}g(x) \occ^0(dx\times ds) = 0,\quad  0\leq t \leq 1.$$
Clearly, $\occ^0(\R^d\times [0,t]) = t$, and thus the first marginal $\occ^0_{(1)}(\cdot) \equiv \occ^0(\R^d\times \cdot) = \leb(\cdot).$ Consequently writing $\occ^0(dx\times dt) = \occ^0_{(2|1)}(dx|t) dt$,  we see from the above equation that for almost all $0\leq t \leq 1$,
$$\int_{\R^d} \SC{L}g(x) \occ^0_{(2|1)}(dx|t) = 0, \quad g \in C^\infty_b(\R^d, \R).$$
It follows that for a.a $t$, $\occ^0_{(2|1)}(\cdot|t)$ is a stationary distribution of $X$, and by the assumption of uniqueness of stationary distribution, we have $\occ^0_{(2|1)}(\cdot|t) = \inv_{\para_0}.$ Since $\occ^0 = \inv_{\para_0} \ot \leb$ is deterministic, the convergence happens in probability. This proves the assertion.
\end{proof}

\begin{proposition} \label{prop:diff-est}
Let $\SC{B}^1 \subset \R^{d'_1}$ and $\SC{B}^2 \subset \R^{d'_2}$ be measurable subsets and  $\phi: \R^{d'_1}\times \R^{d'_2}\times \R^d \rt \R^{d_0 \times d_1}$ and $\psi: \R^{d'_1}\times \R^{d'_2}\times\R^{d} \rt \R^{d_1}$ be two functions such that their restrictions $\phi\Big|_{\SC{B}^1\times \SC{B}^2\times \R^d }$ and $\psi\Big|_{\SC{B}^1\times \SC{B}^2\times \R^d }$ satisfy the following  conditions:
\begin{itemize}
\item there exist functions $J_0, J_1:  \SC{B}^1\times \SC{B}^2  \rt [0,\infty)$, exponents $\gexp_0, \gexp_1\geq 0$ and $0 \leq \holex_{\psi} \leq 1$ such that for every $ \apar= (\apar^1,\apar^2) \in \SC{B}^1\times \SC{B}^2 ,$ 
\begin{itemize}
\item 
$\dst \max\lf\{\|\phi(\apar, x)\|,\|\psi(\apar, x)\|\ri\}  \leq J_0(\apar) V(x)^{\gexp_0};$

 \item $\dst \|\psi(\apar, x) - \psi(\apar, x')\| \leq  J_1(\apar) \lf(V(x)^{\gexp_1}+V(x')^{\gexp_1}\ri)\|x-x'\|^{\holex_{\psi}};$

\end{itemize}

\item $J_0$ is locally bounded;

\item (local H\"older type continuity w.r.t parameter) for any compact set  $\cmpt \subset \SC{B}^1\times \SC{B}^2$, there exist a constant $\consta_{\cmpt}$, Holder exponents $0\leq  \hexp_0 \leq 1$, and growth exponents $\gexp_2\geq 0$,  (each potentially depending on $\cmpt$) such that for every $\apar, \apar' \in \cmpt$
\begin{align*}
\max\lf \{\|\phi(\apar,x) - \phi(\apar',x)\|, \|\psi(\apar,x) - \phi(\apar',x)\|\ri\} \leq &\ \consta_{\cmpt} V(x)^{\gexp_2}\|\apar-\apar'\|^{\hexp_0}.
\end{align*}
\end{itemize}	
 Define the family of random maps, $\lf\{\Phi^\vep:\R^{d'_1}\times \R^{d'_2} \times \R^d \rt \R^{d_0}\ri\}$, and the deterministic map $\lf\{\Phi_{\para_0,0}:\R^{d'_1}\times \R^{d'_2} \times \R^d \rt \R^{d_0}\ri\}$ respectively by 
\begin{align*}
\Phi^\vep(\apar, t) = \int_0^t \phi(\apar, X^\vep(\vr_\vep(s))) \psi(\apar, X^\vep(s))ds, \quad \Phi_{\para_0,0}(\apar,t) = t \int_{\R^d} \phi(\apar, x)\psi(\apar, x))\inv_{\para_0}(dx)
\end{align*}	
Furthermore, assume that Condition \ref{cond:SDE}:\ref{cond:item:growth-rec} - \ref{cond:item:growth-lyap-b} and Condition \ref{cond:SDE-coeff}:\ref{cond:b-growth}-\hyperlink{subitem-a}{(a)}, \ref{cond:b-growth}-\hyperlink{subitem-b-lip}{(b)} \& \ref{cond:sig-growth}-\hyperlink{subitem-s-a}{(a)},\ref{cond:sig-growth}-\hyperlink{subitem-s-b}{(b)} hold and $\Delta(\vep)/\vep \ert 0$. Then the following hold under $\PP_{\para_0}$:

\begin{enumerate}[label=(\roman*), ref=(\roman*)]

\item  \label{item:diff-est:conv-1}

 The stochastic process (random field), $\Phi^{\vep}\Big|_{\SC{B}^1\times \SC{B}^2 }$ (viewed as a process over $\SC{B}^1\times \SC{B}^2  \times [0,1]$) is tight in  $C(\SC{B}^1\times \SC{B}^2 \times [0,1], \R^{d_0})$, and $\Phi^\vep\Big|_{\SC{B}^1\times \SC{B}^2 } \RT \Phi^{\para_0,0}\Big|_{\SC{B}^1\times \SC{B}^2 }$ in $C(\SC{B}^1\times \SC{B}^2  \times [0,1], \R^{d_0})$.

\item  \label{item:diff-est:conv-2}

Let  $\{\hat\apar^{1,\vep}\}$ be a family of $\R^{d'_1}$-valued random variables satisfying $\hat\apar^{1,\vep} \stackrel{\PP_{\para_0}} \rt  \apar^1_0$ as $\vep \rt 0$, where $\apar^1_0 \in \SC{B}_1$ is deterministic.Assume that either $\SC{B}^1$ is open in $\R^{d_1'}$ or $\SC{B}^1$ is closed and each $\hat\apar^{1,\vep}$ takes values in $\SC{B}^1$. . Then the process $\Phi^{\vep}(\hat\apar^{1,\vep},\cdot,\cdot)$ is tight in  $C(\SC{B}^2\times [0,1], \R^{d_0})$ and 
 $\Phi^{\vep}(\hat\apar^{1,\vep},\cdot,\cdot)\Big|_{\SC{B}^2 } \prt \Phi_{\para_0,0}(\apar^1_0,\cdot,\cdot)\Big|_{\SC{B}^2 }$ as $\vep \rt 0$.

\end{enumerate}

\end{proposition}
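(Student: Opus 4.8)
The plan is to decompose $\Phi^\vep = \Psi^\vep + E^\vep$, where
\[
\Psi^\vep(\apar,t) \dfeq \int_{\R^d\times[0,t]}\phi(\apar,x)\psi(\apar,x)\,\occ^{D,\vep}(dx\times ds), \qquad
E^\vep(\apar,t) \dfeq \int_0^t\phi(\apar,X^\vep(\vr_\vep(s)))\big(\psi(\apar,X^\vep(s)) - \psi(\apar,X^\vep(\vr_\vep(s)))\big)\,ds,
\]
so that $\Psi^\vep(\apar,t)$ is the integral of the continuous, polynomially growing function $\phi(\apar,\cdot)\psi(\apar,\cdot)$ against the discretized occupation measure and $E^\vep$ is a pure discretization error. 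First I would dispose of $E^\vep$: by Corollary~\ref{cor:diff-est-1}-\ref{item:diff-est:diff-1} (with $m=m_0=1$), $\EE_{\para_0}\sup_{t\le 1}\|E^\vep(\apar,t)\| \le \consta^2_{1,1}\,J_0(\apar)J_1(\apar)\,(\Delta/\vep)^{\holex_\psi/2}\to0$ for each fixed $\apar$, since $\Delta(\vep)/\vep\ert 0$. For the main term I would invoke $\occ^{D,\vep}\prt\inv_{\para_0}\ot\leb$ from Lemma~\ref{lem:tight-occ}, passing to the limit by truncation: split $\phi(\apar,\cdot)\psi(\apar,\cdot)$ into a bounded continuous part, handled by the weak convergence of $\occ^{D,\vep}$, and a tail, controlled uniformly in $\vep$ by the bound $\sup_{0<\vep\le1}\EE_{\para_0}\int_0^1 V^p(X^\vep(\vr_\vep(s)))\,ds<\infty$ of Corollary~\ref{cor:int-bd-V} (with $p$ larger than twice the growth exponent of $\phi\psi$) together with Markov's inequality. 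This gives $\Phi^\vep(\apar,t)\prt\Phi_{\para_0,0}(\apar,t)$ for every $(\apar,t)$.

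Next I would prove tightness of $\{\Phi^\vep\}$ on each block $K_1\times K_2\times[0,1]$ with $K_i\subset\SC{B}^i$ compact, through a Kolmogorov--Chentsov estimate. For the time variable, H\"older's inequality and Corollary~\ref{cor:diff-est-1}-\ref{item:bd-est:bd-1} give $\EE_{\para_0}\|\Phi^\vep(\apar,t)-\Phi^\vep(\apar,t')\|^m \le |t-t'|^{m-1}\consta^1_{m,m}J_0(\apar)^{2m}\le \const_{K_1}|t-t'|^{m-1}$, using local boundedness of $J_0$. For the parameter variable, the pointwise bound
\[
\big\|\phi(\apar,x')\psi(\apar,x)-\phi(\apar',x')\psi(\apar',x)\big\| \le \const_{K}\big(V(x')^{\gexp_2}V(x)^{\gexp_0}+V(x')^{\gexp_0}V(x)^{\gexp_2}\big)\|\apar-\apar'\|^{\hexp_0}
\]
(from the growth conditions and the local H\"older-in-parameter continuity of $\phi$ and $\psi$), together with H\"older's inequality and Corollary~\ref{cor:int-bd-V}, gives $\EE_{\para_0}\|\Phi^\vep(\apar,t)-\Phi^\vep(\apar',t)\|^m \le \const_{K}\|\apar-\apar'\|^{m\hexp_0}$ for $\apar,\apar'\in K=K_1\times K_2$. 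Taking $m$ large enough that $\min\{m-1,\,m\hexp_0\}>d'_1+d'_2+1$, Kolmogorov--Chentsov yields tightness in $C(K_1\times K_2\times[0,1],\R^{d_0})$, and since $K_1,K_2$ are arbitrary, $\{\Phi^\vep|_{\SC{B}^1\times\SC{B}^2}\}$ is tight. The limit $\Phi_{\para_0,0}$ is continuous (continuity in $t$ is clear; continuity in $\apar$ follows from the same bounds and $\int V^p\,d\inv_{\para_0}<\infty$, itself a consequence of Corollary~\ref{cor:int-bd-V} and $\occ^\vep(\cdot\times[0,1])\prt\inv_{\para_0}$). Tightness together with the finite-dimensional convergence to the deterministic $\Phi_{\para_0,0}$ forces every subsequential limit to equal $\Phi_{\para_0,0}$, giving $\Phi^\vep|_{\SC{B}^1\times\SC{B}^2}\RT\Phi_{\para_0,0}|_{\SC{B}^1\times\SC{B}^2}$ in $C(\SC{B}^1\times\SC{B}^2\times[0,1],\R^{d_0})$, which for a deterministic limit is convergence in probability. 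This establishes \ref{item:diff-est:conv-1}.

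For \ref{item:diff-est:conv-2}, fix a compact $K_2\subset\SC{B}^2$ and choose $\delta>0$ so that $K_\delta\dfeq\overline{B}(\apar^1_0,\delta)\cap\SC{B}^1$ is a compact subset of $\SC{B}^1$ (take $\overline{B}(\apar^1_0,\delta)\subset\SC{B}^1$ when $\SC{B}^1$ is open; $K_\delta$ is automatically compact and contained in $\SC{B}^1$ when $\SC{B}^1$ is closed and each $\hat\apar^{1,\vep}$ takes values in $\SC{B}^1$). On the event $\{\hat\apar^{1,\vep}\in K_\delta\}$, whose probability tends to $1$,
\[
\sup_{K_2\times[0,1]}\big\|\Phi^\vep(\hat\apar^{1,\vep},\cdot,\cdot)-\Phi_{\para_0,0}(\apar^1_0,\cdot,\cdot)\big\| \le \sup_{K_\delta\times K_2\times[0,1]}\big\|\Phi^\vep-\Phi_{\para_0,0}\big\| + \omega(\delta),
\]
with $\omega(\delta)\dfeq\sup_{\apar^1\in K_\delta}\sup_{K_2\times[0,1]}\|\Phi_{\para_0,0}(\apar^1,\cdot,\cdot)-\Phi_{\para_0,0}(\apar^1_0,\cdot,\cdot)\|\to0$ as $\delta\downarrow0$ by continuity of $\Phi_{\para_0,0}$. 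The first term on the right tends to $0$ in probability by \ref{item:diff-est:conv-1}; hence, given $\eta>0$, first choosing $\delta$ with $\omega(\delta)<\eta/2$ and then letting $\vep\to0$ shows $\PP_{\para_0}\big(\sup_{K_2\times[0,1]}\|\Phi^\vep(\hat\apar^{1,\vep},\cdot,\cdot)-\Phi_{\para_0,0}(\apar^1_0,\cdot,\cdot)\|>\eta\big)\to0$. Since $K_2$ is arbitrary, $\Phi^\vep(\hat\apar^{1,\vep},\cdot,\cdot)\prt\Phi_{\para_0,0}(\apar^1_0,\cdot,\cdot)$ in $C(\SC{B}^2\times[0,1],\R^{d_0})$, and tightness of this family follows a fortiori.

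I expect the main obstacles to be the two uniform estimates in the second paragraph: tracking precisely how the growth exponents and the locally bounded prefactor $J_0$ propagate through the bounds of Corollary~\ref{cor:diff-est-1} and Corollary~\ref{cor:int-bd-V} so as to obtain Kolmogorov--Chentsov moment bounds that are uniform over the parameter block, and carrying out the truncation that pushes the unbounded integrand $\phi\psi$ through the merely weak convergence $\occ^{D,\vep}\prt\inv_{\para_0}\ot\leb$ --- the uniform $V^p$-moment bounds being exactly what makes the latter work.
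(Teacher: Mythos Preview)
Your proposal is correct and, for part \ref{item:diff-est:conv-1}, follows essentially the same strategy as the paper: the paper simply says the proof is ``similar to (in fact simpler than)'' that of Proposition~\ref{prop:gen-conv2}\ref{item:si-tgt}, which is precisely the Kolmogorov--Chentsov moment bound you spell out, combined with the occupation-measure convergence of Lemma~\ref{lem:tight-occ} and the uniform-integrability argument of Lemma~\ref{lem:uni-int} (your ``truncation'') to identify the limit.

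For part \ref{item:diff-est:conv-2} your route is a little more direct than the paper's. The paper first argues tightness of $\Phi^\vep(\hat\apar^{1,\vep},\cdot,\cdot)$, passes to a subsequential limit, and then decomposes $\Phi^\vep(\hat\apar^{1,\vep},\cdot,\cdot)=\Phi^\vep(\apar^1_0,\cdot,\cdot)+\big(\Phi^\vep(\hat\apar^{1,\vep},\cdot,\cdot)-\Phi^\vep(\apar^1_0,\cdot,\cdot)\big)$, showing the second piece vanishes via the tightness estimate \eqref{eq:tight-result-2} and identifying the limit of the first piece as in your $\Psi^\vep+E^\vep$ split. You instead exploit \ref{item:diff-est:conv-1} in full: since $\Phi^\vep\to\Phi_{\para_0,0}$ uniformly on compact parameter blocks (in probability), plugging in the convergent random $\hat\apar^{1,\vep}$ and using continuity of $\Phi_{\para_0,0}$ gives the result in one stroke, with tightness for free. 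Both arguments use the same compact-neighborhood construction $K_\delta$ around $\apar^1_0$; yours simply avoids the detour through subsequential limits.
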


\begin{proof}

\np
\ref{item:diff-est:conv-1} 
The proof of this assertion is similar to (in fact simpler than) that of (i) of Proposition \ref{prop:gen-conv2}\ref{item:si-tgt}.

\np
\ref{item:diff-est:conv-2} The proof of tightness of $\{\Phi^\vep(\hat\apar^{1,\vep},\cdot, \cdot)\}$ is similar to Proposition \ref{prop:gen-conv2}.
Now let $(\tilde \Phi_0, \apar^1_0)$ be a limit point of $\{\lf(\Phi^\vep(\hat\apar^{1,\vep},\cdot, \cdot), \hat\apar^{1,\vep}\ri)\}$. Then along a subsequence as $\vep \rt 0$,
$$(\Phi^\vep(\hat\apar^{1,\vep},\cdot, \cdot), \hat\apar^{1,\vep}, \occ^{D,\vep}) \RT  (\tilde\Phi_0,\apar^1_0, \inv_{\para_0}\ot \l_{Leb})$$
 in $C(\SC{B}^2\times[0,1],\R^{d_0})\times \R^{d'_1}\times \mbb{M}_1(\R^d\times[0,1]))$. We will show that $\tilde \Phi_0(\cdot,\cdot) = \Phi_{\para_0,0}(\apar^1_0,\cdot,\cdot)$ a.s, that is, $\tilde \Phi_0(\apar^2,t) = \Phi^0(\apar^1_0,\apar^2, t)$ for each $\apar^2 \in \SC{B}^2, t \in [0,1]$, which, in particular, will show that the limit point does not depend on the choice of the subsequence thereby establishing the assertion. Towards this end, simply write
\begin{align*}
\Phi^\vep(\hat\apar^{1,\vep},\cdot, \cdot) =&\ \Phi^\vep(\apar^{1}_0,\cdot, t)+ \Phi^\vep(\hat\apar^{1,\vep},\cdot, \cdot) - \Phi^\vep(\apar^{1}_0,\cdot, \cdot).
\end{align*}
As in Proposition \ref{prop:gen-conv2}-\ref{item:si-conv}, $\Phi^\vep(\hat\apar^{1,\vep},\cdot, \cdot) - \Phi^\vep(\apar^{1}_0,\cdot, \cdot) \stackrel{\PP_{\para_0}} \rt 0$ in $C(\SC{B}^2\times[0,1],\R^d)$.
Note that tightness of $ \Phi^{\vep}(\apar^1_0,\cdot, \cdot)$ in $C(\SC{B}^2\times[0,1],\R^d)$ is an immediate consequence of part \ref{item:diff-est:conv-1}. To identify its limit as $\Phi_{\para_0, 0}(\apar^1_0,\cdot, \cdot)$,
observe that for any $\apar^2 \in \SC{B}^2$, 
\begin{align*}
\Phi^{\vep}(\apar^1_0,\apar^2, t)=&\ \int_0^t  \phi(\apar^1_0,\apar^2, X^\vep(\vr_\vep(s)))\psi(\apar^1_0,\apar^2,X^\vep(\vr_\vep(s)))ds +  \tilde{\scr{R}}^\vep(t)\\
=&\ \int_{\R^d\times [0,t]}  \phi(\apar^1_0,\apar^2, x)\psi(\apar^1_0,\apar^2,x)\occ^{D,\vep}(dx\times ds) +  \tilde{\scr{R}}^\vep(t).
\end{align*}
The assertion follows upon observing that the first term converges in probability (w.r.t $\PP_{\para_0}$) to $\Phi_{\para_0,0}(\apar^1_0,\apar^2,t) = t\int_{\R^d}  \phi(\apar^1_0,\apar^2, x)\psi(\apar^1_0,\apar^2,x)\inv_{\para_0}(dx)$
by Lemma \ref{lem:tight-occ} and Lemma \ref{lem:uni-int}, and 
\begin{align*}
\tilde{\scr{R}}^\vep(t)\equiv&\ \int_0^t \phi(\apar^1_0,\apar^2, X^\vep(\vr_\vep(s)))\lf(\psi(\apar^1_0,\apar^2, X^\vep(s)) - \psi(\apar^1_0,\apar^2, X^\vep(\vr_\vep(s)))\ri) \ ds \ert 0 \text{ in } L^2(\PP_{\para_0})
\end{align*}
because of Corollary \ref{cor:diff-est-1}-\ref{item:diff-est:diff-1}.

\end{proof}

We now prove convergence  results for the stochastic-integral version of $\Phi^\vep$ in the previous proposition; for this we need $\psi$ to be a matrix-valued function.

\begin{proposition}\label{prop:gen-conv2}
%
Let $\SC{B}^1 \subset \R^{d'_1}$ and $\SC{B}^2 \subset \R^{d'_2}$ be measurable subsets and  $\phi: \R^{d'_1}\times \R^{d'_2}\times \R^d \rt \R^{d_0 \times d_1}$ and $\psi: \R^{d'_1}\times \R^{d'_2}\times\R^{d} \rt \R^{d_1\times d}$ be two functions such that their restrictions $\phi\Big|_{\SC{B}^1\times \SC{B}^2\times \R^d }$ and $\psi\Big|_{\SC{B}^1\times \SC{B}^2\times \R^d }$ satisfy the following  conditions:

\begin{enumerate}[label=(\alph*), ref=(\alph*)]
\item  there exist functions $J_0, J_1:  \SC{B}^1\times \SC{B}^2\rt [0,\infty)$, exponents $\gexp_0, \gexp_1\geq 0$ and $0 \leq \holex_{\psi} \leq 1$ such that for every $ \apar= (\apar^1,\apar^2) \in \SC{B}^1\times \SC{B}^2,$ 
\begin{itemize}
\item 
$\dst \max\lf\{\|\phi(\apar, x)\|,\|\psi(\apar, x)\|\ri\}  \leq J_0(\apar) V(x)^{\gexp_0};$

 \item $\dst \|\psi(\apar, x) - \psi(\apar, x')\| \leq  J_1(\apar) \lf(V(x)^{\gexp_1}+V(x')^{\gexp_1}\ri)\|x-x'\|^{\holex_{\psi}};$

\end{itemize}

\item \label{item:gen-conv2:loc-bdd} $J_0$ is locally bounded;

\item (local H\"older type continuity w.r.t parameter) for any compact set  $\cmpt \subset \SC{B}^1\times \SC{B}^2$, there exist a constant $\consta_{\cmpt}$, Holder exponents $0\leq  \hexp_0 \leq 1$, and growth exponents $\gexp_2\geq 0$,  (each potentially depending on $\cmpt$) such that for every $\apar, \apar' \in \cmpt$
\begin{align*}
\max\lf \{\|\phi(\apar,x) - \phi(\apar',x)\|, \|\psi(\apar,x) - \phi(\apar',x)\|\ri\} \leq &\ \consta_{\cmpt} V(x)^{\gexp_2}\|\apar-\apar'\|^{\hexp_0}.
\end{align*}
\end{enumerate}
Define the family of random maps $\lf\{\Phi^{W,\vep}: \R^{d'_1}\times \R^{d'_2} \times \R^d \rt \R^{d_0}\ri\}$ by
\begin{align*}
\Phi^{W,\vep}(\apar, t) = \int_0^t \phi(\apar, X^\vep(\vr_\vep(s))) \psi(\apar, X^\vep(s)) dW(s).
\end{align*}
 Furthermore, assume that Condition \ref{cond:SDE}:\ref{cond:item:growth-rec} - \ref{cond:item:growth-lyap-b} and Condition \ref{cond:SDE-coeff}:\ref{cond:b-growth}-\hyperlink{subitem-a}{(a)}, \ref{cond:b-growth}-\hyperlink{subitem-b-lip}{(b)} \& \ref{cond:sig-growth}-\hyperlink{subitem-s-a}{(a)},\ref{cond:sig-growth}-\hyperlink{subitem-s-b}{(b)} hold and $\Delta(\vep)/\vep \ert 0$. Then the following hold.
\begin{enumerate}[label=(\roman*), ref=(\roman*)]
 
 \item \label{item:si-tgt} The stochastic process (random field), $\Phi^{W,\vep}\Big|_{\SC{B}^1\times \SC{B}^2 }$ (viewed as a process over $\SC{B}^1\times \SC{B}^2  \times [0,1]$) is tight in  $C(\SC{B}^1\times \SC{B}^2 \times [0,1], \R^{d_0})$.

 \item \label{item:si-conv} Let  $\{\hat\apar^{1,\vep}\}$ be a family of $\R^{d'_1}$-valued random variables satisfying $\hat\apar^{1,\vep} \stackrel{\PP_{\para_0}} \rt  \apar^1_0$ as $\vep \rt 0$, where $\apar^1_0 \in \SC{B}^1$ is deterministic. Assume that either $\SC{B}^1$ is open in $\R^{d_1'}$ or $\SC{B}^1$ is closed and each $\hat\apar^{1,\vep}$ takes values in $\SC{B}^1$.  The process $\Phi^{W,\vep}(\hat\apar^{1,\vep},\cdot,\cdot)$ is tight in $C(\SC{B}^2\times [0,1], \R^{d_0})$ and 
 $\Phi^{W,\vep}(\hat\apar^{1,\vep},\cdot,\cdot) \RT \Phi^W_0(\apar^1_0,\cdot,\cdot)$ as $\vep \rt 0$, where $\Phi^W_0(\apar^1_0,\apar^2,\cdot) \stackrel{d}=\lf(\scr{S}^{\phi_1,\psi_1}_{\para_0}(\apar^1_0, \apar^2)\ri)^{1/2}W$
and 
$$\dst\scr{S}^{\phi,\psi}_{\para_0}(\apar^1_0, \apar^2) \dfeq \int_{\R^d} \phi(\apar^1_0, \apar^2,x) \psi(\apar^1_0, \apar^2,x)\psi^T(\apar^1_0, \apar^2,x) \phi^T(\apar^1_0, \apar^2,x) \inv_{\para_0}(dx).$$
In particular, for a fixed $\apar^2$ as $\vep \rt 0$
 \begin{align*}
 \Phi^{W,\vep}(\hat\apar^{1,\vep}, \apar^2,1) = \int_0^1 & \phi(\hat\apar^{1,\vep}, \apar^2, X^\vep(\vr_\vep(s))) \psi(\hat\apar^{1,\vep}, \apar^2, X^\vep(s)) dW(s) \\
 & \RT \lf(\scr{S}^{\phi,\psi}_{\para_0}(\apar^1_0, \apar^2)\ri)^{1/2} \No(0,I). 
 \end{align*}
 \end{enumerate}
\end{proposition}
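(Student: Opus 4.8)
The plan is to realise $\Phi^{W,\vep}$ as a jointly continuous random field in $(\apar,t)=(\apar^1,\apar^2,t)$ via a Kolmogorov--Chentsov estimate, and then to identify the limit through a functional martingale central limit theorem fed by the occupation-measure convergence $\occ^{D,\vep}\prt\inv_{\para_0}\ot\leb$ of Lemma~\ref{lem:tight-occ}. Part~\ref{item:si-tgt} is needed for part~\ref{item:si-conv} precisely because $\hat\apar^{1,\vep}$ is a functional of the whole path and thus not adapted, so $\Phi^{W,\vep}(\hat\apar^{1,\vep},\cdot,\cdot)$ is not an It\^o integral and must be read as the composition of the continuous field $\apar^1\mapsto\Phi^{W,\vep}(\apar^1,\cdot,\cdot)$ with $\hat\apar^{1,\vep}$.

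\emph{Part~\ref{item:si-tgt}.} For fixed $\apar$, $\Phi^{W,\vep}(\apar,\cdot)$ is a continuous $\R^{d_0}$-valued martingale started at $0$. By the Burkholder--Davis--Gundy inequality, the growth bounds $\max\{\|\phi\|,\|\psi\|\}\le J_0(\apar)V(\cdot)^{\gexp_0}$, H\"older's inequality, and the $\vep$-uniform moment estimate of Corollary~\ref{cor:int-bd-V}, one gets, for each compact $\cmpt\subset\SC{B}^1\times\SC{B}^2$, every $m\ge1$, and a constant $C_m=C_m(\cmpt)$ independent of $\vep$,
\begin{align*}
\EE_{\para_0}\big\|\Phi^{W,\vep}(\apar,t)-\Phi^{W,\vep}(\apar,s)\big\|^m\ \le\ C_m\,|t-s|^{(m-2)/2},\qquad \apar\in\cmpt,\ 0\le s\le t\le1 .
\end{align*}
For the parameter increment, decompose $\phi(\apar)\psi(\apar)-\phi(\apar')\psi(\apar')=(\phi(\apar)-\phi(\apar'))\psi(\apar)+\phi(\apar')(\psi(\apar)-\psi(\apar'))$ and combine the local H\"older-in-parameter bound (of size $\consta_\cmpt V(\cdot)^{\gexp_2}\|\apar-\apar'\|^{\hexp_0}$) with the growth bounds; BDG, H\"older, and Corollary~\ref{cor:int-bd-V} then give
\begin{align*}
\EE_{\para_0}\big\|\Phi^{W,\vep}(\apar,t)-\Phi^{W,\vep}(\apar',t)\big\|^m\ \le\ C_m\,\|\apar-\apar'\|^{m\hexp_0},\qquad \apar,\apar'\in\cmpt .
\end{align*}
Taking $m$ large enough that both $(m-2)/2$ and $m\hexp_0$ exceed $d'_1+d'_2+1$, the multiparameter Kolmogorov--Chentsov theorem furnishes a jointly continuous modification of $\Phi^{W,\vep}$ on $\cmpt\times[0,1]$ with a modulus-of-continuity bound uniform in $\vep$; since $\cmpt$ is arbitrary, $\{\Phi^{W,\vep}\big|_{\SC{B}^1\times\SC{B}^2}\}$ is tight in $C(\SC{B}^1\times\SC{B}^2\times[0,1],\R^{d_0})$.

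\emph{Part~\ref{item:si-conv}, convergence at $\apar^1_0$.} I first show $\Phi^{W,\vep}(\apar^1_0,\cdot,\cdot)\RT\Phi^W_0(\apar^1_0,\cdot,\cdot)$ in $C(\SC{B}^2\times[0,1],\R^{d_0})$. Tightness follows from Part~\ref{item:si-tgt}, so it remains to identify finite-dimensional limits. Fix $\apar^2_1,\dots,\apar^2_k\in\SC{B}^2$ and stack the continuous martingales $\Phi^{W,\vep}(\apar^1_0,\apar^2_j,\cdot)$ into a single $\R^{kd_0}$-valued continuous martingale $\mathbf M^\vep$, $\mathbf M^\vep_0=0$, whose quadratic-covariation blocks are
\begin{align*}
\big[\Phi^{W,\vep}(\apar^1_0,\apar^2_i,\cdot),\Phi^{W,\vep}(\apar^1_0,\apar^2_j,\cdot)\big]_t=\int_0^t \phi_i(X^\vep(\vr_\vep(s)))\psi_i(X^\vep(s))\psi_j^{T}(X^\vep(s))\phi_j^{T}(X^\vep(\vr_\vep(s)))\,ds ,
\end{align*}
with $\phi_i=\phi(\apar^1_0,\apar^2_i,\cdot)$, $\psi_i=\psi(\apar^1_0,\apar^2_i,\cdot)$. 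Replacing each ``$X^\vep(s)$'' inside a $\psi$ by ``$X^\vep(\vr_\vep(s))$'' produces an error whose $L^1(\PP_{\para_0})$ norm is $O\big((\Delta(\vep)/\vep)^{\holex_\psi/2}\big)$ uniformly in $t\le1$, by Corollary~\ref{cor:diff-est-1}-\ref{item:diff-est:diff-1} and H\"older's inequality; since $\Delta(\vep)/\vep\to0$ this error is $\prt 0$. The resulting term equals $\int_{\R^d\times[0,t]}\phi_i(x)\psi_i(x)\psi_j^{T}(x)\phi_j^{T}(x)\,\occ^{D,\vep}(dx\times ds)$; its integrand is continuous with polynomial-in-$V$ growth, and since $\{V^p(X^\vep(\vr_\vep(\cdot)))\}$ is uniformly integrable over $[0,1]$ for each $p$ (Corollary~\ref{cor:int-bd-V}), Lemma~\ref{lem:tight-occ} together with the uniform-integrability argument of Lemma~\ref{lem:uni-int} gives convergence in probability to $t\int_{\R^d}\phi_i\psi_i\psi_j^{T}\phi_j^{T}\,d\inv_{\para_0}$. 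Hence $[\mathbf M^\vep]_t$ converges in probability to a deterministic, continuous, positive-semidefinite-matrix-valued limit whose diagonal blocks are $t\,\scr{S}^{\phi,\psi}_{\para_0}(\apar^1_0,\apar^2_i)$, and the (continuous, multivariate) martingale central limit theorem gives $\mathbf M^\vep\RT$ the corresponding Gaussian martingale; reading off the components yields the finite-dimensional convergence, and with tightness this proves the claim with $\Phi^W_0(\apar^1_0,\cdot,\cdot)$ the asserted Gaussian field.

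\emph{Part~\ref{item:si-conv}, the plug-in and conclusion.} Choose a compact neighbourhood $K_1$ of $\apar^1_0$ inside $\SC{B}^1$ (possible since $\SC{B}^1$ is open, or, if $\SC{B}^1$ is closed, take $K_1=\SC{B}^1\cap\overline{B}(\apar^1_0,r)$); consistency gives $\PP_{\para_0}(\hat\apar^{1,\vep}\in K_1)\to1$. On $\{\hat\apar^{1,\vep}\in K_1\}$, $\Phi^{W,\vep}(\hat\apar^{1,\vep},\cdot,\cdot)$ is the image of the tight pair $(\Phi^{W,\vep}\big|_{K_1\times\SC{B}^2\times[0,1]},\hat\apar^{1,\vep})$ under the continuous evaluation map, hence $\{\Phi^{W,\vep}(\hat\apar^{1,\vep},\cdot,\cdot)\}$ is tight in $C(\SC{B}^2\times[0,1],\R^{d_0})$. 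The $\vep$-uniform modulus-of-continuity estimate from Part~\ref{item:si-tgt} (equicontinuity, via Arzel\`a--Ascoli, of the compacts carrying the tight laws) together with $\hat\apar^{1,\vep}\prt\apar^1_0$ gives, for every compact $K_2\subset\SC{B}^2$,
\begin{align*}
\sup_{\apar^2\in K_2,\,t\le1}\big\|\Phi^{W,\vep}(\hat\apar^{1,\vep},\apar^2,t)-\Phi^{W,\vep}(\apar^1_0,\apar^2,t)\big\|\ \prt\ 0 ,
\end{align*}
i.e. the difference tends to $0$ in probability in $C(\SC{B}^2\times[0,1],\R^{d_0})$. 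Combined with the convergence at $\apar^1_0$ this yields $\Phi^{W,\vep}(\hat\apar^{1,\vep},\cdot,\cdot)\RT\Phi^W_0(\apar^1_0,\cdot,\cdot)$, and applying the continuous functional ``evaluate at $(\apar^2,1)$'' together with $\Phi^W_0(\apar^1_0,\apar^2,1)\eqd\big(\scr{S}^{\phi,\psi}_{\para_0}(\apar^1_0,\apar^2)\big)^{1/2}\No_{d_0}(0,I)$ gives the final ``in particular'' assertion.

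\emph{Main obstacle.} The delicate point is the identification of the limiting covariation in the third step: one must simultaneously discretise away $X^\vep(s)-X^\vep(\vr_\vep(s))$ — which is exactly where the scaling hypothesis $\Delta(\vep)/\vep\to0$ enters, via Corollary~\ref{cor:diff-est-1} — and then pass to the occupation-measure limit against an \emph{unbounded} (only polynomially $V$-controlled) integrand, which requires the uniform integrability of Corollary~\ref{cor:int-bd-V}/Lemma~\ref{lem:uni-int}; the functional martingale CLT then converts convergence of quadratic variations into weak convergence of processes. A second, structural, subtlety is that $\hat\apar^{1,\vep}$ is non-adapted, forcing the substitution to be carried out at the level of the continuous random field rather than of It\^o integrals — which is why the tightness and equicontinuity of Part~\ref{item:si-tgt} are indispensable.
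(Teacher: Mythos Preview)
Your proposal is correct and follows essentially the same approach as the paper: Kolmogorov--Chentsov moment bounds via BDG plus Corollary~\ref{cor:int-bd-V} for part~\ref{item:si-tgt}, then in part~\ref{item:si-conv} the decomposition $\Phi^{W,\vep}(\hat\apar^{1,\vep},\cdot,\cdot)=\Phi^{W,\vep}(\apar^1_0,\cdot,\cdot)+\text{(difference)}$, with the difference controlled by the equicontinuity from~\ref{item:si-tgt} and the limit at $\apar^1_0$ identified by the martingale CLT after showing convergence of quadratic variation. The differences are cosmetic: you swap the order of the two sub-steps in~\ref{item:si-conv}; you expand the quadratic-variation convergence explicitly (discretise via Corollary~\ref{cor:diff-est-1}, then occupation-measure limit via Lemma~\ref{lem:tight-occ} and Lemma~\ref{lem:uni-int}), whereas the paper packages this as a citation to Proposition~\ref{prop:diff-est}-\ref{item:diff-est:conv-1}; and you are slightly more careful in stacking several $\apar^2$-values to get joint finite-dimensional convergence, which the paper leaves implicit.
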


\begin{proof} 
\ref{item:si-tgt} Notice that for any fixed $\apar = (\apar^1, \apar^2) \in \SC{B}^1\times \SC{B}^2$, the family of $\R^{d_0}$-valued random variables $\{\Phi^{W,\vep}(\apar,1)\}$ is tight. Thus to prove the tightness at a process level over $\SC{B}^1\times \SC{B}^2\times[0,1]$, we need to show that for any compact set $\cmpt \subset \SC{B}^{1}\times \SC{B}^2$ and constants $\kappa>0, \beta>0$, there exist $0<\delta, \vep_0<1$ such that the quantity
 \begin{align}\label{eq:tight-result-2}
\SC{P}^{W,\vep, \kappa}_{\cmpt} (\delta)\equiv \PP\lf[\sup_{\apar, \apar' \in \cmpt \atop \|\apar-\apar'\|\leq \delta }\sup_{0\leq t\leq t+h\leq 1 \atop 0\leq h \leq \delta } \|\Phi^{W,\vep}(\apar', t+h) - \Phi^{W,\vep}(\apar,t)\| \geq \kappa\ri] \leq \beta, \quad \text{ for all } \vep \leq \vep_0.
\end{align}
By   \cite[Lemma 12.3]{Bil68} (also see \cite[Theorem 8.8, Chapter 3]{EK86}), this follows if we show that for any compact set $\cmpt \subset \SC{B}^1\times \SC{B}^2$, there exist exponents $m_0\geq 0$, $m_1 > 1$ and $m_2 > 1$ (each potentially depending on $\cmpt$)   and a constant $\bar\consta_{\cmpt,m}$ such that for all $\apar, \apar' \in \cmpt$ and   $t, t+h \in [0,1]$,
\begin{align}\label{eq:tight-mean}
\EE_{\para_0}\lf[ \|\Phi^{W,\vep}(\apar', t+h) - \Phi^{W,\vep}(\apar,t)\|^{m_0}\ri] \leq \bar\consta_{\cmpt,m}\lf(\|\apar - \apar'\|^{m_1}+ |h|^{m_2}\ri).
\end{align}
Toward this end notice that by BDG and H\"older's  inequalities  the term,\\ $E^{W,m_0,\vep}_{1} \equiv \EE_{\para_0}\lf\| \Phi^{W,\vep}(\apar', t+h) - \Phi^{W,\vep}(\apar', t) \ri\|^{m_0}$ can be estimated as
\begin{align}\non
E^{W,m_0,\vep}_{1} \leq &\ \const^{B}_m \EE_{\para_0}\lf[\int_{t}^{t+h} \|\phi(\apar', X^\vep(\vr_\vep(s)))\psi(\apar', X^\vep(s))\|^2 ds\ri]^{m_0/2}\\ \non
\leq &\ \const^{B}_m h^{m_0/2-1} \EE_{\para_0}\lf[\int_{0}^{1} \|\phi(\apar', X^\vep(\vr_\vep(s)))\psi(\apar', X^\vep(s))\|^{m_0} ds\ri]\\ 
\label{eq:phi-tight-0}
\leq&\ \const^{B}_m   J_0^{*,2m_0}(\cmpt) V_{2m_0\gexp_0, \para_0}^* h^{m_0/2-1},
\end{align}
where $\const^{B}_m$ is the Burkholder's constant, $J^{*,2m_0}_0(\cmpt) =\sup_{\apar'\in \cmpt} J_0^{2m_0}(\apar')$ (which is finite because of assumption \ref{item:gen-conv2:loc-bdd}) and the constant $ V_{2m_0\gexp_0, \para_0}^*$ is as in Corollary \ref{cor:int-bd-V} .
Next notice because of the assumption of local H\"older continuity on $\phi$ and $\psi$,
\begin{align*}
\|\phi(\apar',X^\vep(\vr_\vep(s))) & \psi(\apar',X^\vep(s)) - \phi(\apar,X^\vep(\vr_\vep(s)))\psi(\apar,X^\vep(s))\| \\
\leq &\ J^{*}_0(\cmpt)\consta_{\cmpt} V(X^\vep(s))^{\gexp_0\vee\gexp_2}V(X^\vep(\vr_\vep(s)))^{\gexp_0\vee\gexp_2}\|\apar-\apar'\|^{\hexp_0}, \quad \apar, \apar' \in \cmpt
\end{align*}
Thus again by BDG and H\"older's inequalities, the term $E^{W,m_0,\vep}_{2} \equiv \EE_{\para_0}\lf\|\Phi^{W,\vep}(\apar', t) - \Phi^{W,\vep}(\apar, t)\ri\|^{m_0}$ can be estimated as
\begin{align*}
E^{W,m_0,\vep}_{2}  \leq&\ \const^B_m  \EE_{\para_0}\lf[\int_{0}^{t} \|\phi(\apar',X^\vep(\vr_\vep(s)))\psi(\apar',X^\vep(s)) - \phi(\apar,X^\vep(\vr_\vep(s)))\psi(\apar,X^\vep(s))\|^2\ ds\ri]^{m_0/2} \\
\leq &\ \const^B_mJ^{*,m_0}(\cmpt)\consta^{m_0}_{\cmpt}\|\apar-\apar'\|^{m_0\hexp_0}\EE_{\para_0}\lf(\int_0^1 V(X^\vep(s))^{\gexp_0\vee\gexp_2}V(X^\vep(\vr_\vep(s)))^{\gexp_0\vee\gexp_2}\ ds\ri)\\
\leq &\ \const^B_mJ^{*,m_0}(\cmpt)\consta^{m_0}_{\cmpt} V^*_{2m_0(\gexp_0\vee\gexp_2), \para_0}\|\apar-\apar'\|^{m_0\hexp_0}.
\end{align*}
 
\eqref{eq:tight-mean} now follows by simply writing $\Phi^{W,\vep}(\apar', t+h) - \Phi^{W,\vep}(\apar,t) = \Phi^{W,\vep}(\apar', t+h) - \Phi^{W,\vep}(\apar', t) + \Phi^{W,\vep}(\apar', t) - \Phi^{W,\vep}(\apar, t)$ and choosing $m_0>4$ large enough to ensure $m_0\hexp_0 >1$. 

\np
\ref{item:si-conv} Write $\Phi^{W,\vep}(\hat\apar^{1,\vep},\cdot,\cdot) = \Phi^{W,\vep}(\apar^{1}_0,\cdot,\cdot)+ \Phi^{W,\vep}(\hat\apar^{1,\vep},\cdot,\cdot) -\Phi^{W,\vep}(\apar^{1}_0,\cdot,\cdot).$
We first prove that 
$$\Phi^{W,\vep}(\hat\apar^{1,\vep},\cdot,\cdot) -\Phi^{W,\vep}(\apar^{1}_0,\cdot,\cdot) \stackrel{\PP_{\para_0}} \Rt 0 \quad \text{ in } C(\SC{B}^2\times[0,1],\R^d)$$
 as $\vep \rt 0.$  For this we need to show that  for any compact set $\cmpt^2 \subset \SC{B}^2$, $\kappa>0, \tilde\beta>0$, there exists $0<\vep_0<1$ such that for all $\vep \leq \vep_0$, the quantity
\begin{align}\label{eq:prob-conv-zero}
\tilde{\SC{P}}^{W,\vep, \kappa} \equiv \PP\lf[\sup_{\apar^2 \in \cmpt^2 }\sup_{0\leq t\leq 1 } \|\Phi^{W,\vep}(\hat \apar^{1,\vep}, \apar^2, t) - \Phi^{W,\vep}(\apar^1_0, \apar^2, t)\| \geq \kappa\ri] \leq \tilde\beta.
\end{align}
We first claim that for any $0<\delta \leq 1$, there exist $\vep_0$ and a compact set $\SC{B}^{1,\delta}_0 \subset  \SC{B}^1$ containing $\apar^1_0$  such that
$$ \sup\lf\{\|\apar^1-\apar^1_0\|: \apar^1 \in \SC{B}^{1,\delta}_0  \ri\} \leq \delta,$$
 and for all $\vep \leq \vep_0$
\begin{align}\label{eq:eta-cmpt}
\PP_{\theta_0}\lf[\hat \apar^{1,\vep} \notin \SC{B}^{1,\delta}_0\ri] \leq \tilde\beta/2.
\end{align}
 To see this fix a $\delta$. 
  When $\SC{B}^1$ is open, choose $r_0$ so that $\overline{B(\apar^1_0, r_0)} \subset \SC{B}^1$, and take $ \SC{B}^{1,\delta}_0 \equiv \overline{B(\apar^1_0, r_0\wedge\delta)}.$  Since $\hat \apar^{1,\vep} \stackrel{\PP_{\para_0}}\rt \ \apar^1_0$, choose $\vep_0$ such that so that for all $\vep\leq \vep_0$,\\
 $\PP_{\theta_0}\lf[\hat \apar^{1,\vep} \notin \overline{B(\apar^1_0, r_0\wedge\delta)} \ri] \leq \tilde\beta/2.$

Now consider the case when $\SC{B}^1 \subset \R^{d'_1}$ is closed and the $\hat \apar^{1,\vep}$  take values in $\SC{B}^1$. 
 Let  $\vep_0$ be such that for all $\vep\leq \vep_0$, $\PP_{\theta_0}\lf[\hat \apar^{1,\vep} \notin \overline{B(\apar^1_0, \delta)} \ri] \leq \tilde\beta/2.$  Now choose $\SC{B}^{1,\delta}_0 \equiv  \SC{B}^1 \cap \overline{B(\apar^1_0, \delta)}$ which, because $\SC{B}^1$ is closed, is a compact subset of $\SC{B}^1$.

It is clear that in either case, we can choose the $\SC{B}^{1,\delta}_0$ to be non-decreasing in $\delta$, that is,  for $\delta<\delta'$, $\SC{B}^{1,\delta}_0 \subset \SC{B}^{1,\delta'}_0$




Next by (i), which shows tightness of $\Phi^{W,\vep}$ in $C(\SC{B}^1\times\SC{B}^2 \times [0,1], \R^{d_0})$, choose $0<\delta<1$ and  $\vep_0$ so  that  for all $\vep \leq \vep_0$ \eqref{eq:tight-result-2} holds with $\cmpt = \SC{B}^{1,1}_0 \times\cmpt^2$ and $\beta = \tilde \beta/2$. Now make $\vep_0$ smaller if necessary so that \eqref{eq:eta-cmpt} holds for this choice of $\delta$. As remarked, $\SC{B}^{1,\delta}_0 \subset \SC{B}^{1,1}_0$.
Consequently,
\begin{align*}
\tilde{\SC{P}}^{W,\vep, \kappa} \leq&\ \PP\lf[\hat \apar^{1,\vep} \in \SC{B}^{1,\delta}_0, \sup_{\apar^2 \in \cmpt^2}\sup_{0\leq t\leq 1 } \|\Phi^{W,\vep}(\hat \apar^{1,\vep}, \apar^2, t) - \Phi^{W,\vep}(\apar^1_0, \apar^2, t)\| \geq \kappa\ri]\\
&\ + \PP_{\theta_0}\lf[\hat \apar^{1,\vep} \notin \SC{B}^{1,\delta}_0 \ri]
\leq\ \SC{P}^{W,\vep, \kappa}_{\cmpt} (\delta) + \tilde\beta/2 \leq \tilde\beta.
\end{align*}
Tightness of $ \Phi^{W,\vep}(\apar^1_0,\cdot, \cdot)$ in $C(\SC{B}^2\times [0,1], \R^{d_0})$ is an immediate consequence of \ref{item:si-tgt}.  For its convergence observe that for any $\apar^2$, $\Phi^{W,\vep}(\apar^1_0,\apar^2, \cdot) $ is a martingale whose (matrix) quadratic variation $\mqd{\Phi^{W,\vep}(\apar^1_0,\apar^2, \cdot)}$ is given by
\begin{align*}
\mqd{\Phi^{W,\vep}(\apar^1_0,\apar^2, \cdot)}_t = \int_0^t \phi(\apar^1_0,\apar^2, X^\vep(\vr_\vep(s))) \psi(\apar^1_0,\apar^2, X^\vep(s))\psi^T(\apar^1_0,\apar^2, X^\vep(s)) \phi^T(\apar^1_0,\apar^2, X^\vep(\vr_\vep(s)))ds.
\end{align*}
By Proposition \ref{prop:diff-est}-\ref{item:diff-est:conv-1} $\mqd{\Phi^{W,\vep}(\apar^1_0,\apar^2,\cdot)}$ is tight  and  
 $\mqd{\Phi^{W,\vep}(\apar^1_0,\apar^2,\cdot)} \rt \mfk{S}^{\phi_1,\psi_1}_{\para_0}(\apar^1_0, \apar^2)$ in $C([0,1], R^{d_0\times d_0})$, where $\mfk{S}^{\phi_1,\psi_1}_{\para_0}(\apar^1_0, \apar^2)(t)\equiv t\scr{S}^{\phi_1,\psi_1}_{\para_0}(\apar^1_0, \apar^2).$ The assertion now follows from the martingale central limit theorem (see \cite[Chapter 7]{EK86}).

\end{proof}

\section{Proofs of consistency and central limit theorem}\label{sec:main-proofs}

\begin{proof}[{\bf Proof of Theorem \ref{th:const-gen}}] 
We first work under the condition \ref{cond:est-tight}. Since $\{\hat \auxp^\vep\}$ is tight, for every $\eta>0$, there exists a compact ball $\overline{ B(\auxp_*, R_{\eta})} \subset \R^{d_0}$ (note that $\auxp_*$ is deterministic) and $\vep_0$ such that 
\begin{align}\label{eq:tight-auxp}
\sup_{\vep \leq \vep_0} \PP(\hat \auxp^\vep \notin \overline{ B(\auxp_*, R_{\eta})}) = \sup_{\vep \leq \vep_0}  \PP(\hat \auxp^\vep \notin \overline{ B(\auxp_*, R_{\eta})} \cap \mathbb A ) \leq \eta.
\end{align}
 Since $\mathbb A$ is closed, for any $0<r<R$, $\overline{B(\auxp_*, R)}\cap B(\auxp_*,r)^c \cap \mathbb A$ is compact. By the hypothesis $\loss^\vep(\cdot) - \loss^\vep(\auxp_*)   \stackrel{\PP} \Rt  \bar{\loss}^0(\cdot) \dfeq \loss^0(\cdot) - \loss^0(\auxp_*) $ in $C(\mathbb A, \R)$ as $\vep \rt 0$. Thus  for any $0<r<R$ and $\delta>0$,
$$\PP\lf(\sup_{\auxp \in  \overline{B(\auxp_*, R)}\cap B(\auxp_*,r)^c \cap \mathbb A }\lf|\loss^\vep(\auxp) - \loss^\vep(\auxp_*)  - \bar\loss^0(\auxp)\ri| > \delta\ri) \rt 0,$$
 as $\vep \rt 0$.
For any $0<r<R$ and $\delta>0$, let $\bar{\loss}_{*}^{\auxp_*,r,R} = \min_\auxp \lf\{\bar{\loss}^0(\auxp): r\leq|\auxp - \auxp_*|\leq R, \ \auxp \in \mathbb A \ri\}$. Since $\auxp_*$ is the unique global minimum of $\bar{\loss}^0$ and $\bar{\loss}^0(\auxp_*)=0$,  $\bar{\loss}^0(\auxp) > 0$ for all $\auxp \neq \auxp_*$. Thus $\bar{\loss}_{*}^{\auxp_*,r,R} >0$. Let $\delta = \bar{\loss}_{*}^{\auxp_*,r,R} /2.$ 
Then it follows that for any $0<r<R$,  $\PP(\SC{D}_\vep^{r,R}) \ert 1$, where
\begin{align*}
\SC{D}_\vep^{r,R} = \lf\{\om:\loss^\vep(\auxp,\om) - \loss^\vep(\auxp_*,\om)  \geq \bar{\loss}_{*}^{\auxp_*,r,R} /2,\ r\leq|\auxp - \auxp_*|\leq R, \ \auxp \in \mathbb A \ri\}.
\end{align*}
Now notice that for any $R>0,$
\begin{align*}
\lf\{\om: \|\hat\auxp^\vep(\om)-\auxp_*\| > r\ri \} =&\ \lf\{ r< \|\hat\auxp^\vep(\om) - \auxp_*\| \leq R\ri\} \cup \lf\{\hat\auxp^\vep(\om) \notin \overline{ B(\auxp_*, R)}\ri\}\\
\subset&\  \lf(\SC{D}_\vep^{r,R} \ri)^c\cup \lf\{\hat\auxp^\vep \notin \overline{ B(\auxp_*, R)}\ri\}.
\end{align*}
To see this suppose that $\om$ is such that  $r< \|\hat\auxp^\vep(\om)-\auxp_*\| \leq R $ but $\om \notin \lf(\SC{D}_\vep^{r,R}\ri)^c$, that is, $\om \in \SC{D}_\vep^{r,R}$. But then $\loss^\vep(\auxp, \om) >  \loss^\vep(\auxp_*,\om)$ for all $\auxp \in \mathbb A$ satisfying $r\leq|\auxp-\auxp_*|\leq R$. In particular, we  have  $\loss^\vep(\hat\auxp^\vep(\om), \om) > \loss^\vep(\auxp_*, \om) $. This is impossible as  $\hat\auxp^\vep(\om)$ is a global minimizer of $\loss^\vep(\cdot, \om).$

Taking $R = R_{\eta}$, it follows from \eqref{eq:tight-auxp} that 
\begin{align*}
\lim_{\vep \rt 0} \PP\lf( \|\hat\auxp^\vep-\auxp_*\| > r\ri) \leq \eta.
\end{align*}
Since this is true for any $\eta$, the assertion follows.

We now work under condition \ref{cond:parsp}. Here the tightness of $\{\hat\auxp^\vep\}$ is not directly known. Since $\mathbb A$ is open, there exists an $R_0>0$ such that the compact ball $ \overline{ B(\auxp_*, R_0)} \subset \mathbb A$. Now since $\loss^\vep(\cdot, \om)$ is differentiable, and $\mathbb A$ is open and convex, $\nabla_\auxp \loss^\vep(\hat\auxp^\vep(\om), \om)=0$. Next we claim that for any $0<r<R_0$, 
\begin{align*}
\lf\{\om: \|\hat\auxp^\vep(\om)-\auxp_*\| \geq r\ri \} \subset \lf(\SC{D}_\vep^{r,R_0}\ri)^c.
\end{align*}
To see this suppose $\om \in \SC{D}_\vep^{r,R_0}$. Then $\loss^\vep(\auxp, \om) > \loss^\vep(\auxp_*, \om)$ for any $\auxp$ satisfying $r\leq|\auxp - \auxp_*|\leq R_0$ (note that such an $\auxp \in \mathbb A$). It follows that the minimum of $\loss^\vep(\cdot, \om)$ over the compact set $\overline {B(\auxp_*,r)} \subset \mathbb A$ cannot be attained at the boundary, $\{\auxp:\|\auxp-\auxp_*\|=r\}$; it  is attained at an interior point $\tilde \auxp^\vep(\om) \in B(\auxp_*,r)$. In other words, $\tilde \auxp^\vep(\om)$ is a point of local minimum of $\loss^\vep(\cdot, \om)$, and hence $\nabla_{\auxp} \loss^\vep(\tilde \auxp^\vep(\om), \om) =0$. But then condition \ref{cond:parsp} implies that $\hat\auxp^\vep(\om) = \tilde\auxp^\vep(\om) \in B(\auxp_*,r)$ which establishes the claim. The assertion now is immediate from the previously shown fact, $\PP\lf((\SC{D}_\vep^{r,R_0})^c\ri) \rt 0,$ which holds in this case as well because  $ \overline{B(\auxp_*, R_0)}\cap B(\auxp_*,r)^c \cap \mathbb A = \overline{B(\auxp_*, R_0)}\cap B(\auxp_*,r)^c$ is compact.
	 	
\end{proof}

\begin{lemma}\label{lem:conv-like}
For each $\vep>0$, let $X^\vep$ and $\ell^\vep_{A}(\cdot)$ be respectively given by \eqref{eq:SDE0} and \eqref{eq:approx-like-int}. Fix $\para_0 = (\drft_0, \diff_0)$ and define the function $\L_{\para_0} \in C(\drpsp, \R)$ by 
\begin{align*} 
\L_{\para_0}(\drft) = \int_0^1 b^T(\drft, x)a^{-1}(\diff_0,x) b(\drft_0, x) \inv_{\para_0}(dx)-\f{1}{2 }  \int_0^t b^T(\drft, x)a^{-1}(\diff_0,x)(x) b(\drft, x) \inv_{\para_0}(dx).
\end{align*}
Assume that $\Delta(\vep)/\vep \ert 0$ and Condition \ref{cond:MLE-SDE} of Theorem \ref{th:const-AMLE-0} hold. 
Then 
$\vep \ell^\vep_{A}(\cdot) \stackrel{P_{\theta_0}}\Rt \   \L_{\theta_0}(\cdot)$ in $C(\drpsp,\R).$
\end{lemma}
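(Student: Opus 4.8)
The plan is to insert the dynamics $dX^\vep(s)=\vep^{-1}b(\drft_0,X^\vep(s))\,ds+\vep^{-1/2}\dffun(\diff_0,X^\vep(s))\,dW(s)$ into the expression \eqref{eq:approx-like-int} for $\ell^\vep_A(\drft)=\ell^\vep_D(\drft\,|\,\hat\diff^\vep,\BX^\vep_{t_0:t_m})$ and to decompose
\begin{align*}
\vep\,\ell^\vep_A(\drft)=A^\vep(\drft)+B^\vep(\drft)+\sqrt{\vep}\,N^\vep(\drft),
\end{align*}
where
\begin{align*}
A^\vep(\drft)&=\int_0^1\big(a^{-1}(\hat\diff^\vep,X^\vep(\vr_\vep(s)))b(\drft,X^\vep(\vr_\vep(s)))\big)^{T}b(\drft_0,X^\vep(s))\,ds,\\
B^\vep(\drft)&=-\f{1}{2}\int_0^1 b^{T}(\drft,X^\vep(\vr_\vep(s)))a^{-1}(\hat\diff^\vep,X^\vep(\vr_\vep(s)))b(\drft,X^\vep(\vr_\vep(s)))\,ds,\\
N^\vep(\drft)&=\int_0^1\big(a^{-1}(\hat\diff^\vep,X^\vep(\vr_\vep(s)))b(\drft,X^\vep(\vr_\vep(s)))\big)^{T}\dffun(\diff_0,X^\vep(s))\,dW(s).
\end{align*}
The function $\L_{\para_0}(\drft)$ in the statement is exactly the sum of the deterministic integrals $\int b^{T}(\drft,x)a^{-1}(\diff_0,x)b(\drft_0,x)\,\inv_{\para_0}(dx)$ and $-\f{1}{2}\int b^{T}(\drft,x)a^{-1}(\diff_0,x)b(\drft,x)\,\inv_{\para_0}(dx)$, so it suffices to show, under $\PP_{\para_0}$ and in $C(\drpsp,\R)$, that $A^\vep$ and $B^\vep$ converge to these two integrals respectively and that $\sqrt{\vep}\,N^\vep\to 0$.

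The terms $A^\vep$ and $B^\vep$ will both be treated by Proposition \ref{prop:diff-est}-\ref{item:diff-est:conv-2}, applied with $\SC{B}^1=\dfpsp$, $\SC{B}^2=\drpsp$, $\hat\apar^{1,\vep}=\hat\diff^\vep$ and $\apar^1_0=\diff_0$; the consistency $\hat\diff^\vep\prt\diff_0$ is part of the standing assumptions, and the dichotomy that $\dfpsp$ is open or is closed with $\hat\diff^\vep$ valued in it is the hypothesis carried over from Theorem \ref{th:const-AMLE-0}. For $A^\vep$ I would take $\phi(\diff,\drft,x)=(a^{-1}(\diff,x)b(\drft,x))^{T}$ and $\psi(\diff,\drft,x)=b(\drft_0,x)$, so that $\Phi^\vep(\hat\diff^\vep,\drft,1)=A^\vep(\drft)$ while $\Phi_{\para_0,0}(\diff,\drft,1)=\int b^{T}(\drft,x)a^{-1}(\diff,x)b(\drft_0,x)\,\inv_{\para_0}(dx)$; for $B^\vep$ I would take the scalar $\phi(\diff,\drft,x)=b^{T}(\drft,x)a^{-1}(\diff,x)b(\drft,x)$ and $\psi\equiv 1$, so that $\Phi^\vep(\hat\diff^\vep,\drft,1)=-2B^\vep(\drft)$. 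The proposition delivers convergence in probability in $C(\drpsp\times[0,1],\R)$; composing with the continuous evaluation-at-$t=1$ map into $C(\drpsp,\R)$ and using $\hat\diff^\vep\prt\diff_0$ together with symmetry of $a^{-1}$ yields the two desired limits.

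For $N^\vep$ I would apply Proposition \ref{prop:gen-conv2}-\ref{item:si-conv} with the same $\SC{B}^1,\SC{B}^2,\hat\apar^{1,\vep},\apar^1_0$ and with $\phi(\diff,\drft,x)=(a^{-1}(\diff,x)b(\drft,x))^{T}$, $\psi(\diff,\drft,x)=\dffun(\diff_0,x)$; this identifies $N^\vep=\Phi^{W,\vep}(\hat\diff^\vep,\cdot,1)$ and shows $\{N^\vep\}$ is tight --- indeed weakly convergent to a proper limit --- in $C(\drpsp,\R)$. Since scaling such a family by the deterministic factor $\sqrt{\vep}\to 0$ forces convergence in probability to the constant $0$, we get $\sqrt{\vep}\,N^\vep\prt 0$ in $C(\drpsp,\R)$, and adding the three limits gives $\vep\,\ell^\vep_A\prt\L_{\para_0}$ in $C(\drpsp,\R)$, as claimed. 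All the SDE-level hypotheses required by Propositions \ref{prop:diff-est} and \ref{prop:gen-conv2} --- Condition \ref{cond:SDE}:\ref{cond:item:growth-rec}--\ref{cond:item:growth-lyap-b} and the growth and $x$-H\"older bounds on $b$ and $\dffun$ in Condition \ref{cond:SDE-coeff} --- are contained in Condition \ref{cond:MLE-SDE}, and $\Delta(\vep)/\vep\rt 0$ is assumed.

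The one real (though routine) point is to check that the $\phi$ and $\psi$ above satisfy the growth and parameter-regularity hypotheses of those propositions. The polynomial-in-$V$ bounds $\max\{\|\phi(\apar,x)\|,\|\psi(\apar,x)\|\}\le J_0(\apar)V(x)^{q_0}$ and the local $x$-H\"older continuity of $\psi$ follow by forming products of the coefficient estimates on $b$, $\dffun$ and $a^{-1}$ in Condition \ref{cond:SDE-coeff} (the growth exponents simply adding), and the resulting $J_0$ is locally bounded because $\drbd_{b,0}$, $\dibd_{a,0}$ and the other bounding functions are. Joint local H\"older continuity of $\phi=(a^{-1}b)^{T}$ in $(\diff,\drft)$ on compacts follows from the splitting
\begin{align*}
a^{-1}(\diff_1,x)b(\drft_1,x)-a^{-1}(\diff_2,x)b(\drft_2,x)=a^{-1}(\diff_1,x)\big(b(\drft_1,x)-b(\drft_2,x)\big)+\big(a^{-1}(\diff_1,x)-a^{-1}(\diff_2,x)\big)b(\drft_2,x),
\end{align*}
estimating the first summand by the local H\"older condition \ref{cond:b-lip-para} on $b$ in $\drft$ and the second by the local H\"older condition \ref{cond:diff-inv-lip-para} on $a^{-1}$ in $\diff$, and then replacing both H\"older exponents by their minimum (legitimate since the parameter set is bounded). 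I expect this bookkeeping --- not any conceptual difficulty --- to be the bulk of the work. Finally, it is worth recording that $\L_{\para_0}(\drft)=\f{1}{2}\int b^{T}(\drft_0,x)a^{-1}(\diff_0,x)b(\drft_0,x)\,\inv_{\para_0}(dx)-\bar\L_{\para_0}(\drft)$, which reconciles this limit with the function $\bar\L_{\para_0}$ of \eqref{eq:cen-lik-lim} that governs the consistency argument in Theorem \ref{th:const-AMLE-0}.
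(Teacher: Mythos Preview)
Your proposal is correct and follows essentially the same route as the paper's own proof: the paper decomposes $\vep\,\ell^\vep_A(\drft)$ into exactly your three pieces (writing $\SC{I}^\vep$ for your $N^\vep$), invokes Proposition \ref{prop:gen-conv2}-\ref{item:si-conv} to get tightness of the stochastic-integral term so that $\sqrt{\vep}\,\SC{I}^\vep\prt 0$, and uses Proposition \ref{prop:diff-est}-\ref{item:diff-est:conv-2} for the two Lebesgue-integral terms. Your additional bookkeeping on the $\phi,\psi$ hypotheses and the identity linking $\L_{\para_0}$ to $\bar\L_{\para_0}$ are details the paper leaves implicit.
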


\begin{proof}
The result is a consequence of Proposition \ref{prop:diff-est} and Proposition \ref{prop:gen-conv2} with $\diff$ and $\drft$ playing the roles of $\apar^1$ and $\apar^2$.
Notice that under $\PP_{\para_0}$, $X^\vep$ satisfies \eqref{eq:SDE1} with $\theta = \theta_0$ and thus we have from \eqref{eq:sc-lik-eq}

\begin{align} \label{eq:log-lik}
\begin{aligned}
\vep \ell^\vep_{A}(\drft) =&\ \sqrt \vep\  \SC{I}^{\vep}(\drft,1) + \int_0^1 b^T(\drft, X^\vep(\vr_\vep(s)))a^{-1}(\hat\diff^\vep, X^\vep(\vr_\vep(s))) b(\drft_0, X^\vep(s))ds \\
&\hs{.2cm} -\f{1}{2 }  \int_0^1 b^T(\drft, X^\vep(\vr_\vep(s)))a^{-1}(\hat\diff^\vep, X^\vep(\vr_\vep(s))) b(\drft, X^\vep(\vr_\vep(s)))ds
\end{aligned}
\end{align}
where
\begin{align*}
 \SC{I}^{\vep}(\drft, t) = \int_0^t  b^T(\drft, X^\vep(\vr_\vep(s)))  a^{-1}(\hat\diff^\vep, X^\vep(\vr_\vep(s))) \dffun(\diff_0, X^\vep(s)) dW(s).
\end{align*}
Because of the assumption, $\hat \diff^\vep \stackrel{P_{\theta_0}} \rt \diff_0$, and the conditions on $b$ and $\dffun$,  tightness of $ \SC{I}(\cdot, 1)$ in $C(\drpsp,\R)$ follows from Proposition \ref{prop:gen-conv2}-\ref{item:si-conv} (with $\hat \diff^\vep$ and $\drft$ playing the roles of $\hat\apar^{1,\vep}$ and $\apar^2$), and it follows that $\sqrt{\vep}\SC{I}(\cdot, 1) \prt 0$. Tightness and convergence of the other two terms in \eqref{eq:log-lik} to their respective limits  is a consequence of Proposition \ref{prop:diff-est}-\ref{item:diff-est:conv-2}. 

\end{proof}

\begin{proof}[{\bf Proof of Theorem \ref{th:const-AMLE}}] 
By Lemma \ref{lem:conv-like}, $-\vep\ell^\vep_{A}+ \pen^\vep \prt \bar \L_{\para_0}$ in $C(\drpsp, [0,\infty)).$ The assetion now readily follows from Theorem \ref{th:const-gen}.
	 	
\end{proof}

\begin{proof}[{\bf Proof of Theorem \ref{th:aml-clt}}]
Notice that \eqref{eq:deriv-like} shows that under $\PP_{\para_0}$ with $\para_0=(\drft_0,\diff_0)$
\begin{align}
\begin{aligned}
\vep\ \nabla_{\drft}\ell^{\vep}_A(\drft|\BX^\vep_{t_0: t_m}) =& \int_0^1D^T_\drft b(\drft, X^\vep(\vr_\vep(s))) a^{-1}(\hat\diff^\vep, X^\vep(\vr_\vep(s))) b(\drft_0, X^\vep(s)) \ ds\\
& \hs{.2cm} -\int_0^1 D^T_\drft b(\drft, X^\vep(\vr_\vep(s))) a^{-1}(\hat\diff^\vep, X^\vep(\vr_\vep(s))) b(\drft, X^\vep(\vr_\vep(s)))\ ds \\
& \hs{.2cm}+\sqrt \vep \int_0^1 D^T_\drft b(\drft,  X^\vep(\vr_\vep(s)))a^{-1}(\hat\diff^\vep, X^\vep(\vr_\vep(s)))\dffun(\diff_0, X^\vep(s)) dW(s).
\end{aligned}
\label{eq:l-deriv}
\end{align}
where $(D_\drft b(\drft,x))_{i,j} = \partial_{\drft_j} b_i(\drft,x)$. Now $\vep\nabla_{\drft}\ell^{\vep}_A(\drft |\BX^\vep_{t_0: t_m}) - \nabla_\drft\pen^\vep(\drft)\Big|_{\drft= \hat \drft^\vep}=0$. Multiplying this equation by $\vep^{-1/2}$ we have
\begin{align}
\non
0=&\ \Up^\vep(1)+ \vep^{-1/2} \int_0^1D^T_\drft b(\hat \drft^\vep, X^\vep(\vr_\vep(s))) a^{-1}(\hat\diff^\vep, X^\vep(\vr_\vep(s))) b(\drft_0, X^\vep(s)) \ ds \\ \non
& -\vep^{-1/2}\int_0^1 D^T_\drft b(\hat \drft^\vep, X^\vep(\vr_\vep(s))) a^{-1}(\hat\diff^\vep, X^\vep(\vr_\vep(s))) b(\hat \drft^\vep, X^\vep(\vr_\vep(s)))\ ds - \vep^{-1/2}\nabla_\drft \pen^\vep(\hat \drft^\vep)\\ \non
=& \ \Up^\vep(1)+ \vep^{-1/2} \int_0^1D^T_\drft b(\hat \drft^\vep, X^\vep(\vr_\vep(s))) a^{-1}(\hat\diff^\vep, X^\vep(\vr_\vep(s))) \Big(b(\drft_0, X^\vep(s)) - b(\drft_0, X^\vep(\vr_\vep(s)))\Big) ds\\ \non
& \ -  \vep^{-1/2} \int_0^1 D^T_\drft b(\hat \drft^\vep, X^\vep(\vr_\vep(s))) a^{-1}(\hat\diff^\vep, X^\vep(\vr_\vep(s)))\Big(b(\hat \drft^\vep, X^\vep(\vr_\vep(s))) - b(\drft_0, X^\vep(\vr_\vep(s)))\Big) ds\\ 
& \ - \vep^{-1/2}\nabla_\drft \pen^\vep(\hat \drft^\vep) 
\equiv\ \Up^\vep(1) + \vep^{-1/2}\SC{I}^\vep_1 - \vep^{-1/2}\SC{I}^\vep_2- \vep^{-1/2}\nabla_\drft \pen^\vep(\hat \drft^\vep),
\label{eq:l-deriv}
\end{align}
where the process $\Up^\vep$ is given by
\begin{align*}
\Up^\vep(t) = \int_0^ t D^T_\drft b(\hat \drft^\vep,  X^\vep(\vr_\vep(s)))a^{-1}(\hat\diff^\vep, X^\vep(\vr_\vep(s))) \dffun(\diff_0, X^\vep(s)) dW(s).
\end{align*}
Since $(\hat \drft^\vep,\hat \diff^\vep) \stackrel{P_{\para_0}} \rt \para_0 = (\drft_0,\diff_0)$ by the assumption \ref{cond:CLT-const} of Theorem \ref{th:aml-clt-0},  it follows by   Proposition \ref{prop:gen-conv2}-\ref{item:si-conv} (with $(\hat\drft^\vep,\hat \diff^\vep) $ playing the roles of $\hat\apar^{1,\vep}$ and discarding the $\apar^2$ input there) that as $\vep \rt 0$,
\begin{align}\label{eq:conv-mart}
\Up^\vep(1) \rt \Sigma_{\theta_0}^{1/2}\xi, 
\end{align}
where $\xi \sim \No_{n_0}(0,I)$, and by Corollary \ref{cor:diff-est-1} - \ref{item:diff-est:diff-2}  that $\vep^{-1/2}\SC{I}^\vep_1 \rt 0.$ It is easy to see because of the assumption of consistency of $\hat\drft^\vep$ and \ref{cond:clt-conv-penalty} that $\vep^{-1/2}\nabla_\drft \pen^\vep(\hat \drft^\vep) \prt 0.$

Next consider the term $\vep^{-1/2}\SC{I}^\vep_2 $. First order Taylor expansion of  $b(\cdot,X^\vep(\vr_\vep(s)))$ around $\drft_0$ gives
\begin{align*}
b(\hat \drft^\vep,X^\vep(\vr_\vep(s))) - b(\drft_0, X^\vep(\vr_\vep(s))) =&\ \lf(\int_0^1\lf(D_\drft b(\drft_0+u(\hat \drft^\vep-\drft_0), X^\vep(\vr_\vep(s)))\ri) du\ri) \\
& \times  (\hat \drft^\vep-\drft_0),
\end{align*}
and thus  \eqref{eq:l-deriv} after an interchange of integral gives
\begin{align}
\vep^{-1/2}\SC{I}^\vep_2 = \lf(\int_0^1 G^\vep(u)\ du\ri) \vep^{-1/2} (\hat \drft^\vep-\drft_0) = \Up^\vep(1)+\vep^{-1/2}\SC{I}^\vep_1- \vep^{-1/2}\nabla_\drft \pen^\vep(\hat \drft^\vep), 
\label{eq:CLT-prelim-eq}
\end{align}
where 
$$ G^\vep(u) = \int_0^1 D^T_\drft b(\hat \drft^\vep,  X^\vep(\vr_\vep(s))) a^{-1}(\hat\diff^\vep,X^\vep(\vr_\vep(s)))D_\drft b(\drft_0+u(\hat \drft^\vep-\drft_0), X^\vep(\vr_\vep(s))) ds.$$
We will show that
\begin{align*}
	\int_0^1 G^\vep(u)\ du \prt \Sigma_{\para_0}.
\end{align*}
By going to a subsequence if necessary we assume that $(\hat\drft^\vep, \hat\diff^\vep, \occ^{D,\vep} ) \ert (\drft_0, \diff_0, \inv_{\para_0}\ot \leb )$ a.s. 
From the proof of Proposition \ref{prop:gen-conv2}-\ref{item:si-conv}, it is easy to see that there exist compact sets $\dfpsp_0\subset \dfpsp$ and $\overline{B(\drft_0,r_0)} \subset \drpsp$ (for some suitable $r_0$)  and some $\vep_0 \equiv \vep_0(\om)$ such that for all $\vep < \vep_0$, $ \hat\drft^\vep(\om) \in \overline{B(\drft_0,r_0)}, \ \hat\diff^\vep(\om) \in \dfpsp_0$ a.s.

Now because of Condition \ref{cond:SDE-coeff}, the mapping 
$$(\drft, \diff, x) \rt D^T_\drft b(\drft,  x)a^{-1}(\diff,x)D_\drft b(\drft_0+u(\drft-\drft_0), x)$$
satisfies the conditions (on $\phi$) of Proposition \ref{prop:diff-est}, and it shows, with $(\hat \drft^\vep,\hat \diff^\vep)$ playing the role of $\hat \apar^{1,\vep}$,  that $G^\vep(u) \rt \Sigma_{\theta_0}$ as $\vep \rt 0$ for every $u$. Next, by Condition \ref{cond:SDE-coeff} for all $\vep < \vep_0,$
\begin{align*}
|G^\vep(u)| \leq &\  \drbd'_{b,0}(\hat \drft^\vep)\drbd'_{b,0}(\drft_0+u(\hat \drft^\vep-\drft_0))\dibd_{a,0}(\hat\diff^\vep) \int_0^1V(X^\vep(\vr_\vep(s)))^{2\bexp'_{b,0}+\sexp_{a,0}} \ ds\\
\leq &\  \lf(\drbd'^{*}_{b,0}\lf( \overline {B(\drft_0,r_0)}\ri)\ri)^2\dibd^*_{a,0}(\dfpsp_0) \int_0^1V(X^\vep(\vr_\vep(s)))^{2\bexp'_{b,0}+\sexp_{a,0}} \ ds \equiv F^\vep.
\end{align*}
Here $\drbd'^{*}_{b,0}\lf( \overline {B(\drft_0,r_0)}\ri) \equiv \sup\limits_{ \drft \in  \overline {B(\drft_0,r_0)} } \drbd'_{b,0}(\drft) < \infty$ and $\dibd^{*}_{a,0}\lf( \dfpsp_0\ri) \equiv \sup\limits_{ \diff \in \dfpsp_0} \dibd_{a,0}(\diff) < \infty$ by the assumption of local boundedness of $\drbd'_{b,0}$ and $\dibd_{a,0}$.
%

By  Proposition \ref{prop:diff-est}, $\dst F^\vep \ert  \lf(\drbd'^{*}_{b,0}\lf( \overline {B(\drft_0,r_0)}\ri)\ri)^2\dibd^*_{a,0}(\dfpsp_0)\int_{R^d}V(x)^{2\bexp'_{b,0}+\sexp_{a,0}}\inv_{\para_0}(dx)$.
It now follows by the (generalized) dominated convergence theorem that $\int_0^1 G^\vep(u)\ du  \stackrel{\vep \rt 0}\rt \Sigma_{\para_0}.$ Since $\Sigma_{\para_0}$ is non-singular, Lemma \ref{lem:aux-conv}, \eqref{eq:CLT-prelim-eq} and \eqref{eq:conv-mart} prove that $ \vep^{-1/2} (\hat\drft^\vep-\drft_0)$ is tight and converges in distribution to $\Sigma_{\para_0}^{-1/2}\xi$.
\end{proof}

We now work towards establishing limit theorems for the estimator of the diffusion parameter. The following lemma is crucial for consistency of $\hat\diff^\vep$. It is important to note that this lemma does not assume any specific forms of the diffusion coefficient $\dffun(\diff, \cdot)$ and thus will be important for developing asymptotics of the estimator $\hat \diff^\vep$ for other forms of $\dffun(\diff, \cdot)$ not discussed in the paper.

\begin{lemma}\label{lem:diff-const}
 Let $\mqd{X^\vep}^{D,\vr_\vep}$ denote the discretized quadratic variation of $X$ with respect to the partition $\{t_i\}$, as defined in \eqref{eq:quad-disc-X-ep}. Suppose that Condition \ref{cond:LLN-diff} of Theorem \ref{th:const-diff-0} hold, and $\Delta(\vep)/\vep \ert 0$. Then 
 $$\sup_{0\leq t\leq 1}\lf\|\vep \mqd{X^\vep}^{D,\vr_\vep}_t -\int_0^t a(\diff_0,X^\vep(\vr_\vep(s))\ ds\ri\| \ert 0 \mbox{ in } L^2(\PP_{\para_0}).$$  
 In particular, $\vep \mqd{X^\vep}^{D,\vr_\vep}_1 -\int_0^1 a(\diff_0,X^\vep(\vr_\vep(s))\ ds \ert 0$ in $L^2(\PP_{\para_0})$.
\end{lemma}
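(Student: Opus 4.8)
The plan is to expand $\mqd{X^\vep}^{D,\vr_\vep}_t$ using the SDE \eqref{eq:SDE1} on each interval $[t_i,t_{i+1}]$ and show that everything except the ``Brownian square'' piece is negligible, and that the Brownian square piece converges to $\int_0^t a(\diff_0, X^\vep(\vr_\vep(s)))\,ds$. First I would write, for $s\in[t_i,t_{i+1}]$,
$$X^\vep(s)-X^\vep(t_i) = \f{1}{\vep}\int_{t_i}^{s} b(\drft_0,X^\vep(r))\,dr + \f{1}{\sqrt\vep}\int_{t_i}^{s}\dffun(\diff_0,X^\vep(r))\,dW(r) \equiv B^\vep_i(s) + N^\vep_i(s),$$
so that $(X^\vep(t_{i+1})-X^\vep(t_i))(X^\vep(t_{i+1})-X^\vep(t_i))^T$ splits into the drift-drift term $B^\vep_i B_i^{\vep,T}$, the two cross terms $B^\vep_i N_i^{\vep,T}$ and $N^\vep_i B_i^{\vep,T}$, and the martingale-increment square $N^\vep_i N_i^{\vep,T}$ (all evaluated at $s=t_{i+1}$). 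Summing over $i$ with the prefactor $\vep$: the drift-drift contribution has $L^1$-norm $O(\vep\cdot m\cdot(\Delta/\vep)^2) = O(\Delta^2/\vep)$ using $\EE_{\para_0}\|\int_{t_i}^{t_{i+1}}b(\drft_0,X^\vep(r))\,dr\|^2 \lesssim \Delta\int_{t_i}^{t_{i+1}}\EE_{\para_0}\|b(\drft_0,X^\vep(r))\|^2\,dr$ together with Condition \ref{cond:SDE-coeff}:\ref{cond:b-growth}-\hyperlink{subitem-a}{(a)} and Corollary \ref{cor:int-bd-V}; since $m=1/\Delta$ and $\Delta/\vep\to0$, this is $O((\Delta/\vep)\cdot\Delta)\to0$. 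The cross terms are handled by Cauchy--Schwarz against the martingale term, whose $L^2$-norm per block is $O(\sqrt{\Delta/\vep}\cdot\sqrt{\Delta})$, giving $O(\Delta/\vep)\to 0$ after summation. The genuinely delicate piece is the martingale square.

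For the martingale-square piece, write $\vep\sum_i N^\vep_i(t_{i+1}) N_i^{\vep,T}(t_{i+1}) = \sum_i \int_{t_i}^{t_{i+1}}\int_{t_i}^{t_{i+1}} \dffun(\diff_0,X^\vep(r))\,dW(r)\big(\dffun(\diff_0,X^\vep(r'))\,dW(r')\big)^T$, and compare with $\sum_i \int_{t_i}^{t_{i+1}} a(\diff_0, X^\vep(t_i))\,ds = \int_0^{t}a(\diff_0,X^\vep(\vr_\vep(s)))\,ds$ (up to the boundary block near $t$, which is $O(\Delta)$ uniformly). The natural decomposition is: (a) replace $\dffun(\diff_0,X^\vep(r))$ inside the double integral by $\dffun(\diff_0,X^\vep(t_i))$, the error being controlled by the H\"older-in-$x$ continuity of $\dffun$, Condition \ref{cond:SDE-coeff}:\ref{cond:sig-growth}-\hyperlink{subitem-s-b}{(b)}, together with Remark \ref{rem:x-diff-est} / Corollary \ref{cor:diff-est-1} and a BDG estimate, yielding a bound of order $(\Delta/\vep)^{\holex_\dffun/2}\to0$ (possibly up to lower-order powers); and (b) once the integrand is frozen at $t_i$, the block becomes $a(\diff_0,X^\vep(t_i))^{1/2}\big(W(t_{i+1})-W(t_i)\big)\big(W(t_{i+1})-W(t_i)\big)^T a(\diff_0,X^\vep(t_i))^{T/2}$-type expression, and one must show $\vep\sum_i \dffun(\diff_0,X^\vep(t_i))\big[(W(t_{i+1})-W(t_i))(W(t_{i+1})-W(t_i))^T - \Delta I\big]\dffun^T(\diff_0,X^\vep(t_i)) \to 0$ in $L^2(\PP_{\para_0})$. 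This last term is a sum of martingale differences (each bracketed increment is $\SC{F}_{t_{i+1}}$-measurable, mean-zero given $\SC{F}_{t_i}$, and independent of $X^\vep(t_i)$), so its second moment is $\sum_i \EE_{\para_0}\big[\|\dffun(\diff_0,X^\vep(t_i))\|^4\big]\cdot O(\Delta^2) \lesssim m\cdot O(\Delta^2)\cdot\sup_i\EE_{\para_0}\|\dffun(\diff_0,X^\vep(t_i))\|^4 = O(\Delta)\to 0$, using Condition \ref{cond:SDE-coeff}:\ref{cond:sig-growth}-\hyperlink{subitem-s-a}{(a)} and Corollary \ref{cor:int-bd-V} to bound the fourth-moment sum uniformly in $\vep$. (The factor $\vep^2$ from $\vep\cdot N_iN_i^T$ combines with $\vep^{-1}$ from the diffusion scaling, giving the clean $\Delta^2$ per block.)

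Finally, for the \emph{uniform} (in $t\in[0,1]$) statement I would pass through a maximal inequality: the discrepancy process $t\mapsto \vep\mqd{X^\vep}^{D,\vr_\vep}_t - \int_0^t a(\diff_0,X^\vep(\vr_\vep(s)))\,ds$ is piecewise constant in $t$ with jumps at the $t_i$, is a sum of martingale differences along the grid (after the frozen-coefficient reduction) plus remainder terms already bounded in $L^2$ uniformly in $t$ by the estimates above; applying Doob's $L^2$-maximal inequality to the martingale part over the grid $\{t_k\}$ and taking suprema of the $O(\Delta^{\gamma})$ remainder bounds over $t$ gives $\EE_{\para_0}\sup_{0\le t\le1}\|\cdot\|^2 \to 0$. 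The in-probability / $L^2$ convergence at $t=1$ is then immediate, and this is exactly the form needed in Theorem \ref{th:const-diff-0}. The main obstacle is step (a)--(b) in the second paragraph: controlling the replacement of $\dffun(\diff_0,X^\vep(r))$ by $\dffun(\diff_0,X^\vep(t_i))$ inside the double stochastic integral at the right rate, where one cannot simply use Lemma \ref{lem:fun-diff-est} off the shelf because the error sits inside a product of two It\^o integrals over the same block — a careful iterated conditioning / BDG argument block-by-block (as in Lemma \ref{lem:disc-mart}) is what makes it work.
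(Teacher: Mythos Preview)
Your block-by-block expansion is correct and will go through, but the paper takes a genuinely different and slicker route. Instead of splitting each increment into drift and martingale parts and then separately handling the four resulting pieces, the paper uses the summation-by-parts identity
\[
\mqd{X^\vep}^{D,\vr_\vep}_t = X^\vep(t)X^\vep(t)^T - X^\vep(0)X^\vep(0)^T - 2\Big(\int_0^t X^\vep(\vr_\vep(s))\,dX^\vep(s)^T\Big)_{\mathrm{sym}},
\]
and compares it to the corresponding It\^o product formula for the true quadratic variation $\mqd{X^\vep}_t = \vep^{-1}\int_0^t a(\diff_0,X^\vep(s))\,ds$, which has the same shape with $X^\vep(s)$ in place of $X^\vep(\vr_\vep(s))$. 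This immediately gives the decomposition
\[
\vep\mqd{X^\vep}^{D,\vr_\vep}_t - \int_0^t a(\diff_0,X^\vep(\vr_\vep(s)))\,ds = R_0^\vep(t) + 2(R_1^\vep(t))_{\mathrm{sym}} + 2\vep^{1/2}(R_2^\vep(t))_{\mathrm{sym}},
\]
where $R_0^\vep$, $R_1^\vep$, $R_2^\vep$ are all continuous-time integrals of the form $\int_0^t\big(\text{something involving }X^\vep(s)-X^\vep(\vr_\vep(s))\big)$, and each plugs directly into Corollary~\ref{cor:diff-est-1} (for $R_0,R_1$) or BDG plus Corollary~\ref{cor:diff-est-1} (for $R_2$). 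The $\sup_t$ is then automatic: $R_0^\vep,R_1^\vep$ are Lebesgue integrals bounded pointwise by their total variation, and $R_2^\vep$ is a continuous martingale so BDG controls the supremum.

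What the paper's approach buys is that it completely sidesteps your coefficient-freezing step (a) and the explicit $(\Delta W)(\Delta W)^T-\Delta I$ martingale-difference analysis; the three remainders are already in the exact form handled by the auxiliary lemmas, and no iterated conditioning over blocks is needed. What your approach buys is conceptual transparency: it makes explicit that the leading contribution comes from the Brownian-square piece and that everything else is a genuine discretization error, and it does not require spotting the integration-by-parts trick. Two minor bookkeeping slips in your write-up: the factor $\vep$ in front of $\sum_i\dffun(\diff_0,X^\vep(t_i))[(\Delta W_i)(\Delta W_i)^T-\Delta I]\dffun^T(\diff_0,X^\vep(t_i))$ should not be there (it was already absorbed by $\vep N_iN_i^T$), and $\vep\cdot m\cdot(\Delta/\vep)^2 = \Delta/\vep$, not $\Delta^2/\vep$; neither affects the conclusion.
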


\begin{proof}
Notice that 
\begin{align*}
\mqd{X^\vep}^{D,\vr_\vep}_t \equiv&\ \sum_{i=1}^{m}\lf(X^\vep(t_i\wedge t) - X^\vep(t_{i-1}\wedge t)\ri)\lf(X^\vep(t_i\wedge t) - X^\vep(t_{i-1}\wedge t)\ri)^T\\
=&\ X^\vep(t)X^\vep(t)^T - X^\vep(0)X^\vep(0)^T - \int_0^t dX^\vep(s) X^\vep(\vr_\vep(s))^T - \int_0^t X^\vep(\vr_\vep(s))dX^\vep(s)^T.
\end{align*}
Hence, under $\PP_{\para_0}$, ($\para_0 = (\drft_0,\diff_0)$),
\begin{align*}
\mqd{X^\vep}^{D,\vr_\vep}_t =&\ X^\vep(t)X^\vep(t)^T - X^\vep(0)X^\vep(0)^T -2\Big(\vep^{-1}\int_0^t X^\vep(\vr_\vep(s)) b^T(\drft_0, X^\vep(s)) ds \\
& \ + \vep^{-1/2}\int_0^t X^\vep(\vr_\vep(s)) dW^T(s) \dffun^T(\diff_0, X^\vep(s))  \Big)_{\text{sym}}\\
=&\  X^\vep(t)X^\vep(t)^T - X^\vep(0)X^\vep(0)^T -2\Big(\vep^{-1}\int_0^t X^\vep(s) b^T(\drft_0, X^\vep(s)) ds \\
& \ + \vep^{-1/2}\int_0^t X^\vep(s) dW^T(s)\dffun^T(\diff_0, X^\vep(s)) \Big)_{\text{sym}}+2\vep^{-1}\lf(R^\vep_1(t)\ri)_{\text{sym}}+2\vep^{-1/2}\lf(R^\vep_2(t)\ri)_{\text{sym}}\\
=&\ \mqd{X^\vep}_t+ +2\vep^{-1}\lf(R^\vep_1(t)\ri)_{\text{sym}}+2\vep^{-1/2}\lf(R^\vep_2(t)\ri)_{\text{sym}}\\
=&\ \vep^{-1} \int_0^t  a(\diff_0,X^\vep(s))ds+2\vep^{-1}\lf(R^\vep_1(t)\ri)_{\text{sym}}+2\vep^{-1/2}\lf(R^\vep_2(t)\ri)_{\text{sym}}\\
=&\ \vep^{-1} \int_0^t  a(\diff_0,X^\vep(\vr_\vep(s)))ds+\vep^{-1} R^\vep_0(t) + +2\vep^{-1}\lf(R^\vep_1(t)\ri)_{\text{sym}}+2\vep^{-1/2}\lf(R^\vep_2(t)\ri)_{\text{sym}}.
\end{align*}
Here 
\begin{align*}
R^\vep_0(t)=&\ \int_0^t  a(\diff_0,X^\vep(s)) - a(\diff_0, X^\vep(\vr_\vep(s))) ds, \\
R^\vep_1(t)=&\ \int_0^t \lf(X^\vep(s) - X^\vep(\vr_\vep(s))\ri) b^T(\drft_0, X^\vep(s)) ds,\\
R^\vep_2(t)=&\  \int_0^t \lf(X^\vep(s) - X^\vep(\vr_\vep(s))\ri) dW^T(s) \dffun(\diff_0,X^\vep(s)),
\end{align*}
and recall for a square matrix $A$, $A_{\text{sym}} = (A+A^T)/2$.
Thus
\begin{align}\label{eq:diff-err}
\vep \mqd{X^\vep}^{D,\vr_\vep}_t -\int_0^t a(\diff_0,X^\vep(\vr_\vep(s)) = \lf(R^\vep_0(t)+2\lf(R^\vep_1(t)\ri)_{\text{sym}}+2\vep^{1/2}\lf(R^\vep_2(t)\ri)_{\text{sym}}\ri).
\end{align}

\np
Notice that by Proposition Corollary \ref{cor:diff-est-1} (also see Lemma \ref{lem:fun-diff-est-2}), for some constant $\const_{\ref*{lem:diff-const},0} \geq 0$,  
\begin{align*}
\EE_{\para_0}\lf[\sup_{0\leq t\leq 1}\|R^\vep_0(t)\|^2\ri] \leq &\ \const_{\ref*{lem:diff-const},0} (\Delta(\vep)/\vep)^{\holex_\dffun} \rt 0,\quad 
\EE_{\para_0}\lf[\sup_{0\leq t\leq 1}\|R^\vep_1(t)\|^2\ri]  \leq \ \const_{\ref*{lem:diff-const},0} (\Delta(\vep)/\vep) \ert 0,
\end{align*}
and finally, an additional use of BDG inequality shows that for some constant $\const_{\ref*{lem:diff-const},1} \geq 0$
\begin{align*}
\vep\EE_{\para_0}\lf[\sup_{0\leq t\leq 1}\|R^\vep_2(t)\|^2\ri] \leq \vep \dibd^2(\diff_0)\|\EE_{\para_0}\int_0^1\|X^\vep(s) - X^\vep(\vr_\vep(s)\|^2 V(X^\vep(s))^{2\sexp_0}\ ds \leq \const_{\ref*{lem:diff-const},1} \Delta(\vep) \ert 0.
\end{align*}
\end{proof}

\begin{proof}[{\bf Proof of Theorem \ref{th:const-diff-0}}] For the proof we operate in the scaling regime introduced in Section \ref{sec:new-scaling}. As mentioned at the end of Section \ref{sec:new-scaling}, proving Theorem \ref{th:const-diff-0} is equivalent to showing that $\hat \diff^\vep \equiv \hat\diff^\vep(\BX^\vep_{ t_0: t_m}) \stackrel{\PP_{\theta_0}} \Rt \diff_0$ as $\vep \rt 0$ under  Condition \ref{cond:LLN-diff} of that theorem and the assumption that $\Delta(\vep)/\vep \ert 0$.

We first consider the case when $\diff$ is of Form 1 (see Section \ref{sec:diff-est}), and for specificity we assume $\dffun(\diff,x) = \lf(a_0(x)\diff\ri)^{1/2}$ (that is, $a(\diff,x) \equiv \dffun(\diff,x)\dffun^T(\diff,x) = a_0(x)\diff$), in which case $\hat \diff^\vep \equiv \hat\diff^\vep(\BX^\vep_{ t_0: t_m})$ is given by
\eqref{eq:diff-est-1-sc}.  We have
\begin{align}\label{eq:diff-diff-est}
\hat \diff^\vep -\diff_0 = \lf(\int_0^1 a_0(X^\vep(\vr_\vep(s))\ ds\ri)^{-1}\lf(\vep \mqd{X^\vep}^{D,\vr_\vep}_1 -\int_0^1 a_0(X^\vep(\vr_\vep(s))) \diff_0\ ds\ri),
\end{align}
and the result follows from Lemma \ref{lem:diff-const} and the observation that by Proposition \ref{prop:diff-est}-\ref{item:diff-est:conv-1}
\begin{align} \label{eq:diff-erg}
\int_0^1 a_0(X^\vep(\vr_\vep(s))\ ds \prt \int a_0(x) \pi^{st}_{\para_0}(dx).
\end{align} 

When $\diff$ is of Form 2 notice that by the property of $\ve_{d\times d}$ operator
\begin{align}\label{eq:diff-diff-est-2}
\begin{aligned}
    \ve_{d\times d}(\hat\diff^\vep-\diff_0) =&\ \ve_{d\times d}(\hat\diff^\vep)-\ve_{d\times d}(\diff_0) = \ \lf(\int_0^{1} \dffun_0(X^\vep(\vr_\vep(s))) \ot \dffun_0(X^\vep(\vr_\vep(s)))ds\ri)^{-1}\\
    & \hs{.3cm} \lf(\ve_{d\times d}(\vep \mqd{X^\vep}^{D,\vr_\vep}_{1}) - \int_0^{1} \dffun_0(X^\vep(\vr_\vep(s))) \ot \dffun_0(X^\vep(\vr_\vep(s)))ds\ \ve_{d\times d}(\diff_0)\ri)\\
    =&\ \lf(\int_0^{1} \dffun_0(X^\vep(\vr_\vep(s))) \ot  \dffun_0(X^\vep(\vr_\vep(s)))ds\ri)^{-1}\\
    & \hs{.3cm} \ve_{d\times d}\lf(\vep \mqd{X^\vep}^{D,\vr_\vep}_{1} - \int_0^1 \dffun_0(X^\vep(\vr_\vep(s)) \diff_0 \dffun^T_0(X^\vep(\vr_\vep(s))\ ds\ri)\\
    =&\ \lf(\int_0^{1} \dffun_0(X^\vep(\vr_\vep(s))) \ot  \dffun_0(X^\vep(\vr_\vep(s)))ds\ri)^{-1}\\
    & \hs{.3cm} \ve_{d\times d}\lf(\vep \mqd{X^\vep}^{D,\vr_\vep}_{1} - \int_0^1 a(\diff_0,X^\vep(\vr_\vep(s)) \ ds\ri).
    \end{aligned}
\end{align}
Again the result follows from Lemma \ref{lem:diff-const} and the observation that by Proposition \ref{prop:diff-est}-\ref{item:diff-est:conv-1}
\begin{align}  \label{eq:diff-erg-2}
\int_0^1 \dffun_0(X^\vep(\vr_\vep(s))) \ot  \dffun_0(X^\vep(\vr_\vep(s)))\ ds \prt \int \dffun_0(x) \ot  \dffun_0(x) \pi^{st}_{\para_0}(dx).
\end{align} 
\end{proof}

\begin{proposition} \label{prop:clt-diff-err}
Suppose that Condition \ref{cond:CLT-diff} of Theorem \ref{th:clt-diff-0} hold, and $\dst \f{(\Delta(\vep)/\vep)^{\holex_b \wedge \holex'_a}}{\vep} = \Delta(\vep)^{\holex_b \wedge \holex'_a}/\vep^{1+\holex_b \wedge \holex'_a} \ert 0$. Let $\zeta$ be the $\R^{d\times d}$-valued random variable defined in Theorem \ref{th:clt-diff-0}. Then  as $\vep \rt 0$  $$\Delta(\vep)^{-1/2}\lf(\vep \mqd{X^\vep}^{D,\vr_\vep}_1 -\int_0^1 a(\diff_0,X^\vep(\vr_\vep(s))\ri) \RT 2(\zeta)_{\mathrm{sym}}.$$
\end{proposition}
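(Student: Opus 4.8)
The natural starting point is the exact decomposition already produced in the proof of Lemma~\ref{lem:diff-const}: by \eqref{eq:diff-err},
\begin{align*}
\vep \mqd{X^\vep}^{D,\vr_\vep}_1 -\int_0^1 a(\diff_0,X^\vep(\vr_\vep(s)))\,ds = R^\vep_0(1)+2\big(R^\vep_1(1)\big)_{\mathrm{sym}}+2\vep^{1/2}\big(R^\vep_2(1)\big)_{\mathrm{sym}},
\end{align*}
so after multiplying through by $\Delta(\vep)^{-1/2}$ I must show that the first two terms vanish and the third converges to $2(\zeta)_{\mathrm{sym}}$. For $R^\vep_0(1)=\int_0^1\big(a(\diff_0,X^\vep(s))-a(\diff_0,X^\vep(\vr_\vep(s)))\big)\,ds$ I would invoke Lemma~\ref{lem:fun-diff-est-2} with $\phi\equiv I$ and $\psi=a(\diff_0,\cdot)$ --- this is exactly why Condition~\ref{cond:CLT-diff} carries \ref{cond:sig-growth}-\hyperlink{subitem-s-d}{(d)} and \ref{cond:sig-growth}-\hyperlink{subitem-s-e}{(e)} rather than merely the Hölder bound used for consistency --- which gives $\EE_{\para_0}\|R^\vep_0(1)\|^2\le\const(\Delta(\vep)/\vep)^{1+\holex'_a\wedge\holex_\dffun}$. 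For $R^\vep_1(1)$ I would write $b(\drft_0,X^\vep(s))=b(\drft_0,X^\vep(\vr_\vep(s)))+\big(b(\drft_0,X^\vep(s))-b(\drft_0,X^\vep(\vr_\vep(s)))\big)$, bound the first piece by Lemma~\ref{lem:fun-diff-est-2} (with $\psi(x)=x$, $\phi=b(\drft_0,\cdot)$) and the second by the Cauchy--Schwarz inequality together with Remark~\ref{rem:x-diff-est} and Corollary~\ref{cor:diff-est-1}. In all cases the $\Delta(\vep)^{-1/2}$-rescaled $L^2$-norms are of the type $\const\,\Delta(\vep)^{1/2}(\Delta(\vep)/\vep)^{-1}(\Delta(\vep)/\vep)^{(1+\holex_b\wedge\holex'_a)/2}$, which tend to $0$ under the step-size hypothesis $\Delta(\vep)^{\holex_b\wedge\holex'_a}/\vep^{1+\holex_b\wedge\holex'_a}\ert0$ (which also forces $\Delta(\vep)/\vep\ert0$).

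The surviving term $\Delta(\vep)^{-1/2}\vep^{1/2}R^\vep_2(1)$, with $R^\vep_2(1)=\int_0^1\big(X^\vep(s)-X^\vep(\vr_\vep(s))\big)\,dW^T(s)\,\dffun(\diff_0,X^\vep(s))$, is the heart of the matter. I would reduce it, through three successive replacements, to the frozen object
\begin{align*}
N^\vep(1)\dfeq\int_0^1 \dffun(\diff_0,X^\vep(\vr_\vep(s)))\big(W(s)-W(\vr_\vep(s))\big)\,dW^T(s)\,\dffun^T(\diff_0,X^\vep(\vr_\vep(s))):
\end{align*}
(i) replace $\dffun(\diff_0,X^\vep(s))$ by $\dffun(\diff_0,X^\vep(\vr_\vep(s)))$; (ii) replace $X^\vep(s)-X^\vep(\vr_\vep(s))$ by its diffusion part $\vep^{-1/2}\int_{\vr_\vep(s)}^s\dffun(\diff_0,X^\vep(r))\,dW(r)$, dropping the $\vep^{-1}$-drift part; (iii) replace $\dffun(\diff_0,X^\vep(r))$ in that integral by $\dffun(\diff_0,X^\vep(\vr_\vep(r)))=\dffun(\diff_0,X^\vep(\vr_\vep(s)))$, which is constant on each observation interval. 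Each error is a martingale whose $L^2$-norm is estimated with the Burkholder--Davis--Gundy inequality combined with Lemma~\ref{lem:fun-diff-est}, Remark~\ref{rem:x-diff-est}, Lemma~\ref{lem:disc-mart} and Corollary~\ref{cor:int-bd-V}, and is $o(\Delta(\vep)^{1/2})$ under the step-size hypothesis, so $\Delta(\vep)^{-1/2}\vep^{1/2}R^\vep_2(1)=\Delta(\vep)^{-1/2}N^\vep(1)+o(1)$ in $L^2(\PP_{\para_0})$. The value of $N^\vep(1)$ is that on each $[t_k,t_{k+1}]$ the diffusion matrix is frozen at $X^\vep(t_k)$ and It\^o's formula gives $\big(\int_{t_k}^{t_{k+1}}(W(s)-W(t_k))\,dW^T(s)\big)_{\mathrm{sym}}=\tfrac12\big(\Delta_kW\,(\Delta_kW)^T-\Delta(\vep)I\big)$ with $\Delta_kW\dfeq W(t_{k+1})-W(t_k)$; since $(\dffun M\dffun^T)_{\mathrm{sym}}=\dffun(M)_{\mathrm{sym}}\dffun^T$ and $\dffun\dffun^T=a$ this yields the exact identity
\begin{align*}
2\big(N^\vep(1)\big)_{\mathrm{sym}}=\sum_{k=0}^{m-1}\Big(\dffun(\diff_0,X^\vep(t_k))\,\Delta_kW\,(\Delta_kW)^T\,\dffun^T(\diff_0,X^\vep(t_k))-\Delta(\vep)\,a(\diff_0,X^\vep(t_k))\Big)\dfeq\mart^\vep_m,
\end{align*}
an $\R^{d\times d}_{\mathrm{sym}}$-valued discrete-time martingale, since the $\SC{F}_{t_k}$-conditional mean of each summand is $0$.

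It then remains to apply the martingale central limit theorem to $\Delta(\vep)^{-1/2}\ve_{d\times d}(\mart^\vep_m)$. Writing $K$ for the $d^2\times d^2$ commutation matrix ($K\ve_{d\times d}(A)=\ve_{d\times d}(A^T)$), the identity $\mathrm{Var}(\ve_{d\times d}(ZZ^T))=(I+K)(\Sigma\ot\Sigma)$ for $Z\sim\No_d(0,\Sigma)$ shows that the predictable quadratic variation of $\ve_{d\times d}(\mart^\vep)$ is $\Delta(\vep)^2\sum_{k=0}^{m-1}(I+K)\big(a(\diff_0,X^\vep(t_k))\ot a(\diff_0,X^\vep(t_k))\big)$, whence
\begin{align*}
\Delta(\vep)^{-1}\big\langle\ve_{d\times d}(\mart^\vep)\big\rangle_m=(I+K)\int_0^1 a(\diff_0,X^\vep(\vr_\vep(s)))\ot a(\diff_0,X^\vep(\vr_\vep(s)))\,ds\ \prt\ (I+K)\,\acov_{\para_0},
\end{align*}
with $\acov_{\para_0}\dfeq\int_{\R^d}a(\diff_0,x)\ot a(\diff_0,x)\,\inv_{\para_0}(dx)$, by Proposition~\ref{prop:diff-est}-\ref{item:diff-est:conv-1} applied with the constant function in the $\psi$-slot, together with the uniform integrability of Corollary~\ref{cor:int-bd-V}. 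The conditional Lindeberg condition holds because each summand has $\SC{F}_{t_k}$-conditional fourth moment of order $\Delta(\vep)^2V(X^\vep(t_k))^{q}$ and $\sup_{\vep}\EE_{\para_0}\int_0^1V(X^\vep(\vr_\vep(s)))^{q}\,ds<\infty$ by Corollary~\ref{cor:int-bd-V}. The martingale CLT (see \cite[Chapter~7]{EK86}) then gives that $\Delta(\vep)^{-1/2}\mart^\vep_m$ converges in distribution to a centered Gaussian on $\R^{d\times d}_{\mathrm{sym}}$ with covariance $(I+K)\acov_{\para_0}$. Finally, since $\ve_{d\times d}(\zeta)\sim\No_{d^2}(0,\tfrac12\acov_{\para_0})$ and $\ve_{d\times d}\big(2(\zeta)_{\mathrm{sym}}\big)=(I+K)\ve_{d\times d}(\zeta)$, the vectorized law of $2(\zeta)_{\mathrm{sym}}$ is $\No_{d^2}\big(0,\tfrac12(I+K)\acov_{\para_0}(I+K)\big)$, and $\tfrac12(I+K)\acov_{\para_0}(I+K)=(I+K)\acov_{\para_0}$ because symmetry of $a$ makes $a\ot a$, hence $\acov_{\para_0}$, commute with $K$ while $K^2=I$; thus the two centered Gaussians coincide. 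Combining with the previous two paragraphs gives $\Delta(\vep)^{-1/2}\big(\vep \mqd{X^\vep}^{D,\vr_\vep}_1-\int_0^1 a(\diff_0,X^\vep(\vr_\vep(s)))\,ds\big)\RT2(\zeta)_{\mathrm{sym}}$.

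\textbf{Main obstacle.} The delicate points are, first, the reduction of $\vep^{1/2}R^\vep_2(1)$ to $N^\vep(1)$ with all three remainders genuinely $o(\Delta(\vep)^{1/2})$ --- this needs the sharp $(\Delta/\vep)^{(1+\,\cdot\,)/2}$-rate estimates of Lemmas~\ref{lem:fun-diff-est}--\ref{lem:fun-diff-est-2} and Lemma~\ref{lem:disc-mart} (the crude $(\Delta/\vep)^{\,\cdot\,/2}$ bounds used for consistency do not suffice here) and careful bookkeeping of the Hölder and growth exponents against $\Delta(\vep)^{\holex_b\wedge\holex'_a}/\vep^{1+\holex_b\wedge\holex'_a}\ert0$ --- and, second, the quadratic-variation and Lindeberg verifications for $\mart^\vep_m$ together with the commutation-matrix algebra identifying the limiting Gaussian with $2(\zeta)_{\mathrm{sym}}$.
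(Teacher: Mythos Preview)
Your decomposition and the handling of the error terms $R^\vep_0$, $R^\vep_1$, together with the three-step reduction of $\vep^{1/2}R^\vep_2(1)$ to the frozen object $N^\vep(1)$, match the paper's proof essentially step for step (the paper calls $N^\vep$ by $M^\vep_2$ and packages your replacements (i)--(iii) as the drift remainder $R^\vep_{2,1}$ and the two-term freezing remainder $R^\vep_{2,2}$, in a slightly different order). The only genuine divergence is in the treatment of the surviving martingale. The paper keeps $M^\vep_2$ as a continuous-time matrix-valued martingale, computes its quadratic variation columnwise --- splitting each $\mqd{M^\vep_{2,*,j},M^\vep_{2,*,l}}$ into a rate piece $Q^\vep_{j,l}(t)=\int_0^t a_{j,l}\,a\,(s-\vr_\vep(s))\,ds$ and a discrete-martingale remainder $\tilde R^\vep_{2,j,l}$ --- and applies the continuous-time martingale CLT to obtain $\Delta(\vep)^{-1/2}\ve_{d\times d}(M^\vep_2(1))\RT\ve_{d\times d}(\zeta)$ directly, symmetrizing only at the very end via the outer $2(\cdot)_{\mathrm{sym}}$ in \eqref{eq:diff-err}. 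Your route instead symmetrizes first, using the It\^o identity on each sub-interval to rewrite $2(N^\vep(1))_{\mathrm{sym}}$ as an explicit discrete-time martingale sum $\mart^\vep_m$, and then invokes a discrete martingale CLT together with commutation-matrix algebra to identify the limiting covariance with that of $2(\zeta)_{\mathrm{sym}}$. Both arguments are correct; your It\^o rewriting makes the increments completely explicit, so the predictable-variation and Lindeberg verifications are transparent, while the paper's approach avoids the $K$-matrix bookkeeping at the end and actually yields the slightly stronger (unsymmetrized) conclusion $\Delta(\vep)^{-1/2}M^\vep_2(1)\RT\zeta$.
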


\begin{proof}
From \eqref{eq:diff-err},
\begin{align}\label{eq:diff-err-clt}
\Delta(\vep)^{-1/2}\lf(\vep \mqd{X^\vep}^{D,\vr_\vep}_t -\int_0^t a(\diff_0,X^\vep(\vr_\vep(s))\ri) = \Delta(\vep)^{-1/2}\lf( R^\vep_0(t)+2\lf(R^\vep_1(t)\ri)_{\text{sym}}+2\vep^{1/2}\lf(R^\vep_2(t)\ri)_{\text{sym}}\ri).
\end{align}

The proof of CLT needs a more subtle analysis of the error terms $R^\vep_l, l=0,1,2$. We first focus on the term $R^\vep_2$. Under $\PP_{\para_0}$
\begin{align*}
X^\vep(s) - X^\vep(\vr_\vep(s)) = \vep^{-1} \int_{\vr_\vep(s)}^s b(\drft_0, X^\vep(r))dr+\vep^{-1/2}\int_{\vr_\vep(s)}^s  \dffun(\diff_0,X^\vep(r))dW(r),
\end{align*}
and hence
\begin{align*}
\vep^{1/2} \Delta^{-1/2}(\vep)R^\vep_2(t)=&\  \lf(\vep/\Delta(\vep)\ri)^{1/2}\int \lf(X^\vep(s) - X^\vep(\vr_\vep(s))\ri) dW^T(s) \dffun(\diff_0,X^\vep(s))\\
=&\ \lf(\vep\Delta(\vep)\ri)^{-1/2}\int_0^t\lf(\int_{\vr_\vep(s)}^s b(\diff_0, X^\vep(r))dr\ri)dW^T(s) \dffun(\diff_0,X^\vep(s))\\
& \ + \Delta(\vep)^{-1/2} \int_0^t\lf(\int_{\vr_\vep(s)}^s \dffun(\diff_0,X^\vep(r))dW(r)\ri)dW^T(s) \dffun(\diff_0,X^\vep(s))\\
\equiv&\ \lf(\vep\Delta(\vep)\ri)^{-1/2} R^\vep_{2,1}(t)+\Delta(\vep)^{-1/2} R^\vep_{2,2}(t)+\Delta(\vep)^{-1/2} M^\vep_{2}(t).
\end{align*}
where 
\begin{align*}
R^\vep_{2,1}(t) \equiv &\ \int_0^t\lf(\int_{\vr_\vep(s)}^s b(\diff_0, X^\vep(r))dr\ri)dW^T(s) \dffun(\diff_0,X^\vep(s)),\\
R^\vep_{2,2}(t)=&\ \int_0^t\lf(\int_{\vr_\vep(s)}^s \Big(\dffun(\diff_0,X^\vep(r))-  \dffun(\diff_0, X^\vep(\vr_\vep(r)))\Big)dW(r)\ri)dW^T(s) \dffun(\diff_0,X^\vep(s))\\
&\ + \int_0^t\lf(\int_{\vr_\vep(s)}^s   \dffun(\diff_0,X^\vep(\vr_\vep(r)))dW(r)\ri)dW^T(s) \Big(\dffun(\diff_0,X^\vep(s)) -\dffun(\diff_0,X^\vep(\vr_\ep(s)))\Big),\\
M^\vep_{2}(t)=&\ \int_0^t\lf(\int_{\vr_\vep(s)}^s  \dffun(\diff_0,X^\vep(\vr_\vep(r))dW(r)\ri)dW^T(s) \dffun(\diff_0,X^\vep(\vr_\vep(s)))\\
=&\ \int_0^t\dffun(\diff_0,X^\vep(\vr_\vep(s)))\big(W(s) - W(\vr_\vep(s))\big)dW^T(s) \dffun(\diff_0,X^\vep(\vr_\vep(s))).
\end{align*}

\np
{\em Convergence of $M^\vep_2$:} Note that $M^\vep_2$ is a martingale, and its $j$-th column is given by
\begin{align*}
M^\vep_{2,*,j}(t)=&\ \int_0^t\dffun(\diff_0,X^\vep(\vr_\vep(s)))\big(W(s) - W(\vr_\vep(s))\big)dW^T(s) \dffun(\diff_0,X^\vep(\vr_\vep(s)))_{*,j}\\
=&\ \int_0^t\dffun(\diff_0,X^\vep(\vr_\vep(s)))\big(W(s) - W(\vr_\vep(s))\big)\dffun(\diff_0,X^\vep(\vr_\vep(s)))_{j,*}\ dW(s) 
\end{align*}
and the matrix covariation between its $j$-th and $k$-th columns is given by 
\begin{align*}
\mqd{M^\vep_{2,*,j}, M^\vep_{2,*,l}}_t =&\ \int_0^t\dffun(\diff_0,X^\vep(\vr_\vep(s)))\big(W(s) - W(\vr_\vep(s))\big)\dffun(\diff_0,X^\vep(\vr_\vep(s)))_{j.*}\\
& \ \Big(\dffun(\diff_0,X^\vep(\vr_\vep(s)))\big(W(s) - W(\vr_\vep(s))\big)\dffun(\diff_0,X^\vep(\vr_\vep(s)))_{l,*}\Big)^T ds\\
=&\ \int_0^t\dffun(\diff_0,X^\vep(\vr_\vep(s)))\big(W(s) - W(\vr_\vep(s))\big)\dffun(\diff_0,X^\vep(\vr_\vep(s)))_{j,*}\\
& \ \dffun(\diff_0,X^\vep(\vr_\vep(s)))_{*,l}\big(W(s) - W(\vr_\vep(s))\big)^T\dffun(\diff_0,X^\vep(\vr_\vep(s))) ds\\
=&\ \int_0^t a(\diff_0,X^\vep(\vr_\vep(s)))_{j,l} \dffun(\diff_0,X^\vep(\vr_\vep(s)))\big(W(s) - W(\vr_\vep(s))\big)\\
& \ \big(W(s) - W(\vr_\vep(s))\big)^T\dffun(\diff_0,X^\vep(\vr_\vep(s))) ds
\equiv Q^\vep_{j,l}(t)+ \tilde R^\vep_{2,j,l}(t),
\end{align*}
where
\begin{align*}
Q^\vep_{j,l}(t)\equiv &\ \int_0^t a(\diff_0,X^\vep(\vr_\vep(s)))_{j,l} a(\diff_0,X^\vep(\vr_\vep(s))) (s-\vr_\vep(s)) \ ds,\\
\tilde R^\vep_{2,j,l}(t) \equiv  &\ \int_0^t a(\diff_0,X^\vep(\vr_\vep(s)))_{j,l} \dffun(\diff_0,X^\vep(\vr_\vep(s)))
 \Big\{\big(W(s) - W(\vr_\vep(s))\big)
\big(W(s) - W(\vr_\vep(s))\big)^T\\
&\ -(s-\vr_\vep(s))I\Big\}\dffun(\diff_0,X^\vep(\vr_\vep(s))) ds \equiv  \ [M^\vep_{2,j}, M^\vep_{2,l}]^{\ot}_t - \SC{B}^\vep_{j,l}(t).
\end{align*}
Notice that
\begin{align*}
Q^\vep_{j,l}(1)= &\ \sum_{k=0}^{m-1}\lf(a(\diff_0,X^\vep(t_k)\ri)_{j,l} a(\diff_0,X^\vep(t_k))\int_{t_k}^{t_{k+1}}(s-t_k)ds\\
=&\ \sum_{k=0}^{m-1}\lf(a(\diff_0,X^\vep(t_k)\ri)_{j,l} a(\diff_0,X^\vep(t_k))\Delta(\vep)^2/2\\
=&\ \f{\Delta(\vep)}{2}\int_0^1\lf(a(\diff_0,X^\vep(\vr_\vep(s))\ri)_{j,l} a(\diff_0,X^\vep(\vr_\vep(s)))ds.
\end{align*}
It follows by Proposition \ref{prop:diff-est} that as $\vep \rt 0$
\begin{align*}
Q^\vep_{j,l}(1)/\Delta(\vep) \prt \f{1}{2} \int \lf(a(\diff_0,x)\ri)_{j,l} a(\diff_0,x) \pi^{st}_{\para_0}(dx).
\end{align*}
Next, observe that the process $\{\tilde R^\vep_{2,j,l}(t_k):k=0,1,2,\hdots,m\}$ is a discrete-time martingale. 
Consequently, by Cauchy-Schwarz inequality
\begin{align*}
\EE_{\para_0}[\|\tilde R^\vep_{2,j,l}(1)\|^2] =&\ \sum_{k=0}^{m-1}\EE_{\para_0} \lf\|\int_{t_k}^{t_{k+1}}a(\diff_0,X^\vep(t_k))_{j,l}\dffun(\diff_0,X^\vep(t_k))
 \Big\{\big(W(s) - W(t_k)\big)
\big(W(s) - W(t_k)\big)^T\ri.\\
&\ \lf.-(s-t_k)I\Big\}\dffun(\diff_0,X^\vep(t_k) ds\ri\|^2\\
 &\leq  \Delta(\vep)\sum_{k=0}^{m-1} |\lf(a(\diff_0,X^\vep(t_k)\ri)_{j,l}|\lf\|a(\diff_0,X^\vep(t_k))\ri\|\\
 & \ \times \int_{t_k}^{t_{k+1}}\EE_{\para_0} \lf\|\big(W(s) - W(t_k)\big)
 \big(W(s) - W(t_k)\big)^T-(s-t_k)I\ri\|^2ds\\
 \leq&\ \const_{\ref*{prop:clt-diff-err},0} \Delta(\vep)^4 \sum_{k=0}^{m-1} |\lf(a(\diff_0,X^\vep(t_k))\ri)_{j,l}|\lf\|a(\diff_0,X^\vep(t_k))\ri\|\\
 =&\ \const_{\ref*{prop:clt-diff-err},0} \Delta(\vep)^3 \int_0^1 |\lf(a(\diff_0,X^\vep(\vr_\vep(s)))\ri)_{j,l}|\lf\|a(\diff_0,X^\vep(\vr_\vep(s)))\ri\|\ ds.
 \end{align*}
It follows that
\begin{align*}
\Delta(\vep)^{-2}\EE_{\para_0}[\|\tilde R^\vep_{2,j,l}(1)\|^2] \ert 0.
\end{align*}
Hence 
\begin{align*}
\Delta(\vep)^{-1}\mqd{M^\vep_{2,*,j}, M^\vep_{2,*,l}}_1 \prt \f{1}{2} \int \lf(a(\diff_0,x)\ri)_{j,l} a(\diff_0,x) \pi^{st}_{\para_0}(dx),
\end{align*}
that is,
$$\Delta(\vep)^{-1}\mqd{\ve(M^\vep_{2})}_1\prt  \f{1}{2} \int a(\diff_0,x) \ot a(\diff_0,x)\pi^{st}_{\para_0}(dx).$$
By Martingale CLT, as $\vep \rt 0$
\begin{align*}
\Delta(\vep)^{-1/2}\ve(M^\vep_{2}(1)) \RT \lf(\f{1}{2}\int a(\diff_0,x) \ot a(\diff_0,x) \pi^{st}_{\para_0}(dx)\ri)^{1/2} \No_{d^2}(0,I).
\end{align*}

\np
{\em Convergence of $\lf(\vep\Delta(\vep)\ri)^{-1/2} R^\vep_{2,1}$:}
Notice that by BDG inequality and Lemma \ref{lem:simp-int-bd},
\begin{align*}
\EE_{\para_0}\lf(\|R^\vep_{2,1}(t)\|^2\ri)\leq &\ \EE_{\para_0}\int_0^t\lf\|\int_{\vr_\vep(s)}^s b(\diff_0, X^\vep(r))dr\ri\|^2\|a(\diff_0,X^\vep(s))\|\ ds\\
\leq & \ \lf(\int_0^t\EE_{\para_0}\lf\|\int_{\vr_\vep(s)}^s b(\diff_0, X^\vep(r))dr\ri\|^4 ds\ri)^{1/2}\lf(\EE_{\para_0} \int_0^t\|a(\diff_0,X^\vep(s))\|^2\ ds\ri)^{1/2}\\
\leq &\ \const_{\ref*{prop:clt-diff-err},1}\Delta(\vep)^{3/2}\lf(\int_0^t\EE_{\para_0}\int_{\vr_\vep(s)}^s \|b(\diff_0, X^\vep(r))\|^4\ dr\ ds\ri)^{1/2}\\
\leq &\ \const_{\ref*{prop:clt-diff-err},1}\Delta(\vep)^{2}\lf(\EE_{\para_0}\int_0^t \|b(\diff_0, X^\vep(r))\|^4\ dr\ri)^{1/2} \leq \const_{\ref*{prop:clt-diff-err},2} \Delta(\vep)^{2}.
\end{align*}
It follows that $\lf(\vep\Delta(\vep)\ri)^{-1} \EE_{\para_0}\lf(\|R^\vep_{2,1}(1)\|^2\ri)\leq \const_{\ref*{prop:clt-diff-err},2}\Delta(\vep)/\vep \ert 0.$

\np
{\em Convergence of $\Delta(\vep)^{-1/2} R^\vep_{2,2}$:}
Similar steps using a combination of Burkholder and Cauchy-Schwarz inequality and Corollary \ref{cor:diff-est-1}-\ref{item:diff-est:diff-1} show that for some constant $\const_{\ref*{prop:clt-diff-err},3}$
\begin{align*}
\Delta(\vep)^{-1}\EE_{\para_0}\lf(\|R^\vep_{2,2}(1)\|^2\ri) \leq \const_{\ref*{prop:clt-diff-err},3} \Delta(\vep)^{-1}\Delta(\vep)\lf(\Delta(\vep)/\vep\ri)^{\holex_\dffun} = \const_{\ref*{prop:clt-diff-err},3} \lf(\Delta(\vep)/\vep\ri)^{\holex_\dffun} \ert 0.
\end{align*}
Thus as $\vep \rt 0$, 
\begin{align*}
\vep^{1/2}\Delta(\vep)^{-1/2}\ve(R^\vep_{2}(1)) \RT \lf(\f{1}{2}\int a(\diff_0,x) \ot a(\diff_0,x) \pi^{st}_{\para_0}(dx)\ri)^{1/2} \No_{d^2}(0,I).
\end{align*}

Finally we consider the terms involving $R^\vep_0$ and $R^\vep_1$ in \eqref{eq:diff-err-clt}, and complete the proof by showing they converge to zero.

\np
{\em Convergence of $ \Delta^{-1/2} R^\vep_{1}$ and $ \Delta^{-1/2} R^\vep_{0}$ :}
The estimates for both these quantities follow from Lemma \ref{lem:fun-diff-est-2} which shows that for some constant $\const_{\ref*{prop:clt-diff-err},4}$
\begin{align*}
 \Delta(\vep)^{-1} \EE_{\para_0}\lf[\|R^\vep_{1}(1)\|^2\ri] \leq &\ \const_{\ref*{prop:clt-diff-err},4}\Delta(\vep)^{-1} (\Delta(\vep)/\vep)^{1+\holex_b} = \const_{\ref*{prop:clt-diff-err},4} \Delta(\vep)^{\holex_b}/\vep^{1+\holex_b}\\
 \Delta(\vep)^{-1}\EE_{\para_0}\lf[\|R^\vep_{0}(1)\|^2\ri]  \leq &\  \const_{\ref*{prop:clt-diff-err},4}\Delta(\vep)^{-1} (\Delta(\vep)/\vep)^{1+\holex'_a} = \const_{\ref*{prop:clt-diff-err},4} \Delta(\vep)^{\holex'_a}/\vep^{1+\holex'_a}
  \ert 0.
\end{align*}
\end{proof}

\begin{proof}[Proof of Theorem \ref{th:clt-diff-0}]
As before, we operate in the scaling regime introduced in Section \ref{sec:new-scaling}. As mentioned at the end of Section \ref{sec:new-scaling}, proving Theorem \ref{th:clt-diff-0} is equivalent to showing that the limit of $\Delta(\vep)^{-1/2}(\hat \diff^\vep -\diff_0)$ described in \ref{item:clt-diff-form1} and \ref{item:clt-diff-form2} hold for $\hat \diff^\vep \equiv \hat \diff^\vep(\BX^\vep_{t_0:t_m})$  under  Condition \ref{cond:CLT-diff} of that theorem and the assumption that $\dst\f{(\Delta(\vep)/\vep)^{\holex_b \wedge \holex'_a}}{\vep} = \Delta(\vep)^{\holex_b \wedge \holex'_a}/\vep^{1+\holex_b \wedge \holex'_a} \ert 0$.

Now observe that 
\ref{item:clt-diff-form1} readily follows from \eqref{eq:diff-diff-est},   \eqref{eq:diff-erg} and Proposition \ref{prop:clt-diff-err}, while
\ref{item:clt-diff-form2} follows from \eqref{eq:diff-diff-est-2}, \eqref{eq:diff-erg-2} and Proposition \ref{prop:clt-diff-err}. 
\end{proof}

\setcounter{section}{0}
\setcounter{theorem}{0}
\setcounter{equation}{0}
\renewcommand{\theequation}{\thesection.\arabic{equation}}

\appendix

\section{} \label{sec:appendix}

\begin{lemma}\label{lem:simp-int-bd}
    Let $\{t_k\}$ be a partition of $[0,T]$ with corresponding step function $\vr$ defined by  $\vr(s) =  t_{k},$ if $ t_k \leq s < t_{k+1}$. Then for any integrable function $h: [0,\infty) \rt [0,\infty)$,
    \begin{align*}
        \int_0^T\int_{\vr(s)}^s h(r)\ dr \leq \Delta \int_0^T h(r)\ dr.
    \end{align*}
\end{lemma}

\begin{proof}
   This easily follows from a change in the order of integration:
   	\begin{align*}
		\int_0^T \int_{\vr(s)}^sh(r)dr\ ds = &\ \sum_k \int_{ t_k}^{ t_{k+1}}\int_{ t_k}^sh(r)dr\ ds
		= \ \sum_k \int_{ t_k}^{ t_{k+1}}\int_{r}^{ t_{k+1}}h(r)ds\ dr\\ 
		\leq &\  \Delta \sum_k \int_{ t_k}^{ t_{k+1}}h(r)dr =   \Delta \int_{0}^{T}|h(r)| dr. 
	 \end{align*}
\end{proof}

The following lemma provides a slight generalization of the convergence of integrals with respect to probability measures under a uniform integrability type condition. The proof is included for completeness.
\begin{lemma}\label{lem:uni-int}
	Let $\{\mu_n\}$ a sequence of probability measures on $\R^d$ and  $\mu_n \RT \mu_0$ as $n\rt \infty$. Let $ f: \R^d \rt [0,\infty)$ be a lower semicontinuous function such that
	\begin{align}\label{assum-1}
		K_0 \doteq \sup_n \int_{\R^d} f(x)\ \mu_n(dx) <\infty.
	\end{align}
	Suppose that $h:\Theta\times \R^d \rt \R^{d_1}$  is a function satisfying the following conditions:
	\begin{enumerate}[label=(\roman*), ref=(\roman*)]
		\item if $\theta_n \rt \theta_0$, then $h(\theta_n,\cdot) \rt h(\theta_0,\cdot)$ uniformly on compact sets of $\R^d$ as $n\rt \infty$; specifically, for any compact sets $\SC{K} \subset \R^d$, $\dst \sup_{x \in \SC{K}}\|h(\theta_n,x) -h(\theta_0,x)\|  \stackrel{n\rt \infty}\rt  0$ 
		
		\item $\|h(\theta,x))\| \leq J(\theta)U(x)$ for some locally bounded function $J:\Theta \rt [0,\infty)$ and $U :\R^d \rt [0,\infty)$  satisfying $U(x) / f(x) \rt 0$ as $\|x\| \rt \infty$.
	\end{enumerate}
	Let $\{\theta_n\}$ be a sequence such that $\theta_n \stackrel{n\rt \infty}\rt \theta_0$. Then as $n \rt \infty$,
	\begin{align*}
		\int_{\R^d} h(\theta_n,x) \mu_n(dx) \rt \int_{\R^d} h(\theta_0,x) \mu(dx).
	\end{align*}	
\end{lemma}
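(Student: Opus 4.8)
\textbf{Proof proposal for Lemma \ref{lem:uni-int}.} The plan is to combine a truncation argument with the weak convergence $\mu_n \Rt \mu_0$. First I would fix $\epsilon>0$ and use hypothesis (ii) together with the uniform bound \eqref{assum-1} to control the tails. Since $U(x)/f(x)\rt 0$ as $\|x\|\rt\infty$, for any $\delta>0$ there is a radius $R$ such that $U(x)\leq \delta f(x)$ whenever $\|x\|\geq R$; because $\theta_n\rt\theta_0$, the sequence $\{\theta_n\}$ lies in a compact set, so $\sup_n J(\theta_n)\leq J^*<\infty$ by local boundedness of $J$. Hence $\int_{\|x\|\geq R}\|h(\theta_n,x)\|\,\mu_n(dx)\leq J^*\delta\int \|x\|\geq R f(x)\,\mu_n(dx)\leq J^*\delta K_0$, and similarly $\int_{\|x\|\geq R}\|h(\theta_0,x)\|\,\mu_0(dx)\leq J^*\delta K_0$ (using that $\mu_0$ also satisfies \eqref{assum-1}, which follows from lower semicontinuity of $f$ and the portmanteau theorem: $\int f\,d\mu_0 \leq \liminf_n \int f\,d\mu_n \leq K_0$). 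Enlarging $R$ if necessary, I can also arrange $\mu_0(\partial B_R)=0$ so $B_R$ is a $\mu_0$-continuity set.

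Next I would handle the bounded region $B_R=\{\|x\|\leq R\}$. Write
\begin{align*}
\int_{B_R} h(\theta_n,x)\,\mu_n(dx) - \int_{B_R} h(\theta_0,x)\,\mu_0(dx)
&= \int_{B_R}\big(h(\theta_n,x)-h(\theta_0,x)\big)\,\mu_n(dx) \\
&\quad + \left(\int_{B_R} h(\theta_0,x)\,\mu_n(dx) - \int_{B_R} h(\theta_0,x)\,\mu_0(dx)\right).
\end{align*}
The first term is bounded in norm by $\sup_{x\in B_R}\|h(\theta_n,x)-h(\theta_0,x)\|$, which tends to $0$ by hypothesis (i) since $B_R$ is compact. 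For the second term, $h(\theta_0,\cdot)$ restricted to $B_R$ is bounded (by $J(\theta_0)\sup_{B_R}U$), and $B_R$ is a $\mu_0$-continuity set, so one can approximate $h(\theta_0,\cdot)\mathbf 1_{B_R}$ by bounded continuous functions and invoke $\mu_n\Rt\mu_0$; alternatively, apply the standard fact that $\int g\,d\mu_n\rt\int g\,d\mu_0$ for bounded measurable $g$ whose discontinuity set is $\mu_0$-null. Either way this term vanishes as $n\rt\infty$.

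Combining the pieces: given $\epsilon>0$, choose $\delta$ with $2J^*\delta K_0 < \epsilon/2$, fix the corresponding $R$, then choose $N$ so that for $n\geq N$ both terms on the bounded region are $<\epsilon/4$ in norm; then $\|\int h(\theta_n,\cdot)\,d\mu_n - \int h(\theta_0,\cdot)\,d\mu_0\| < \epsilon$. The main obstacle I anticipate is being careful with the role of $\mu_0$ in \eqref{assum-1} — this is where lower semicontinuity of $f$ is essential, and it must be invoked via the portmanteau theorem rather than assumed — and ensuring the continuity-set choice of $R$ so that the weak-convergence step on $B_R$ is legitimate; everything else is a routine $\epsilon/3$-type splitting.
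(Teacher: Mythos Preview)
Your argument is correct and follows essentially the same truncation-plus-weak-convergence strategy as the paper: control the tails via $U(x)\leq \delta f(x)$ and the uniform bound $K_0$ (the paper likewise invokes lower semicontinuity of $f$ to get $\int f\,d\mu_0\leq K_0$), then handle the compact part using hypothesis (i) and weak convergence. The only cosmetic difference is that the paper organizes the split as $\int h(\theta_n,\cdot)\,d\mu_n=\int h(\theta_0,\cdot)\,d\mu_n+\mathcal R_n$ and finishes the $\int h(\theta_0,\cdot)\,d\mu_n\to\int h(\theta_0,\cdot)\,d\mu_0$ step via Skorohod representation plus uniform integrability rather than your continuity-set portmanteau argument; both routes tacitly use that $h(\theta_0,\cdot)$ is $\mu_0$-a.e.\ continuous in $x$, which is not listed among the hypotheses but holds in every application in the paper.
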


\begin{proof}
	First observe that from \eqref{assum-1}, by  lower semicontinuity of $f$ and a generalization of Fatou's lemma (see \cite[Theorem 1.1]{FKZ14}), we have
	$$ \int_{\R^d} f(x) \mu(dx) \leq K_0.$$
	Fix an $\vep>0$. Since $\{\mu_n\}$ is tight and $U(x) / f(x) \rt 0$ as $\|x\| \rt \infty$, there exists an $R_0>0$ such that 
	\begin{align*}
		\mu_n\{x \in \R^d: \|x\| > R_0\} \leq \vep, \quad \sup_{\|x\| > R_0} U(x)/f(x) \leq \vep.
	\end{align*}		
	Writing $\int_{\R^d} h(\theta_n,x) \mu_n(dx) = \int_{\R^d} h(\theta_0,x) \mu_n(dx) + \SC{R}_n$, we first show that the term
	$\SC{R}_n \equiv \int_{\R^d} \lf(h(\theta_n,x) - h(\theta_0,x)\ri) \mu_n(dx) \stackrel{n\rt \infty}\rt 0.$  To see this, notice that
	\begin{align*}
		|\SC{R}_n| \leq &\ \sup_{\|x\| \leq R_0}\|h(\theta_n,x) - h(\theta_0,x)\| + \int_{\|x\| >R_0} \lf(|h(\theta_n,x)| + |h(\theta_0,x)|\ri) \mu_n(dx)\\
		\leq &\ \sup_{\|x\| \leq R_0}\|h(\theta_n,x) - h(\theta_0,x)\|  + J(\theta_n) \vee J(\theta_0)  \int_{\|x\| >R_0} U(x) \mu_n (dx)\\
		\leq &\ \sup_{\|x\| \leq R_0}\|h(\theta_n,x) - h(\theta_0,x)\|  + \vep J(\theta_n) \vee J(\theta_0)  \int_{\|x\| >R_0} f(x) \mu_n (dx)\\
		\leq &\ \sup_{\|x\| \leq R_0}\|h(\theta_n,x) - h(\theta_0,x)\| + \vep J^* K_0 \stackrel{n\rt \infty}\rt \vep J^*K_0.
	\end{align*} 
 Here $J^* \dfeq J(\para_0) \vee \sup_n J(\para_n) < \infty$ since $J$ is locally bounded.
	Since $\vep$ is arbitrary, it follows that $\SC{R}_n \stackrel{n\rt \infty}\rt 0$. The convergence,  $\int_{\R^d} h(\theta_0,x) \mu_n(dx) \stackrel{n\rt \infty}\rt \int_{\R^d} h(\theta_0,x) \mu_0(dx)$, follows from standard result on uniform integrability. An easy way to see this is to invoke Skorohod representation theorem to get a probability space $(\tilde \Omega, \tilde{\SC{F}}, \tilde \PP)$, and random variables $\tilde X_0, \tilde X_n, n\geq 1$ on $(\tilde \Omega, \tilde{\SC{F}}, \tilde \PP)$ such that $\tilde X_n \sim \mu_n, \ \tilde X_0 \sim \mu_0$ and $\tilde X_n \rt \tilde X_0$ $\tilde \PP$-a.s. The conditions on $h$ then imply that 
	the sequence $\{h(\theta_0,\tilde X_n)\}$ is uniformly integrable.
\end{proof}

\begin{lemma}\label{lem:aux-conv}
	Let $\{H_n\}, \{Z_n\}$ be sequences of $d_0\times d_0$ and $d_0\times d_1$ real matrices. Let $U_n = H_nZ_n$. Suppose that as $n\rt \infty$, $H_n \rt H_0, \ U_n \rt U_0$, where $H_0$ is non-singular. Then  $Z_n \rt Z_0 = H_0^{-1}U_0$. 
\end{lemma}

\begin{proof} The main point here is to observe that $H_n$ will be non-singular for sufficiently large $n$.
	Indeed, since $\det: \R^{d_0\times d_0}$ is a continuous function and $\det(H_0) \neq 0$, there exists an open set $\mathbb{O} \subset \R^{d_0\times d_0}$ containing $H_0$, such that for any $M \in \mathbb{O}$, $\det(M) \neq 0$; that is, $M$ is non-singular. Since $H_n \rt H_0$, $H_n \in \mathbb{O}$ for all $n \geq N_0$. Thus for all $n \geq N_0$, $Z_n = H_n^{-1}U_n$. The rest now follows from the continuity of the mapping $A \in \operatorname{GL}(d_0,\R) \rt A^{-1} \in \operatorname{GL}(d_0,\R)$. Here $\operatorname{GL}(d_0,\R)$ is the space of all invertible $d_0\times d_0$ matrices.
\end{proof}

\bibliographystyle{plainnat}
\renewcommand{\bibfont}{\footnotesize} 
\bibliography{Ref-ML}
\end{document}